\documentclass[a4paper, reqno]{amsart}

\usepackage[T1]{fontenc}
\usepackage{tgadventor}

\usepackage{amsmath,amsthm,amssymb,relsize}
\usepackage[nobysame,alphabetic]{amsrefs}
\usepackage[margin=30mm]{geometry}
\usepackage{mathrsfs,mathtools,makecell}

\usepackage{booktabs, multirow, adjustbox, array}

\newcolumntype{R}[2]{%
    >{\adjustbox{angle=#1,lap=\width-(#2)}\bgroup}%
    l%
    <{\egroup}%
}

\usepackage{caption}


\usepackage{enumerate}
\usepackage{tikz}
\usetikzlibrary{
  cd,
  calc,
  positioning,
  fit,
  arrows,
  decorations.pathreplacing,
  decorations.markings,
  shapes.geometric,
  backgrounds,
  bending
}
\usepackage{tikzsymbols}
\PassOptionsToPackage{dvipsnames}{xcolor}
\definecolor{darkblue}{rgb}{0,0,0.6}

\usepackage[breaklinks, pdftex, ocgcolorlinks,colorlinks=true, citecolor=darkblue, filecolor=darkblue, linkcolor=darkblue, urlcolor=darkblue]{hyperref}
\usepackage[capitalize,noabbrev]{cleveref}

\makeatletter
\newtheorem*{rep@theorem}{\rep@title}
\newcommand{\newreptheorem}[2]{%
\newenvironment{rep#1}[1]{%
 \def\rep@title{#2 \ref{##1}}%
 \begin{rep@theorem}}%
 {\end{rep@theorem}}}
\makeatother

\newtheorem{proposition}{Proposition}[section]
\newtheorem{theorem}[proposition]{Theorem}
\newtheorem{corollary}[proposition]{Corollary}
\newtheorem{lemma}[proposition]{Lemma}

\theoremstyle{definition}
\newtheorem{definition}[proposition]{Definition}

\newtheorem{example}[proposition]{Example}

\theoremstyle{remark}
\newtheorem{remark}[proposition]{Remark}

\newtheorem*{remark*}{Remark}

\newreptheorem{theorem}{Theorem}
\newreptheorem{lemma}{Lemma}
\newreptheorem{proposition}{Proposition}
\newreptheorem{corollary}{Corollary}
\newreptheorem{question}{Question}
\numberwithin{equation}{section}

\newcommand{\lk}{\operatorname{lk}}

\newcommand{\Q}{\mathbb{Q}}
\newcommand{\N}{\mathbb{N}}

\newcommand{\R}{\mathbb{R}}
\newcommand{\Z}{\mathbb{Z}}

\newcommand{\A}{\mathcal{A}}

\newcommand{\im}{\operatorname{Im}}
\newcommand{\Id}{\operatorname{Id}}
\newcommand{\id}{\operatorname{Id}}

\newcommand{\Bl}{\mathcal{B}\ell}
\newcommand{\Qt}{{\Q[t^{\pm 1}]}}
\newcommand{\Zt}{{\Z[t^{\pm 1}]}}

\newcommand{\Ext}{\mathrm{Ext}}
\newcommand{\Hom}{\mathrm{Hom}}

\newcommand{\ol}{\overline}
\newcommand{\wt}{\widetilde}
\newcommand{\wh}{\widehat}
\newcommand{\sm}{\setminus}


\DeclareMathOperator{\Aut}{Aut}

\DeclareMathOperator{\grk}{g-rk}
\DeclareMathOperator{\coker}{coker}

\newcommand{\smfrac}[2]{\mbox{\footnotesize$\displaystyle\frac{#1}{#2}$}} 

\newcommand{\bsm}{\left(\begin{smallmatrix}}
\newcommand{\esm}{\end{smallmatrix}\right)}

\usepackage{letltxmacro}

\LetLtxMacro\Oldfootnote\footnote

\begin{document}
\title[Strongly invertible knots and equivariant slice genera]{Strongly invertible knots,  equivariant slice genera, \\ and an equivariant algebraic concordance group}

\author{Allison N.\ Miller}
\address{Department of Mathematics \& Statistics,  Swarthmore College, Swarthmore, PA USA}
\email{amille11@swarthmore.edu}

\author{Mark Powell}
\address{Department of Mathematical Sciences, Durham University, Upper Mountjoy,\newline \indent Stockton Road, Durham, United Kingdom, DH1 3LE}
\email{mark.a.powell@durham.ac.uk}


\def\subjclassname{\textup{2020} Mathematics Subject Classification}
\expandafter\let\csname subjclassname@1991\endcsname=\subjclassname
\subjclass{
57K10, 
57N35, 
57N70. 
}
\keywords{Strongly invertible knots, equivariant slice genus, Blanchfield form, equivariant algebraic concordance}

\begin{abstract}
We use the Blanchfield form to obtain a lower bound on the equivariant slice genus of a strongly invertible knot.  For our main application, let $K$ be a genus one strongly invertible slice knot with nontrivial Alexander polynomial. We show that the equivariant slice genus of an equivariant connected sum $\#^n K$ is at least $n/4$.

We also formulate an equivariant algebraic concordance group, and show that the kernel of the forgetful map to the classical algebraic concordance group is infinite rank. \end{abstract}

\maketitle

\section{Introduction}

Let $\gamma$ be a great circle in $S^3$, and let $\tau \colon S^3 \to S^3$ be the order two diffeomorphism given by the rotation with axis $\gamma$ through $\pi$ radians.
Let $K$ be a knot in $S^3$ that intersects $\gamma$ in precisely two points, and such that $\tau(K) = K$.  Then we say that $K$ is \emph{strongly invertible} with \emph{strong inversion}~$\tau$.   Note that $\tau|_K$ is necessarily orientation reversing.
Suppose that $K$ bounds a compact, oriented, locally flat surface $\Sigma$ of genus $g$ in $D^4$, such that for some extension of  $\tau$ to a locally linear involution $\wh{\tau} \colon D^4 \to D^4$, one has
that $\wh{\tau}(\Sigma) = \Sigma$.
The minimal such $g$ is called the (topological) \emph{equivariant 4-genus} or \emph{equivariant slice genus} of $(K,\tau)$, and denoted $\wt{g}_4(K,\tau)$.
A strongly invertible knot with $\wt{g}_4(K, \tau)=0$ is called \emph{equivariantly slice}.

\subsection{Lower bounds on the equivariant 4-genus}

By studying the Alexander module and the Blanchfield pairing, we derive a new lower bound for the equivariant $4$-genus, which we will explain below.  First,  we state our main application.

\begin{theorem}\label{thm:intro-arb-large}
Let $K$ be a genus one algebraically slice knot with nontrivial Alexander polynomial and strong inversion $\tau$.
Let $(K_n, \tau_n)$ be an equivariant connected sum of $n$ copies of $(K, \tau)$.
Then the equivariant 4-genus of $(K_n, \tau_n)$ is at least $n/4$. In particular, if $K$ is slice then $\wt{g}_4(K_n,\tau_n) - g_4(K_n) \to \infty$ as $n \to \infty$.
\end{theorem}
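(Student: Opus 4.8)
The plan is to extract the bound from an \emph{equivariant enhancement of the Blanchfield pairing}. The starting observation is that, because the strong inversion $\tau$ reverses the orientation of $K$, it sends the meridian $\mu$ to $-\mu$ and therefore covers an involution of the infinite cyclic cover $X_\infty$ acting on the rational Alexander module $V := H_1(X_\infty;\Q)$ as a $\Qt$-\emph{semilinear} involution $\tau_*$, intertwining multiplication by $t$ with multiplication by $t^{-1}$. Since $\tau$ is orientation reversing, $\tau_*$ is moreover an isometry of the Blanchfield pairing up to conjugation, i.e.\ $\Bl(\tau_* x,\tau_* y)=\ol{\Bl(x,y)}$, where $\ol{\phantom{x}}$ denotes the involution $t\mapsto t^{-1}$. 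I would package $(\Bl,\tau_*)$ as the basic invariant and show that an equivariant locally flat slice surface of genus $g$ forces a metabolizer of $\Bl$ whose failure to be $\tau_*$-invariant is controlled linearly by $g$.

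First I would record the local algebra of one summand. As $K$ is genus one and algebraically slice with $\Delta_K\neq 1$, the module $V$ is isomorphic to $\Qt/(f)\oplus \Qt/(\ol f)$ for a linear factor $f$ with $\Delta_K\doteq f\ol f$ and $f,\ol f$ coprime; the summands $A:=\Qt/(f)$ and $\ol A:=\Qt/(\ol f)$ are precisely the two metabolizers of $\Bl$, and they are isotropic and mutually dual, forming a hyperbolic plane. Crucially, $t\mapsto t^{-1}$ interchanges the roots of $f$ and $\ol f$, so $\tau_*$ \emph{swaps} $A$ and $\ol A$: neither classical metabolizer is $\tau_*$-invariant. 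This is the algebraic shadow of the fact that $K$, though slice, is not equivariantly slice, and the hypothesis $\Delta_K\neq 1$ is exactly what guarantees this swapping is nontrivial. Passing to the connected sum, an equivariant connected sum of $n$ copies yields $V_n\cong A^{\oplus n}\oplus \ol A^{\,\oplus n}$, an orthogonal sum of $n$ hyperbolic planes, with $\tau_{n,*}$ again swapping the $f$-primary part $A^{\oplus n}$ and the $\ol f$-primary part $\ol A^{\,\oplus n}$. Because $f,\ol f$ are coprime every $\Qt$-submodule respects the primary decomposition, so any metabolizer has the form $P=P_f\oplus P_f^{\perp}$ with $P_f\subseteq A^{\oplus n}$ and annihilator $P_f^{\perp}\subseteq \ol A^{\,\oplus n}$.

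The heart of the argument is then a signature/definiteness count. Using $\tau_{n,*}$ to identify $\ol A^{\,\oplus n}$ with the dual of $A^{\oplus n}$, the discrepancy between this identification and the Blanchfield duality is a nondegenerate form $\phi$ on $A^{\oplus n}\cong\Q^{n}$, whose signature is additive under equivariant connected sum and hence equals $n\cdot\sign\phi_{K}$ with $|\sign\phi_K|=1$. An invariant metabolizer $\tau_{n,*}(P)=P$ would require an isotropic subspace $P_f$ with $\tau_{n,*}(P_f)=P_f^{\perp}$, which is obstructed precisely by $\sign\phi$; more generally, a genus $g$ equivariant surface relaxes the isotropy condition only within a subspace of dimension at most $4g$ coming from $H_1(\Sigma;\Q)$ (the factor $4$ reflecting the two fixed arcs of $\tau|_K$, equivalently the doubling inherent in the quotient/Smith-theoretic setup). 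Combining $|\sign\phi_n|=n$ with the inequality $|\sign\phi_n|\le 4\,\wt g_4(K_n,\tau_n)$ then yields $\wt g_4(K_n,\tau_n)\ge n/4$.

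The main obstacle will be establishing this general lower bound rigorously: one must show that an equivariant genus $g$ surface in $D^4$, invariant under a locally linear extension $\wh\tau$, produces the $\tau_*$-invariant-up-to-defect metabolizer with defect bounded by $4g$. This requires the usual ``half-lives-half-dies'' input for the metabolizer, carried out equivariantly in the locally flat category, together with careful bookkeeping of the fixed-point set of $\wh\tau$ via Smith theory and of the $\wh\tau$-action on $H_1(\Sigma)$, so that the genus enters with the correct constant. I would also need to verify that the equivariant connected sum is set up (via a choice of half-axes and basepoints) so that $\tau_{n,*}$ acts diagonally, swapping the $f$- and $\ol f$-parts within each summand exactly as claimed. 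Granting this, the final clause is immediate: if $K$ is slice then $K_n$ is slice, so $g_4(K_n)=0$ and $\wt g_4(K_n,\tau_n)-g_4(K_n)\ge n/4\to\infty$.
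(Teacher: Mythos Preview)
Your algebraic setup is correct and parallels the paper's Proposition~\ref{prop:genusone}: the decomposition $V \cong \Qt/(f) \oplus \Qt/(\ol f)$ with $\tau_*$ swapping the summands is exactly what is computed there, and the paper's conclusion that $\Bl(x,\tau_*x) \neq 0$ for all nonzero $x \in \A(K_n)$ is your claim that $\phi$ is definite. One caveat: the form $(x,y)\mapsto\Bl(x,\tau_*y)$ takes values in $\Q(t)/\Qt$, not $\R$, so ``signature'' is not directly meaningful. For genus one knots you can rescue this because $\Qt/(f) \cong \Q$ as a $\Q$-vector space and the form becomes a rational scalar times a fixed nonzero element of $\Q(t)/\Qt$, but this is special to the linear-factor situation; the paper accordingly works with the vanishing condition on $\Bl(\cdot,\tau_*\cdot)$ directly rather than extracting a numerical signature.

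The genuine gap is the inequality $|\sign\phi_n| \le 4\,\wt g_4$, which you correctly flag as the main obstacle but only sketch. Your proposed mechanism---a defect of dimension at most $4g$ coming from $H_1(\Sigma;\Q)$, with the constant $4$ attributed to the two fixed arcs of $\tau|_K$ or to Smith-theoretic doubling---is not how the paper obtains the bound, and I do not see how to make that heuristic rigorous. The paper's Theorem~\ref{thm-main-obstruction-intro} proceeds via \emph{generating rank} over $\Lambda$: one builds a $4$-manifold $Z$ with $\partial Z = M_K$ by attaching $S^1 \times H$ (a handlebody) to the surface exterior, so that the free part of $H_2(Z;\Lambda)$ has rank $2g$; then a careful analysis of the long exact sequence of $(Z,M_K)$ with $\Lambda$-coefficients, together with a duality lemma identifying $\ker(j_1|_T)$ with $\coker(j_2|_T)$, yields $\grk \A(K) \le 4g + 2k$, where $k$ is the maximal generating rank of any submodule on which both $\Bl$ and $\Bl(\cdot,\tau_*\cdot)$ vanish. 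The factor $4$ thus arises as $2\times 2$: one $2$ from the rank $2g$ of $H_2(Z;\Lambda)$, and another from the two terms $\grk\ker(i_1)+\grk\ker(j_1)$ in the generating-rank estimate. The equivariance of the surface enters only through the $\tau_*$-invariance of $\ker(i_*\colon H_1(M_K;\Lambda)\to H_1(Z;\Lambda))$; Smith theory and the fixed-point structure of $\tau$ on $K$ play no role in the genus estimate.
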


\begin{figure}[h!]
 \raisebox{-0.5\height}{\includegraphics[height=3cm]{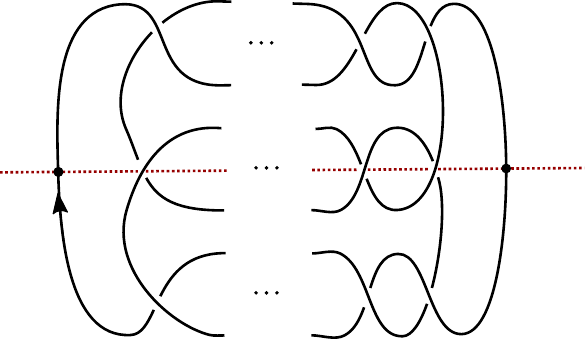}}
 \qquad
 \raisebox{-0.52\height}{\includegraphics[height=3.4cm]{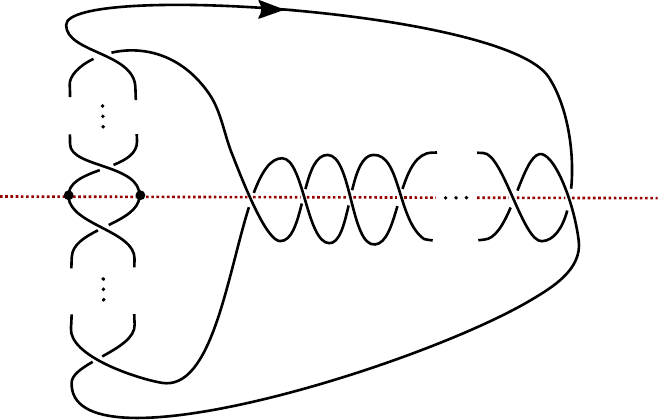}}
\caption{The pretzel knots $P(a,-a,a)$ for odd $a>1$ (left) and the generalized twist knots with continued fraction expansion $[b, b+2]^+$ for even $b>0$ (right) are strongly invertible genus one slice knots with nontrivial Alexander polynomials.}
\label{fig:examplefamilies}
\end{figure}

There are many examples of strongly invertible genus one slice knots with nontrivial Alexander polynomial;  see for example Figure~\ref{fig:examplefamilies}.
Thus the topological equivariant 4-genus of strongly invertible slice knots can be arbitrarily large.

A fixed knot can admit multiple inequivalent strong inversions,  as is the case for twist knots with even crossing number.  The equivariant connected sum is also not unique, and depends on the choice of a direction~\cite{Sakuma} (see Section~\ref{sec:equiv-conn-sum-and-equiv-conc-gp}). However Theorem~\ref{thm:intro-arb-large} holds for any choice of strong inversion on $K$ and any equivariant connected sum $K_n$,  so long as we use the same $\tau$ for each copy of $K$.

The study of strongly invertible knots up to equivariant concordance was instigated by Sakuma \cite{Sakuma}, who defined an equivariant knot concordance group of directed strongly invertible knots, and introduced the $\eta$ polynomial,  a homomorphism  from the equivariant knot concordance group to the abelian group $\Zt$.   The $\eta$ polynomial was originally defined as an obstruction to smooth equivariant concordance. However using results from \cite{FQ} one can show that it extends to an obstruction in the topological category.  Borodzik-Dai-Mallick-Stoffregen inform us that a proof of this will appear soon in work of theirs on equivariant concordance, so to avoid duplicating effort we will not provide our own proof.  It follows that the $\eta$ polynomial obstructs many strongly invertible slice knots from being equivariantly slice. But beyond this $\eta$ does not give information on the equivariant 4-genus.

Theorem~\ref{thm:intro-arb-large} gives an alternative proof of the analogous smooth result, that the smooth equivariant 4-genus can be arbitrarily large for slice knots, due to Dai-Mallick-Stoffregen~\cite{DMS}, and proven using knot Floer homology.
Our methods do not recover their specific examples, but tend to require significantly easier computations, as evidenced by the fact that Theorem~\ref{thm:intro-arb-large} applies to a large class of strongly invertible knots.  In \cite{DMS} they also consider the \emph{isotopy-equivariant 4-genus}, where one relaxes the condition that $\wh{\tau}(\Sigma) = \Sigma$ to instead to require that $\wh{\tau}(\Sigma)$ is ambiently isotopic rel.\ boundary to $\Sigma$.  Our lower bounds extend to this setting with identical proofs.

There has been further significant recent interest in equivariant concordance of strongly invertible knots, including by Dai-Hedden-Mallick~\cite{DHM},  Boyle-Issa~\cite{Boyle-Issa},  Alfieri-Boyle~\cite{Alfieri-Boyle}, and 
Di Prisa~\cite{DiPrisa}.

Theorem~\ref{thm:intro-arb-large} is a consequence of our main obstruction theorem, which reads as follows. The \emph{generating rank} of a $\Qt$-module $Q$, denoted $\grk Q$, is by definition the number of elements in a generating set of minimal cardinality. The (rational) \emph{Alexander module} of an oriented knot $K$ is the first homology $\A(K):= H_1(E_K;\Qt)$ of the infinite cyclic cover of the knot exterior $E_K$. This admits a nonsingular, Hermitian, sesquilinear \emph{Blanchfield pairing}~\cite{Blanchfield} $\Bl_K \colon \A(K) \times \A(K) \to \Q(t)/\Qt$, whose definition we will recall in detail in Section~\ref{section:Bl-form}.

\begin{theorem}\label{thm-main-obstruction-intro}
  Let $(K,\tau)$ be a strongly invertible knot. Let $k$ be the maximal generating rank of any submodule $P$ of $\A(K)$ satisfying
$\Bl_K(x, y)=0=\Bl_K(x, \tau_*(y))$
   for all $x,y \in P$.  Then
\[\wt{g}_4(K,\tau) \geq \frac{\grk \A(K) - 2k}{4}.\]
\end{theorem}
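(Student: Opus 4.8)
The plan is to follow the classical strategy for bounding the slice genus via the Blanchfield form, upgraded to respect the involution. Suppose $\wt{g}_4(K,\tau) = g$, and fix a $\wh\tau$-invariant, locally flat, oriented surface $\Sigma \subseteq D^4$ of genus $g$ with $\bdry\Sigma = K$, together with the locally linear involution $\wh\tau \colon D^4 \to D^4$. Let $W := D^4 \setminus \nu(\Sigma)$ be the surface exterior. Since $K$ is null-homologous, $H_1(W;\Z) \cong \Z$ is generated by a meridian $\mu$ of $\Sigma$, so the coefficient system $\Lambda := \Qt$ and the Blanchfield machinery are available on $W$. The involution $\wh\tau$ restricts to $W$, and because $\tau|_K$ is orientation reversing it sends $\mu \mapsto -\mu$; hence $\wh\tau_*$ is $\Lambda$-semilinear (it intertwines $t$ and $t^{-1}$) on $H_*(W;\Lambda)$.

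The central object is the submodule $P := \ker\big(\A(K) = H_1(E_K;\Lambda) \to H_1(W;\Lambda)\big)$ induced by the inclusion $E_K \into \bdry W \subseteq W$. I would establish two properties. First, $P$ is isotropic for $\Bl_K$: this is the standard duality argument in the $4$-manifold $W$, showing that the Blanchfield pairing of two classes that die in $H_1(W;\Lambda)$ vanishes, and it is insensitive to the involution. Second, $P$ is $\tau_*$-invariant: the inclusion-induced map commutes with $\wh\tau$ by naturality, so $x \in P$ forces $\tau_*(x) \in P$. Combining these, for all $x,y \in P$ we have both $\Bl_K(x,y)=0$ and, since $\tau_*(y) \in P$, also $\Bl_K(x,\tau_*(y))=0$. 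Thus $P$ is admissible in the sense of the theorem, and therefore $\grk P \le k$.

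It remains to bound $\grk\A(K) - 2\grk P$ from above in terms of $g$. Writing $P^\perp$ for the annihilator of $P$ under $\Bl_K$, nonsingularity gives $\A(K)/P^\perp \cong \overline{P}^{\,\wedge}$ and hence $\grk(\A(K)/P^\perp) = \grk P$; isotropy gives $P \subseteq P^\perp$, and together these yield the estimate $\grk\A(K) - 2\grk P \le \grk(P^\perp/P)$. I would then control $\grk(P^\perp/P)$ by the genus: via the half-lives-half-dies principle for the nonsingular linking form on the torsion of $H_1(\bdry W;\Lambda)$, the module $P^\perp/P$ is a subquotient of the contribution of the surface piece $\bdry_+\nu(\Sigma) \cong \Sigma \times S^1$, whose first $\Lambda$-homology is $H_1(\Sigma;\Q) \cong \Q^{2g}$. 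Accounting for this class together with its dual under the nonsingular pairing produces the bound $\grk(P^\perp/P) \le 4g$, so that $\grk\A(K) - 2\grk P \le 4g$.

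Putting the pieces together, $\grk\A(K) - 2k \le \grk\A(K) - 2\grk P \le 4g = 4\,\wt{g}_4(K,\tau)$, which rearranges to the claimed inequality. The main obstacle is the last step: making the half-lives-half-dies argument precise over the PID $\Qt$ in the locally flat topological category, correctly identifying the surface's contribution to $H_1(\bdry W;\Lambda)$ through the Mayer--Vietoris decomposition $\bdry W = E_K \cup_{T^2} (\Sigma \times S^1)$, and tracking generating ranks (as opposed to $\Q$-dimensions) to obtain the constant. The semilinearity of $\wh\tau_*$ plays no role in the size estimate; its only function is to guarantee that the naturally occurring isotropic submodule $P$ is $\tau_*$-invariant, and hence admissible for the quantity $k$.
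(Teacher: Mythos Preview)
Your overall strategy is close to the paper's, but there is a genuine gap \emph{earlier} than the step you flag. You assert that $P := \ker(\A(K) \to H_1(W;\Lambda))$ is isotropic for $\Bl_K$ by ``the standard duality argument.'' That argument, however, only shows $\Bl_K(\partial a, y) = 0$ when $a$ lies in the \emph{torsion} submodule $TH_2$ of the relative group and $y \in \ker(i_*)$: the commutative square involving Poincar\'e duality and the Bockstein only makes sense on $TH^2$, since the Bockstein $H^1(-;\Q(t)/\Lambda) \to H^2(-;\Lambda)$ has image exactly the torsion. When $g>0$ the relative $H_2$ has free rank $2g$, and the free part can map nontrivially under $\partial$ into the torsion module $\A(K)$. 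So the claim that all of $P$ is self-annihilating under $\Bl_K$ is not justified, and without it your chain of inequalities does not start.

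The paper handles exactly this point by passing to the smaller submodule $Q := \partial\big(TH_2(Z, M_K; \Lambda)\big) \subseteq P$ (after capping off the surface exterior to a $4$-manifold $Z$ with $\partial Z = M_K$). For $Q$ the duality argument does give $\Bl_K|_{Q \times P} = 0$; since $P$ is $\tau_*$-invariant this yields $\Bl_K(x,y)=0=\Bl_K(x,\tau_*(y))$ for all $x,y \in Q$, so $Q$ is admissible and $\grk Q \le k$. The price is that one must now bound $\grk Q$ from \emph{below}, and this is where the real work lies: the paper proves $\grk Q \ge \tfrac{1}{2}\grk\A(K) - 2g$ via a generating-rank argument (their Proposition~\ref{prop:nontrivialitypartial}) resting on a duality lemma that identifies $\ker(j_1|_T) \cong \coker(j_2|_T)$ in the long exact sequence of the pair. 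This replaces your half-lives-half-dies sketch, but it is applied to $Q$ rather than to $P^\perp/P$. Your instinct that the size estimate is the delicate step is right; the subtlety is that it is entangled with choosing the correct submodule in the first place.
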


In order to apply this lower bound to a given knot, one only needs to make a relatively straightforward computation of the Blanchfield pairing, which can be done in terms of a Seifert matrix using~\cite{Kearton,Friedl-Powell-Moscow}.  To prove Theorem \ref{thm:intro-arb-large}, we compute that  $k=0$  and $\grk \A(K) =n$ when $K=\#^n J$ and $J$ is a  genus one algebraically slice knot with $\Delta_J$ nontrivial.

\subsection{An equivariant algebraic knot concordance group}

A \emph{direction} for a strongly invertible knot $(K,\tau)$ consists of a choice an orientation of the great circle $\gamma$ and a choice of connected component of $\gamma \sm K$.   The set of directed, strongly invertible knots admits a well-defined connected sum, with respect to which it forms a group if we quotient by the relation of equivariant concordance. Here two strongly invertible knots $(K_1,\tau_1)$ and $(K_2,\tau_2)$   are \emph{equivariantly concordant} if there is a locally flat concordance $A \subseteq S^3 \times I$ between $K_1$ and $K_2$ together with an extension of $\tau_1$ and $\tau_2$ to an involution $\wh{\tau} \colon S^3 \times I \to S^3 \times I$ with $\wh\tau(A) =A$, and such that the directions are preserved. We give more details in Section~\ref{sec:equiv-conn-sum-and-equiv-conc-gp}.

Taking the Blanchfield form $\Bl_K^{\Z}$ of a knot $K$ gives rise to a homomorphism from the knot concordance group to the algebraic knot concordance group, $\Bl \colon \mathcal{C} \to \mathcal{AC}$. The latter is the Witt group of abstract Blanchfield forms, which is isomorphic to the possibly more familiar Witt group of Seifert forms. The analogous homomorphism for odd high  dimensional knots $S^{2k-1} \subseteq S^{2k+1}$ is an isomorphism for $k \geq 2$ \cite{Levine:1969-1, Cappell-Shaneson-top-knots}.
For $k=1$ the algebraic concordance group has been the framework for deeper investigation of $\mathcal{C}$. See for example~\cite{CassonGordon, Jiang, Endo, Liv-2002-b, Liv-2002-a, COT1,Cochran-Harvey-Leidy:2009-1, Liv-2010, CHL-fractal, Powell-second-order, Franklin, OSS-upsilon, Cha-2021}.

The Blanchfield form interpretation of the algebraic concordance group lends itself to generalisation to the equivariant setting.
Let $\mathcal{C}^{SI}$ denote the equivariant concordance group of strongly invertible knots.  We define an \emph{equivariant algebraic concordance group} $\mathcal{AC}^{SI}$ by considering a Witt group of  abstract Blanchfield forms $(H,\Bl)$ endowed with an anti-isometry $\tau \colon H \to H$, and requiring metabolisers to be $\tau$-invariant. We give the detailed definition of $\mathcal{AC}^{SI}$ and a homomorphism $\Psi \colon \mathcal{C}^{SI} \to \mathcal{AC}^{SI}$ in Section~\ref{section:equiv-alg-conc}.   This fits into a commutative diagram, where the vertical maps forget the inversion and the horizontal maps pass from geometry to algebra.
\[\begin{tikzcd}
  \mathcal{C}^{SI} \arrow[r,"\Psi"] \arrow[d,"F"] & \mathcal{AC}^{SI} \arrow[d] \\
  \mathcal{C} \arrow[r] & \mathcal{AC}.
\end{tikzcd}\]
The bottom homomorphism is surjective~\cite{Levine:1969-1} and has kernel of infinite rank~\cite{Jiang}.

\begin{theorem}\label{thm:subgroup-of-ker-F}
  There is a subgroup of $\ker( F \colon \mathcal{C}^{SI} \to \mathcal{C})$ of infinite rank, and whose image in $\mathcal{AC}^{SI}$ is infinite rank.
\end{theorem}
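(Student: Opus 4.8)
The plan is to produce an infinite family of strongly invertible knots that are individually slice (hence trivial in $\mathcal{C}$, so lying in $\ker F$) but detected both in $\mathcal{C}^{SI}$ and in the equivariant algebraic concordance group $\mathcal{AC}^{SI}$. The natural source of examples is the family already highlighted in the introduction: genus one strongly invertible slice knots $J$ with nontrivial Alexander polynomial (e.g.\ the pretzel knots $P(a,-a,a)$ or the generalized twist knots of Figure~\ref{fig:examplefamilies}). Since each such $J$ is slice, $F([J,\tau]) = [J] = 0$ in $\mathcal{C}$, so $[J,\tau] \in \ker F$, and likewise every equivariant connected sum of copies of these knots lies in $\ker F$. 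The entire problem therefore reduces to showing that a suitably chosen infinite collection of such classes is linearly independent in $\mathcal{C}^{SI}$ (in fact in its image in $\mathcal{AC}^{SI}$).

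First I would pin down what the homomorphism $\Psi$ records and how the equivariant algebraic structure $(\A(J), \Bl_J, \tau_*)$ behaves under equivariant connected sum. The expectation, parallel to the classical case, is that $\Psi$ is additive, so that the class of an equivariant connected sum is the direct sum of the individual $(\A, \Bl, \tau_*)$ data modulo $\tau$-invariant metabolisers. The key quantitative handle is Theorem~\ref{thm-main-obstruction-intro}: if for a knot $K$ the maximal generating rank $k$ of a $\tau$-invariantly isotropic submodule is strictly smaller than $\tfrac12 \grk \A(K)$, then no $\tau$-invariant metaboliser of full corank exists, which obstructs $\Psi([K,\tau])$ from vanishing. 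Concretely, the computation already promised for Theorem~\ref{thm:intro-arb-large}, namely that $k = 0$ and $\grk \A(\#^n J) = n$, shows that $\#^n J$ is nonzero in $\mathcal{AC}^{SI}$ for every $n \geq 1$; this immediately gives that $[J,\tau]$ has infinite order in $\mathcal{AC}^{SI}$, hence in $\mathcal{C}^{SI}$, since $[\#^n J] = n[J]$ already generates infinitely many distinct classes.

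To upgrade from a single infinite-order class to an \emph{infinite rank} subgroup, I would choose infinitely many knots $J_1, J_2, \dots$ of this type whose Alexander polynomials are pairwise coprime (for instance, realizing distinct irreducible symmetric polynomials $\Delta_{J_i}(t)$), so that their Alexander modules, and hence their Blanchfield forms, are supported over disjoint sets of primes in $\Qt$. For a finite linear combination $\sum a_i [J_i, \tau_i]$, the Alexander module of the corresponding equivariant connected sum splits as a direct sum over $i$ according to these coprime supports, and any $\tau$-invariant isotropic submodule must respect this primary decomposition. One then applies the generating-rank estimate of Theorem~\ref{thm-main-obstruction-intro} summand by summand: the $J_i$-summand contributes generating rank $|a_i|$ with $k_i = 0$, so the total deficiency $\grk \A - 2k = \sum |a_i|$ is nonzero whenever some $a_i \neq 0$, forcing $\Psi(\sum a_i [J_i,\tau_i]) \neq 0$. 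This establishes $\Z$-linear independence of the images in $\mathcal{AC}^{SI}$ and hence infinite rank of both the subgroup of $\ker F$ and its image.

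I expect the main obstacle to be the bookkeeping of the equivariant algebraic structure under connected sum, specifically verifying that the involution $\tau_*$ on the direct-sum Alexander module does not mix the coprime primary summands and that a $\tau$-invariant metaboliser is forced to decompose compatibly with the coprimality splitting. One must be careful that the involution on an equivariant connected sum genuinely restricts to each summand; this depends on the well-definedness and additivity properties of $\mathcal{AC}^{SI}$ and the connected sum operation developed in Section~\ref{section:equiv-alg-conc}, and on the fact that the classical primary decomposition of a rational Alexander module is canonical and therefore preserved by any module automorphism. Once additivity of $\Psi$ and the primary splitting are in hand, the independence argument is a direct application of Theorem~\ref{thm-main-obstruction-intro} to each coprime block, with no further delicate computation required.
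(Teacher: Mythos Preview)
Your proposal is correct and essentially matches the paper's approach: the same family of genus one slice knots with pairwise coprime nontrivial Alexander polynomials, the same Blanchfield computation (Proposition~\ref{prop:genusone}, i.e.\ your $k=0$), and the same coprimality/primary-decomposition argument to separate summands and conclude that no nontrivial linear combination admits a $\tau$-invariant metabolizer. The only cosmetic difference is that the paper argues directly via Proposition~\ref{prop:equivslice} and the definition of $\mathcal{AC}^{SI}$ (Theorem~\ref{theorem:on-ker-F--in-body} and its corollary) rather than routing through Theorem~\ref{thm-main-obstruction-intro}; the genus-bound theorem itself is not actually needed for Theorem~\ref{thm:subgroup-of-ker-F}, only the underlying computation that $k=0$, and your concern about $\tau_*$ mixing primary summands is handled by Proposition~\ref{prop:connected-sum-well-behaved}, which shows $\tau_*$ acts diagonally on the connected sum.
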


\begin{remark}\label{remark-what-do-we-know}
  What else do we know about the maps in the square above?
\begin{enumerate}
  \item It follows from \cite{Liv-83} that $F$ is not surjective. If it were, every knot would be concordant to a reversible knot, but a knot that is concordant to a reversible knot is concordant to its own reverse, and Livingston found knots not concordance to their own reverses.
 \item We do not know whether $\Psi$ is surjective, nor whether the forgetful map $\mathcal{AC}^{SI} \to \mathcal{AC}$ is surjective.  However some evidence towards the surjectivity of $\Psi$ was given by Sakai,  who showed~\cite{Sakai} that every Alexander polynomial is realized by a strongly invertible knot.
\item\label{item:sakuma} Sakuma's $\eta$-invariant~\cite{Sakuma} was already an effective way to obstruct knots from being equivariantly slice, and Sakuma used it to show that $\ker F$ is nontrivial, for example by showing that the Stevedore knot is not equivariantly slice.  Moreover he showed that for $K$ the untwisted Whitehead double of the trefoil and figure eight knots, $\eta(K) \neq 0$, and therefore $\ker F \cap \ker \Psi$ is nontrivial. Note that these examples are isotopy-equivariantly slice by \cite{Conway-Powell-HR-discs}, since they have Alexander polynomial one.
\item All of $\mathcal{AC}^{SI}$, $\mathcal{AC}$, and $\mathcal{C}$ are abelian, whereas $\mathcal{C}^{SI}$ is not~\cite{DiPrisa}. So the nontrivial commutators found by Di Prisa also lie in $\ker F \cap \ker \Psi$.
\end{enumerate}
\end{remark}

Levine and Stoltzfus~\cite{Levine:1969-1, Stoltzfus} algebraically computed that $\mathcal{AC} \cong \Z^{\infty} \oplus (\Z/2\Z)^{\infty} \oplus (\Z/4\Z)^{\infty}$.  We aim to analyse the isomorphism type of $\mathcal{AC}^{SI}$ in future work.

In our proof of Theorem~\ref{thm:subgroup-of-ker-F}, we use genus one knots to exhibit the claimed subgroup of $\ker F$.
The proof also shows the following result, which we think worth emphasising.

\begin{corollary}\label{cor:notslice}
Let $K$ be a genus one strongly invertible knot with Alexander polynomial nontrivial. Then $K$ is not equivariantly slice.
\end{corollary}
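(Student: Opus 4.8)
The plan is to apply the main obstruction, Theorem~\ref{thm-main-obstruction-intro}: since $\wt{g}_4(K,\tau)$ is a nonnegative integer, it suffices to prove the strict inequality $\grk \A(K) - 2k > 0$, where $k$ is as in that theorem, because then $\wt{g}_4(K,\tau) \geq \tfrac14 > 0$ forces $\wt{g}_4(K,\tau) \geq 1$. Concretely, I will establish the two facts $\grk \A(K) = 1$ and $k = 0$.

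First I would compute the generating rank. A genus one Seifert surface presents $\A(K)$ as the cokernel of a $2 \times 2$ matrix over the PID $\Qt$, so $\grk \A(K) \leq 2$, and it is nonzero precisely because $\Delta_K \neq 1$. Suppose toward a contradiction that $\grk \A(K) = 2$. By the structure theorem, $\A(K) \cong \Qt/(q_1) \oplus \Qt/(q_2)$ with non-unit invariant factors $q_1 \mid q_2$; since $\deg q_1 + \deg q_2 = \deg \Delta_K \leq 2$ and each $\deg q_i \geq 1$, both $q_i$ are linear and agree up to a unit. Hence $\Delta_K$ is, up to a unit, the square of a linear polynomial $t-\lambda$ with $\lambda \in \Q^\times$. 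Symmetry of the Alexander polynomial forces the root set to be closed under $\lambda \mapsto \lambda^{-1}$, so $\lambda = \lambda^{-1}$ and $\lambda = \pm 1$. But $\Delta_K(1) = \pm 1$ and $\Delta_K(-1) = \pm\det(K)$ is odd, so neither $1$ nor $-1$ is a root of $\Delta_K$; this contradiction shows $\grk \A(K) = 1$.

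It remains to show $k = 0$, i.e.\ that the only submodule $P \subseteq \A(K)$ with $\Bl_K(x,y) = 0 = \Bl_K(x, \tau_*(y))$ for all $x,y \in P$ is $P = 0$. Write $\A(K) \cong \Qt/(\Delta_K)$. If $\Delta_K$ is irreducible over $\Q$, then the only submodules are $0$ and $\A(K)$, and $\A(K)$ is not isotropic because $\Bl_K$ is nonsingular on a nonzero module; so $P = 0$. If $\Delta_K$ is reducible, then the root analysis above forces $\Delta_K = (t-\mu)(t-\mu^{-1})$ up to a unit, with $\mu \in \Q^\times$ and $\mu \neq \pm 1$, giving a decomposition $\A(K) = M_1 \oplus M_2$ into the $(t-\mu)$- and $(t-\mu^{-1})$-primary parts. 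A short sesquilinearity computation (using that $t$ acts as the rational scalar $\mu$ on $M_1$, together with the conjugate-linearity of $\Bl_K$ in the second variable) shows that $\Bl_K(M_i,M_i)=0$ while $\Bl_K(M_1,M_2) \neq 0$ by nonsingularity. Thus $M_1$ and $M_2$ are nonzero isotropic submodules and each satisfies the first condition. This is exactly where the $\tau$-twisted second condition does the essential work: because the strong inversion reverses the knot meridian, $\tau_*$ is conjugate-linear and therefore interchanges the conjugate primary parts, $\tau_*(M_1) = M_2$, whence $\Bl_K(M_1, \tau_*(M_1)) = \Bl_K(M_1, M_2) \neq 0$. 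So neither $M_1$ nor $M_2$ satisfies both conditions, and again $P = 0$.

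The main obstacle is this last step: isotropic submodules genuinely exist whenever $\Delta_K$ is reducible, so the first (metaboliser) condition alone does not suffice, and one must exploit that the inversion swaps the two conjugate primary components while the Blanchfield form pairs them nontrivially. Granting $\grk \A(K) = 1$ and $k = 0$, Theorem~\ref{thm-main-obstruction-intro} gives $\wt{g}_4(K,\tau) \geq \tfrac14$, hence $\wt{g}_4(K,\tau) \geq 1$, so $K$ is not equivariantly slice.
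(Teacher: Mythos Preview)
Your proof is correct, but the route differs from the paper's in two ways.  First, the paper does not invoke Theorem~\ref{thm-main-obstruction-intro} here; it proves the corollary earlier, using only the equivariant-metaboliser obstruction (Proposition~\ref{prop:equivslice}) together with the explicit Seifert-matrix Blanchfield computation of Proposition~\ref{prop:genusone}, and it case-splits on algebraic sliceness (if $K$ is not algebraically slice it is not even slice, so only the algebraically slice case needs work).  Second, in that remaining case the paper computes directly, from the Seifert form $\big(\begin{smallmatrix}0 & m+1\\ m & \ell\end{smallmatrix}\big)$, that $\Bl_K(x,\tau_*(x))\neq 0$ for every nonzero $x$, which immediately rules out a nontrivial invariant metaboliser.

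Your argument replaces this explicit computation by abstract module theory: the primary decomposition $\A(K)=M_1\oplus M_2$ over $\Qt$, the sesquilinearity forcing $\Bl_K(M_i,M_i)=0$, and the anti-linearity of $\tau_*$ forcing $\tau_*(M_1)=M_2$.  This is arguably more conceptual, covers the irreducible-$\Delta_K$ case uniformly rather than disposing of it via ``not algebraically slice $\Rightarrow$ not slice'', and makes transparent \emph{why} the inversion is essential (it swaps the two conjugate primary pieces that the Blanchfield form pairs nontrivially).  On the other hand, you are leaning on the later and heavier Theorem~\ref{thm-main-obstruction-intro}; note that your $k=0$ computation already shows there is no nonzero $\tau_*$-invariant isotropic submodule, so Proposition~\ref{prop:equivslice} alone would finish the job and keep your argument logically prior to Section~\ref{section:lower-bound}.
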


This is also formally a consequence of Theorem~\ref{thm:intro-arb-large}, although we prove the corollary before we start the proof of Theorem~\ref{thm:intro-arb-large}.

\subsubsection*{Organisation of the paper}

In Section~\ref{section:Bl-form} we recall the Alexander module and the Blanchfield form of a knot, and we show that a strong involution $\tau$ induces an anti-isometry of the Blanchfield form. We also recall the definition of equivariant connected sum, and consider its effect on this data.
In Section~\ref{section-Bl-computations} we perform some detailed computations of Blanchfield forms on some key examples.
In Section~\ref{section:equiv-alg-conc} we show that the Blanchfield form of an equivariantly slice strongly invertible knot has an equivariant metaboliser.  We use this observation to motivate the definition of an equivariant algebraic concordance group $\mathcal{AC}^{SI}$, and we then prove Theorem~\ref{thm:subgroup-of-ker-F} and Corollary~\ref{cor:notslice}.  We also give an infinite family of amphichiral knots that are infinite order in~$\mathcal{AC}^{SI}$.
Finally, Section~\ref{section:lower-bound} contains the proof of the lower bound in Theorem~\ref{thm-main-obstruction-intro}, and then by combining this theorem with the computations in Section~\ref{section-Bl-computations}, we deduce Theorem~\ref{thm:intro-arb-large}.

\subsubsection*{Acknowledgements}

The authors are grateful to Irving Dai, Abhishek Mallick, and Matthew Stoffregen for interesting conversations with MP about their work~\cite{DMS}, which motivated this project.
MP is also grateful to Maciej Borodzik and Wojciech Politarczyk for interesting conversations about strongly invertible knots.
The authors thank Chuck Livingston for helpful comments on a draft of this piece.

MP was partially supported by EPSRC New Investigator grant EP/T028335/1 and EPSRC New Horizons grant EP/V04821X/1.

\section{Alexander modules, Blanchfield forms,  equivariant connected sum,  and equivariant concordance}\label{section:Bl-form}

In this section we recall the equivariant connected sum of strongly invertible knots, and we deduce algebraic conclusions, on the level of Alexander modules and Blanchfield pairings, from the existence of a strongly invertible slice disc.
Throughout this section and the remainder of the article we write $\Lambda := \Qt$.

\begin{definition}
Given a finitely generated $\Lambda$-module $U$,  we define the following notions.
\begin{enumerate}
\item The order of $U$ is denoted by $|U|$,  and is an element of $\Lambda$ well-defined up to multiplication by units of $\Lambda$.
\item The $\Lambda$-module  $\ol{U}$ setwise agrees with $U$ and has $\Lambda$-action defined by $p(t) \cdot_{\ol{U}} u= \ol{p(t)} \cdot_U u$ for all $p(t) \in \Lambda$ and $u \in U$,  where $\ol{\, \cdot \,}$ is the $\Q$-linear involution on $\Lambda$ sending $t^k$ to $t^{-k}$ for all $k \in \mathbb{Z}$.
\end{enumerate}
\end{definition}

\subsection{The involution induced on the Alexander module}

Let $K$ be an oriented knot in $S^3$ and let $E_K := S^3 \sm \nu K$ denote the exterior of $K$. Let $M_K := S^3_0(K)$ denote the result of 0-framed surgery on $S^3$ along $K$.
Let $\mu_K$ be an oriented meridian for $K$ and let $\lambda_K$ be a 0-framed oriented longitude.
Requiring that $\mu_K$ maps to $1 \in \Z$ determines surjections factoring through the Hurewicz maps:
\begin{align*}
\pi_1(E_K) & \to H_1(E_K;\Z) \cong \Z\cong \langle t \rangle,  \\
\pi_1(M_K) &\to H_1(M_K;\Z) \cong \Z\cong \langle t \rangle.
\end{align*}
These in turn determine coefficient systems for twisted homology:
\begin{align*}
H_i(E_K;\Lambda)& :=H_i(\Lambda \otimes_{\Z[\pi_1(E_K)]} C_*(\wt{E}_K)), \\
H_i(M_K;\Lambda)& :=H_i(\Lambda \otimes_{\Z[\pi_1(M_K)]} C_*(\wt{M}_K)).
\end{align*}
 One makes analogous constructions for $\Zt$ in place of $\Lambda = \Qt$.

\begin{definition}
The (rational) \emph{Alexander module} of $K$ is the homology $\A(K) := H_1(E_K;\Lambda)$.
The inclusion-induced map $H_1(E_K; \Lambda) \to H_1(M_K; \Lambda)$ is an isomorphism,
since the latter is the quotient of $H_1(E_K;\Lambda)$ by the class represented by the 0-framed longitude of $K$, which is the trivial class because any Seifert surface for $K$ exhibits the longitude as a double commutator in $\pi_1(E_K)$.

The \emph{integral Alexander module} of $K$ is $\A^{\Z}(K) := H_1(E_K;\Zt)$, or equivalently $H_1(M_K;\Zt)$, for the same reason as above.
\end{definition}

A strong involution determines some additional algebraic structure on the Alexander module, of the following type.

\begin{definition}\label{defn:anti-automorphism}
Let $U$ be a $\Lambda$-module.
A $\Lambda$-module isomorphism $f \colon U \to \ol{U}$ is called an \emph{anti-automorphism}. That is, $f$ is a $\Q$-linear bijection  such that
\[f(t^k \cdot_U x)= t^{k} \cdot_{\ol{U}} f(x)= t^{-k} \cdot_U f(x)\]
for all $k \in \mathbb{Z}$ and $x \in U$.  An analogous definition holds for $\Zt$-modules in place of $\Lambda$-modules.
\end{definition}

Let $K$ be a strongly invertible knot with involution $\tau$.
Restricting $\tau$ to $E_K$ gives a function $\tau_{K} \colon E_K \to E_{K^r}$ that sends $\mu_K$ to $\mu_{K^r}$ and $\lambda_K \to \lambda_{K^r}$.
For every knot $J$,  the identity map on $S^3$ restricts to a function $\rho_J \colon E_{J} \to E_{J^r}$ that sends $\mu_J$ to $\mu_{J}^{-1}$ and $\lambda_J$ to $\lambda_J^{-1}$.

Therefore we can consider the composition
\[\rho_{K}^{-1}  \circ \tau \colon E_K \to E_K, \]
an orientation preserving homeomorphism of $E_K$ that squares to the identity map and satisfies   $\mu_K \mapsto \mu_K^{-1}$,  and $\lambda_K \mapsto \lambda_K^{-1}$.

Further, the map $\rho_{K}^{-1}  \circ \tau$ induces an anti-automorphism of $\A(K)$, as in Definition~\ref{defn:anti-automorphism},  which by a mild abuse of notation we refer to as $\tau_* \colon \A(K) \to \A(K)$.

\begin{definition}
Given a strongly invertible knot $(K, \tau)$,  let
\[ \tau_*:=(\rho_K^{-1} \circ \tau)_* \colon \A(K) \to \A(K)\]
be defined as above.  We call the anti-automorphism $\tau_*$ the \textit{inversion-induced map on the Alexander module}.
There is an analogous map $\tau_* \colon \A^{\Z}(K) \to \A^{\Z}(K)$.
\end{definition}

\begin{example}
We give some examples of inversion-induced maps.
\begin{enumerate}[(i)]
\item  For $K= 6_1$ the Stevedore knot, i.e.\ $b=2$ on the right of Figure~\ref{fig:examplefamilies}, we have that $\A(K) \cong \Lambda/(2t-5 + 2t^{-1})$, and $\tau_*(p(t)) = -p(t^{-1})$.
\item Let $K=4_1$ be the figure-eight knot,  with involution as indicated in Figure~\ref{fig:amphi}.  Then $\A(K) \cong \Lambda/ (t-3+t^{-1})$ and $\tau_*(q(t))=q(t^{-1})$.
\item Let $K = J \# J^r$ and let $\tau$ be the involution which switches the two factors. Then $\A(K)\cong \A(J) \oplus \A(J^r)$ and $\tau_*(x,y) = (y,x)$.
One can verify that for any $(x,y) \in A(J) \oplus \A(J^r)$ one has
\begin{align*}
\tau_*(t_{J \#J^r} \cdot(x,y))
= \tau_*(t_J \cdot x, t_{J^r} \cdot y)
=(t_{J^r} \cdot y,  t_J \cdot x)
= (t_{J}^{-1} \cdot y,  t_{J^r}^{-1} \cdot x)
=t_{J \# J^r}^{-1} \cdot \tau_*(x,y),
\end{align*}
so $\tau_*$ is indeed an anti-automorphism.
\end{enumerate}
\end{example}

\subsection{The Blanchfield pairing}\label{sec:Bl-pairing}

As above let $M_K$ be the result of zero-framed surgery on $S^3$ along a knot $K$.
We consider the sequence of isomorphisms
\[ \Theta \colon H_1(M_K;  \Lambda) \xrightarrow{\cong} H^2(M_K;\Lambda) \xrightarrow{\cong} H^1(M_K;\Q(t)/\Lambda) \xrightarrow{\cong} \Hom_{\Lambda}(H_1(M_K;\Lambda),\Q(t)/\Lambda).\]
These maps are given respectively by the inverse of Poincar\'{e} duality, the inverse of a Bockstein isomorphism corresponding to the short exact sequence of coefficients $0 \to \Lambda \to \Q(t) \to \Q(t) /\Lambda \to 0$, and the evaluation map. The Bockstein map is an isomorphism because $H_i(M_K;\Q(t)) =0$ for $i=1,2$, since $H_i(M_K;\Lambda)$ is $\Lambda$-torsion.  The evaluation map is an isomorphism by the universal coefficient theorem,  because $\Q(t)/\Lambda$ is an injective $\Lambda$-module and so $\Ext^1_{\Lambda}(H_0(M_K;\Lambda),\Q(t)/\Lambda) =0$.

\begin{definition}
  The Blanchfield pairing $\Bl_K \colon H_1(M_K;  \Lambda) \times H_1(M_K;  \Lambda) \to \Q(t)/\Lambda$ is given by $\Bl_K(x,y) := \Theta(y)(x)$.
\end{definition}

The Blanchfield pairing, originally defined in \cite{Blanchfield}, is sesquilinear, Hermitian, and nonsingular; see for example~\cite{Blanchfield-MP} for more details.  Here sesquilinear means that $\Bl_K(px,qy) = p \Bl_K(x,y) \ol{q}$ and Hermitian means that $\Bl_K(y,x) = \ol{\Bl_K(x,y)}$, for all $x,y \in H_1(M_K;\Lambda)$ and for all $p,q \in \Lambda$.
The analogous definition applies with $\Zt$ replacing $\Lambda$, giving rise to the integral Blanchfield form $\Bl^{\Z}_K$.  It is more work to show that the evaluation map is an isomorphism in this case, but it still holds, see e.g.~\cite{LevineKnotModules}.

Given a $\Lambda$-module $U$ and a sesquilinear, Hermitian pairing $B \colon U \times U \to \Q(t)/\Lambda$, there is an involuted pairing $\ol{B} \colon \ol{U} \times \ol{U} \to \Q(t)/\Lambda$ given by $\ol{B}(x,y) = \ol{B(x,y)}$.
The pairing $\ol{B}$ is sesquilinear in the sense that $\ol{B}(px,qy) = \ol{p}\ol{B}(x,y) q$.

\begin{definition}
Let $U$ be a $\Lambda$-module and let $B \colon U \times U \to \Q(t)/\Lambda$ be a sesquilinear, Hermitian pairing.
An anti-automorphism $f$ is called an \emph{anti-isometry} of $(U,B)$ if
\[ B(x,y)= \ol{B}(f(x), f(y))= \ol{B(f(x),  f(y))}\]
for all $x,y \in U$.  That is,  an anti-isometry induces an isometry  between $B \colon U \times U \to \Q(t)/\Lambda$ and $\ol{B} \colon \ol{U} \times \ol{U} \to \Q(t)/\Lambda$.
An analogous definition holds for $\Zt$-modules and a pairing with values in $\Q(t)/\Zt$.
\end{definition}

We saw in the previous section that $\tau_* \colon H_1(M_K;\Lambda) \to H_1(M_K;\Lambda)$ is an anti-automorphism,  and now prove the following.

\begin{proposition}\label{prop:anti-isometry}
 Let $(K,\tau)$ be a strongly invertible knot.
 The inversion-induced map on the Alexander module, $\tau_*$, induces an anti-isometry of the Blanchfield pairing $\Bl_K$. The same holds for the integral Alexander module and $\Bl^\Z_K$.
\end{proposition}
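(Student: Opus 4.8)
The plan is to track the Blanchfield pairing through the construction of $\tau_*$, using the fact that $\tau_* = (\rho_K^{-1}\circ\tau)_*$ is assembled from two geometric maps, and to reduce the anti-isometry property to the naturality of the Blanchfield pairing under (anti-)homeomorphisms of $M_K$. First I would recall that the Blanchfield pairing is defined via the sequence of isomorphisms $\Theta$ in Section~\ref{sec:Bl-pairing}, all of which are natural: Poincar\'e duality, the Bockstein, and the evaluation map all commute with maps induced by homeomorphisms, provided one keeps careful track of how a given homeomorphism acts on the coefficient system. The key point is that $\rho_K^{-1}\circ\tau$ sends $\mu_K\mapsto\mu_K^{-1}$, hence sends the generator $t$ of $\langle t\rangle = H_1(M_K;\Z)$ to $t^{-1}$. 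Consequently the induced map on $\Lambda$-twisted homology is not $\Lambda$-linear but \emph{conjugate-linear}, i.e.\ it realises the involution $\ol{\,\cdot\,}$ on $\Lambda$; this is exactly what makes $\tau_*$ an anti-automorphism rather than an automorphism, as recorded just before the statement.

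The main computation is then to check how the pairing transforms. I would set $\phi := \rho_K^{-1}\circ\tau \colon M_K \to M_K$ (extending the exterior map to the surgery as usual), and compute $\Bl_K(\phi_*(x),\phi_*(y))$ by pushing $\phi_*$ through $\Theta$. Because $\phi$ is orientation preserving on $M_K$ but reverses the coefficient system (sending $t\mapsto t^{-1}$), naturality of Poincar\'e duality and the Bockstein yields that $\Theta\circ\phi_* = \ol{\phi}^* \circ \Theta$, where $\ol{\phi}$ denotes $\phi$ viewed as a map of the conjugated coefficient system. Unwinding the definition $\Bl_K(x,y)=\Theta(y)(x)$ and the conjugate-linearity of $\phi_*$, the coefficient-level involution $\ol{\,\cdot\,}$ gets applied to the output of the pairing, producing precisely
\[
\Bl_K(\tau_*(x),\tau_*(y)) = \ol{\Bl_K(x,y)},
\]
which is the defining equation of an anti-isometry once rewritten as $\Bl_K(x,y) = \ol{\Bl_K(\tau_*(x),\tau_*(y))}$.

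I expect the main obstacle to be bookkeeping the conjugation consistently across the three maps comprising $\Theta$, rather than any deep difficulty: one must verify that the orientation-preserving character of $\phi$ (so Poincar\'e duality contributes no sign) combines with the coefficient reversal (so the evaluation pairing picks up $\ol{\,\cdot\,}$) to give a single overall bar and no spurious sign. A clean way to handle this is to factor the argument through the homeomorphism $\rho_K$ and the map $\tau$ separately: $\tau$ is orientation reversing on $S^3$ (since $\tau|_K$ reverses orientation) and $\rho_K$ is the restriction of the identity but reverses the meridian, and tracking each contribution to duality and to the coefficient system shows the two sources of sign/conjugation are the ones claimed. Finally, the integral case is formally identical: every step of the argument uses only naturality and the structure of $\Theta$, which by the remark in Section~\ref{sec:Bl-pairing} is available over $\Zt$ as well (the evaluation map being an isomorphism there by \cite{LevineKnotModules}), so the same computation yields the anti-isometry of $\Bl^\Z_K$.
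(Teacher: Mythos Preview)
Your approach is essentially the paper's, and the ``clean way'' you describe at the end is exactly how the paper argues: factor $\tau_* = (\rho_K^{-1}\circ\tau)_*$, observe that $\tau \colon E_K \to E_{K^r}$ induces an isometry $\Bl_K \cong \Bl_{K^r}$ (it is an orientation-preserving homeomorphism respecting the coefficient systems), and that $\rho_K$ identifies $\Bl_{K^r}$ with $\ol{\Bl}_K$ (it is the identity on points but reverses the preferred meridian, hence conjugates the coefficient system). The composition is then an isometry $\Bl_K \cong \ol{\Bl}_K$, i.e.\ an anti-isometry. Your longer direct analysis via naturality of $\Theta$ also works and simply unpacks what the paper leaves implicit.

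One correction to your bookkeeping: the parenthetical ``$\tau$ is orientation reversing on $S^3$ (since $\tau|_K$ reverses orientation)'' is wrong. The involution $\tau$ is rotation by $\pi$ about a great circle, which is orientation-\emph{preserving} on $S^3$; the restriction $\tau|_K$ reverses the orientation of $K$ only because $K$ meets the axis in two points, not because the ambient map reverses orientation. This matters: if $\tau$ were orientation-reversing, Poincar\'e duality would contribute a sign and you would obtain $-\ol{\Bl_K}$ rather than $\ol{\Bl_K}$. In fact both $\tau$ and $\rho_K$ are orientation-preserving on the exterior (consistent with your earlier correct assertion that $\phi$ is orientation-preserving), and the conjugation comes solely from the meridian reversal.
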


\begin{proof}
The homeomorphism $\tau \colon E_K \to E_{K^r}$ induces an isometry of Blanchfield pairings $\Bl_{K} \cong \Bl_{K^r}$.  The map  $\rho_K \colon E_{K^r} \to E_K$ identifies $\Bl_{K^r}$ with $\ol{\Bl}_K$.  Therefore $\rho_{K}^{-1}  \circ \tau$, the map which induces $\tau_* \colon \A(K) \to \A(K)$, induces an isometry of $\Bl_K$ with $\ol{\Bl}_K$, or in other words an anti-isometry of $\Bl_K$, as required.
\end{proof}

\subsection{Equivariant connected sum of knots}\label{sec:equiv-conn-sum-and-equiv-conc-gp}

We recall the definition of the connected sum of two directed strongly invertible knots, following Sakuma~\cite{Sakuma}.
A \emph{direction} on a strongly invertible knot $(K,\tau)$ is a choice of orientation of the great circle~$\gamma$, and a choice of one of the two connected component of $\gamma \sm K$.  A strongly invertible knot together with a choice of direction is called \emph{directed}. This extra data enables us to remove the indeterminacy in the definition of connected sum.

\begin{definition}[Equivariant connected sum]\label{defn:connsum}
Let $(K_1,\tau_1)$ and $(K_2, \tau_2)$ be directed strongly invertible knots,  so $\tau_i \colon (S^3, K_i) \to (S^3, K_i)$ is rotation by $\pi$ about the axis $\gamma_i$.
\begin{figure}[h!]
\begin{picture}(350,50)
\put(0,0){\includegraphics[height=1.5cm]{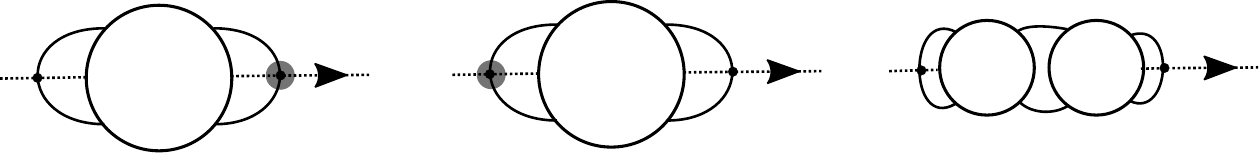}}
\put(38,18){$K_1$}
\put(90, 28){$\gamma_1$}
\put(80,8){\small $B_1$}
\put(165,18){$K_2$}
\put(217,28){$\gamma_2$}
\put(125,9){\small $B_2$}
\put(270,20){$K_1$}
\put(300,20){$K_2$}
\put(338, 30){$\gamma_{K_1\#K_2}$}
\end{picture}
\caption{Two directed strongly invertible knots (left and middle) and their connected sum (right).  For each of the three knots, the preferred component of the axis of symmetry passes through the point at infinity.  }
\label{fig:equivconnsum}
\end{figure}
 As illustrated in Figure~\ref{fig:equivconnsum}, for  $i=1,2$ let $B_i$ be a small neighborhood of one of the intersection points of $K_i$ with $\gamma_i$.  For $K_1$, use the intersection point that lies at the \emph{start} of the chosen connected component of $\gamma_1 \sm K_1$. For $K_2$, use the intersection point that lies at the \emph{end} of the chosen connected component of $\gamma_2 \sm K_2$.
 Arrange that $\overline{B}_i \cap K_i$ is an unknotted arc and such that $\tau_i$ restricts to a homeomorphism of pairs $(\overline{B}_i, \overline{B}_i \cap K_i)$.
 Let
 \[f\colon \partial\big(\overline{B}_1, \overline{B}_1 \cap K_1, \overline{B}_1 \cap\gamma_1\big) \to \partial \big(\overline{B}_2, \overline{B}_2 \cap K_2, \overline{B}_1 \cap\gamma_2\big)\]
be a homeomorphism of triples such that:
\begin{enumerate}[(i)]
  \item $f$ is an orientation-reversing homeomorphism of $S^3$;
  \item  $\tau_1 \circ f = \tau_2$;
  \item the point of $\ol{B}_1 \cap \gamma_1$ at which the orientation of $\gamma_1$ points into $B_1$ is identified with the point of $\ol{B}_1 \cap \gamma_2$ at which the orientation of $\gamma_2$ points out of $B_2$.
\end{enumerate}
 Then there is a homeomorphism of triples
\[ \big( (S^3, K_1, \gamma_1) \smallsetminus (B_1,  B_1 \cap K_1, B_1 \cap \gamma_1) \big) \cup_f \big((S^3, K_2, \gamma_2) \smallsetminus (B_2,  B_2 \cap K_2, B_2 \cap \gamma_2)\big) \cong \big(S^3, K_1 \# K_2, \gamma_1 \# \gamma_2\big)\]  which defines the equivariant connected sum $K_1 \# K_2$.  This comes with a strong involution $\tau$ obtained from gluing $\tau_1$ and $\tau_2$, with fixed set $\gamma_1 \# \gamma_2$ and such that $\tau(K_1 \# K_2) = K_1 \# K_2$.

To define the direction on the connected sum, we take the orientation on $\gamma_1 \# \gamma_2$ induced by the orientations of $\gamma_1$ and $\gamma_2$, and we take the connected component of $\gamma_1 \# \gamma_2$ which contains the original preferred components of $\gamma_1$ and $\gamma_2$ (minus $B_1 \cap \gamma_1$ and $B_2 \cap \gamma_2$, respectively).

We call $(K, \tau)$ the \textit{equivariant connected sum} of $(K_1, \tau_1)$ and $(K_2, \tau_2)$.  Sakuma~\cite[\S 1]{Sakuma} proved that the equivariant isotopy class of $K_1 \# K_2$ does not depend on the choice of $f$ satisfying the above conditions.
\end{definition}

\begin{remark}
In particular note that we did not fix an orientation on $K_1$ nor on $K_2$.    Strongly invertible knots are reversible, so $K_1 \# K_2$ is isotopic to $K_1 \# K_2^r$.  As indicated in \cite[Figure~1.2]{Sakuma}, the two knots are moreover equivariantly isotopic, and so it is not necessary to choose orientations on the~$K_i$.
\end{remark}

\begin{definition}[Equivariant concordance]
 Let $(K_0,\tau_0)$ and $(K_1,\tau_1)$ be directed strongly invertible knots with axes $\gamma_0$ and $\gamma_1$ respectively.
\begin{enumerate}[(a)]
  \item  Suppose that there is a concordance between $K_0$ and $K_1$, i.e.\ there is a locally flat embedding $c \colon S^1 \times [0,1] \to S^3 \times [0,1]$ with $c(S^1 \times \{i\})= K_i$ for $i=0,1$. The image $C:= c(S^1 \times [0,1]$ is a proper submanifold of $S^3 \times [0,1]$.
  \item  Suppose also that there is an involution of $(S^3 \times [0,1],C)$ extending $\tau_0$ and $\tau_1$, that is an order two locally linear  homeomorphism $\wh{\tau} \colon S^3 \times [0,1] \to S^3 \times [0,1]$ such that $\wh{\tau}|_{S^3 \times \{i\}} = \tau_i \colon S^3 \times \{i\} \to S^3 \times \{i\}$ for $i=0,1$, and such that $\wh{\tau}(C) =C$.
  \item Let $A$ be the set of fixed points of $\wh{\tau}$. By Remark~\ref{remark:smith-thy} below, $A$ is a locally flat concordance between $\gamma_0$ and $\gamma_1$. Suppose that the chosen  connected components of $\gamma_0 \sm K_0$ and $\gamma_1 \sm K_1$ lie in the same connected component of  $A \sm C$, and that the orientations of $\gamma_0$ and $\gamma_1$ induce opposite orientations on $A$.
  \end{enumerate}
Then we say that $(K_0,\tau_0)$ and $(K_1,\tau_1)$ are \emph{directed equivariantly concordant}.
 \end{definition}

\begin{remark}\label{remark:smith-thy}%
 We explain why the fixed point set $A$  of $\Z/2$ acting on $S^3 \times [0,1]$ via the involution~$\wh{\tau}$ is an annulus.  Since $\wh{\tau}$ is locally linear, the fixed set $A$ is a submanifold of locally constant dimension. Let $(W,  f)= (S^3 \times I, \wh{\tau})\cup (D^4, \sigma)$,  where $\sigma$ is the standard extension of $\tau$ to $D^4$.

 Smith's theorem on finite group actions~\cite{Smith-I,Smith-II}  (see also~\cite[\textsection III,~Theorem~5.2]{Bredon-trans-gps} for a modern treatment) implies that the fixed set of $(W,f)$ is a $\Z/2$-homology ball, and in particular is connected. Since the fixed set restricts to $\gamma_0$ in the boundary $S^3$, it is a connected surface with boundary $\gamma_0$, and since it is a $\Z/2$ homology ball it must be homeomorphic to $D^2$.  Remove $(D^4, \sigma)$, and note that the fixed set of $\sigma$ is also a disc, with boundary $\gamma_1$, to see that the fixed set $A$ of $\wh{\tau}$ is indeed an annulus with $\partial A = \gamma_0 \cup -\gamma_1$.
\end{remark}

Now that we have a well-defined notion of equivariant connected sum and equivariant concordance, we can define the equivariant concordance group.
See also \cite{Sakuma}, and e.g.\ \cite[Section~2]{Boyle-Issa} and  \cite[Section~2.1]{DMS}.

\begin{definition}[Equivariant concordance group]
  The set of directed equivariant concordance classes of directed strongly invertible knots forms a group under equivariant connected sum.
  The inverse of the directed strongly invertible knot $(K,\tau)$ is the knot obtained by reversing the orientations of $S^3$, with the direction given by reversing the orientation of $\gamma$ and keeping the same preferred component.
  We denote the \emph{equivariant concordance group} by $\mathcal{C}^{SI}$.
\end{definition}

As explained in the upcoming proposition, the choice of directions do not affect the Alexander module nor the Blanchfield form of an equivariant connected sum, and therefore while they are necessary in order to define $\mathcal{C}^{SI}$, we will not need to focus on them in the rest of the paper.

\begin{proposition}\label{prop:connected-sum-well-behaved}
  Let  $(K_1,\tau_1)$ and $(K_2,\tau_2)$ be strongly invertible knots.  For any choice of directions on $K_1$ and $K_2$,
  the Alexander module and Blanchfield pairing of the equivariant sum $(K_1,\tau_1) \# (K_2,\tau_2)$
 is the direct sum $(\A(K_1) \oplus \A(K_2),\Bl_{K_1} \oplus \Bl_{K_2})$,
 and with respect to this identification the induced involution is $\left(\tau_{K_1 \# K_2}\right)_* =\left( \tau_{K_1}\right)_* \oplus \left(\tau_{K_2}\right)_*$.  The same holds for the integral versions $\A^\Z(K)$ and $\Bl^\Z_K$.
\end{proposition}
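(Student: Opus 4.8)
The plan is to realise the exterior of the equivariant connected sum as a gluing of the two knot exteriors along an annulus, to compute the Alexander module by Mayer--Vietoris, and then to check that the entire decomposition is carried out $\tau$-equivariantly; the Blanchfield statement then follows from orthogonality of the two summands.

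I would first record the standard decomposition $E_{K_1 \# K_2} = E_{K_1} \cup_Y E_{K_2}$, where $Y$ is an annular neighbourhood, in each of the two boundary tori, of a meridian, glued along the connect-sum sphere $S = \partial B_1 = \partial B_2$. The core circle of $Y$ is a meridian $\mu$ common to both sides. Since $\mu$ maps to the generator of $\langle t\rangle$, the restriction of the infinite cyclic cover to $Y$ is contractible, so $H_1(Y;\Lambda)=0$ and $H_0(Y;\Lambda)\cong \Lambda/(t-1)\cong\Q$, and the inclusion-induced map $H_0(Y;\Lambda)\to H_0(E_{K_1};\Lambda)\oplus H_0(E_{K_2};\Lambda)$ is the injective diagonal $\Q\to\Q\oplus\Q$. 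The Mayer--Vietoris sequence then degenerates to an isomorphism
\[ \A(K_1)\oplus\A(K_2)\;\xrightarrow{\ \cong\ }\;\A(K_1\#K_2).\]
The identical computation with $\Zt$-coefficients yields the integral statement.

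To upgrade this to an equivariant isomorphism I would use that the two points of $K_i\cap\gamma_i$ are fixed points of $\tau_i$, so by the arrangement in Definition~\ref{defn:connsum} one may take $\tau_i(B_i)=B_i$, and the condition $\tau_1\circ f=\tau_2$ guarantees that the glued involution $\tau$ is well defined and preserves $S$, hence preserves $Y$ and carries each $E_{K_i}$ to itself as $\tau_i$. The orientation-reversing self-maps $\rho$ are induced by the identity of $S^3$ and so also respect the gluing. By naturality of the Mayer--Vietoris isomorphism, it then intertwines $(\tau_{K_1\#K_2})_*$ with $(\tau_{K_1})_*\oplus(\tau_{K_2})_*$. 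Because the argument uses only the decomposition and the fact that $\tau$ restricts to $\tau_i$ on $E_{K_i}$, it is insensitive to the chosen directions, which is what gives the statement for any choice of directions.

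For the Blanchfield pairing I would show the decomposition is orthogonal: a class in the image of $\A(K_1)$ has a representative cycle in $E_{K_1}$, while one in the image of $\A(K_2)$ bounds, after clearing denominators, a chain in $E_{K_2}$, so their pairing, computed as a linking number via the duality definition of $\Bl$, vanishes by disjointness of supports; hence $\Bl_{K_1\#K_2}=\Bl_{K_1}\oplus\Bl_{K_2}$, and likewise over $\Zt$. Alternatively this additivity follows from the block-sum formula for Seifert matrices under connected sum together with the Seifert-matrix presentation of the Blanchfield form in \cite{Kearton,Friedl-Powell-Moscow}. I expect the main obstacle to be the bookkeeping in the equivariance step: verifying that $\tau$ restricts to each piece as $\tau_i$ (and neither interchanges the two summands nor the two fixed points), and that the $\rho$-twist in the definition of $\tau_*$ is matched on both sides, so that the Mayer--Vietoris isomorphism is equivariant on the nose rather than merely up to an automorphism of each summand.
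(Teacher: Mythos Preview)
Your proposal is correct and follows essentially the same approach as the paper: decompose $E_{K_1\#K_2}$ as $E_{K_1}\cup E_{K_2}$ glued along a thickened meridian, use Mayer--Vietoris with the vanishing of $H_1(\mu;\Lambda)$ and $H_0(\mu;\Lambda)\cong\Q$ to obtain the direct sum of Alexander modules, observe that $\tau$ restricts to $\tau_i$ on each piece to get the equivariance, and appeal to the Seifert-matrix presentation of the Blanchfield form (via \cite{Kearton,Friedl-Powell-Moscow}) for additivity. Your direct linking-number argument for orthogonality of the Blanchfield summands is a slight elaboration beyond what the paper writes, but the paper also points to \cite{FLNP} as an alternative route, so both treatments are in the same spirit.
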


\begin{proof}
The exterior of $K_1 \# K_2$ can be obtained by gluing the exteriors of $K_1$ and $K_2$ along a thickened oriented meridian for each (or, to use the perspective of Definition~\ref{defn:connsum},   gluing the exterior of the knotted arc for $K_1$ to that of the knotted arc for $K_2$).
A thickened meridian $\mu$ has $H_i(\mu;\Lambda) =0$ for $i \geq 1$, and $H_0(\mu;\Lambda) \cong \Q$ so the Alexander modules add by a Mayer-Vietoris argument. Since $\tau_{K_1 \# K_2}$ is defined by gluing  $\tau_1$ on $E_{K_1}$ and $\tau_2$ on $E_{K_2}$, it follows that it induces  $\left( \tau_{K_1}\right)_* \oplus \left(\tau_{K_2}\right)_*$ on $\A(K_1) \oplus \A(K_2) \cong \A(K_1 \# K_2)$.  It is well-known that the Blanchfield pairing of a connected sum is the direct sum as claimed. For example one can see this using the fact that the Seifert forms  add in this way, and that the Blanchfield pairing can be computed using the Seifert pairing~\cite{Kearton} (see also \cite{Friedl-Powell-Moscow}). Alternatively one can apply~\cite{FLNP}.
\end{proof}

\section{Computations of the Blanchfield pairing}\label{section-Bl-computations}

In this section we explicitly compute the Blanchfield pairing for specific families of strongly invertible knots, in particular for every genus one algebraically slice knot.   We will make use of these computations in the proofs of our main results.  To avoid interrupting the arguments later, and to be able to appeal to these computations in Sections~\ref{section:equiv-alg-conc} and \ref{section:lower-bound},  we collect these computations first  here.

The following proposition can be deduced by combining ~\cite[Theorems~1.3~and~1.4]{Friedl-Powell-Moscow}, and by passing from $\Zt$ to $\Qt$ coefficients.

\begin{proposition}[Friedl-Powell]\label{prop:alexandblanch}
Let $F$ be a Seifert surface for a knot $K$ with a collection of simple closed curves $\alpha_1,\dots, \alpha_{2g}$ on $F$ that form a basis for $H_1(F; \Z)$, and let $A$ be the corresponding Seifert matrix.
Let $\beta_1,\dots, \beta_{2g}$ be a dual basis for $H_1(S^3, F; \Z)$, i.e.\  a basis such that $\lk(\alpha_i, \beta_j)= \delta_{i,j}$.
Using the standard decomposition of  $(S^3 \smallsetminus \nu(K))_\infty= \cup_{j=-\infty}^{\infty}(S^3 \smallsetminus \nu(F))_j$,  let the homology class of the unique lift of $\beta_i$ to $(S^3 \smallsetminus \nu(F))_0$ be denoted by $b_i$. Then the map $p \colon ( \Q[t^{\pm1}])^{2g} \to \mathcal{A}(K)$ given by $p(x_1, \dots, x_{2g})= \sum_{i=1}^{2g} x_i b_i$ is a surjective map with kernel $(tA-A^T)  ( \Q[t^{\pm1}])^{2g}$.  Moreover, for $x, y \in \Q[t^{\pm1}]^{2g}$  the rational Blanchfield pairing is given by
\[ \Bl(p(x), p(y))= (t-1)x^T (A-tA^T)^{-1} \ol{y},\]
where $\bar{\cdot}$ is the component-wise extension of the $\Q$-linear involution on $\Q[t^{\pm1}]$ sending $t^i$ to $t^{-i}$.
\end{proposition}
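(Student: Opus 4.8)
The plan is to deduce both assertions from the integral statements of Friedl--Powell over $\Zt$ by base-changing along the inclusion $\Zt \hookrightarrow \Qt$. The key structural observation is that $\Qt = S^{-1}\Zt$ for the multiplicative set $S = \Z \smallsetminus \{0\}$ of nonzero integers, so that $\Qt$ is a flat (indeed a localising) $\Zt$-module, and the integral results will transport with almost no extra work.

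First I would record the integral presentation. By \cite[Theorem~1.3]{Friedl-Powell-Moscow}, the lifted dual curves $b_i$ assemble into a surjection $p^\Z \colon (\Zt)^{2g} \to \A^\Z(K)$, $p^\Z(x) = \sum_i x_i b_i$, with kernel $(tA-A^T)(\Zt)^{2g}$. Since $\det(tA - A^T)$ equals the Alexander polynomial up to a unit and hence is a nonzero element of the domain $\Zt$, the matrix $tA - A^T$ acts injectively, so this presentation is in fact a short exact sequence
\[ 0 \longrightarrow (\Zt)^{2g} \xrightarrow{\ tA - A^T\ } (\Zt)^{2g} \xrightarrow{\ p^\Z\ } \A^\Z(K) \longrightarrow 0. \]
Likewise \cite[Theorem~1.4]{Friedl-Powell-Moscow} gives the integral Blanchfield pairing by the stated matrix formula, now with values in $\Q(t)/\Zt$.

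Next I would base-change to $\Qt$. By definition $\Lambda \otimes_{\Z[\pi_1(E_K)]} C_*(\wt{E}_K) = \Q \otimes_\Z \big(\Zt \otimes_{\Z[\pi_1(E_K)]} C_*(\wt{E}_K)\big)$, so flatness of $\Q$ over $\Z$ gives $\A(K) \cong \Qt \otimes_{\Zt} \A^\Z(K) = S^{-1}\A^\Z(K)$. Tensoring the short exact sequence above with the flat module $\Qt$ preserves exactness, so $p = \Qt \otimes_{\Zt} p^\Z$ is surjective with kernel exactly $(tA - A^T)(\Qt)^{2g}$. This proves the first assertion.

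For the Blanchfield formula I would argue by naturality of the defining chain of isomorphisms $\Theta$ under the coefficient change $\Zt \to \Qt$: Poincar\'{e} duality, the Bockstein associated to $0 \to \Zt \to \Q(t) \to \Q(t)/\Zt \to 0$, and the evaluation map are all natural in the coefficient ring, localisation is exact with $S^{-1}(\Q(t)/\Zt) = \Q(t)/\Qt$, and $\A^\Z(K)$ is finitely presented over the Noetherian ring $\Zt$ so that $S^{-1}\Hom_{\Zt}(\A^\Z(K), \Q(t)/\Zt) = \Hom_{\Qt}(\A(K), \Q(t)/\Qt)$. It follows that $\Bl_K$ is the localisation $S^{-1}\Bl^\Z_K$, and since localisation alters only the submodule modulo which the expression is reduced, not the $\Q(t)$-valued expression $(t-1)x^T(A - tA^T)^{-1}\ol{y}$ itself, the integral formula descends verbatim to the claimed rational one. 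I expect this comparison to be the only genuinely non-formal point: the module-theoretic statements are immediate from flatness of $\Q$ over $\Z$, whereas here one must check that the topologically defined rational form really does agree with the localisation of the integral form, i.e.\ that each arrow in the chain defining $\Theta$ commutes with the passage $\Zt \to \Qt$ and with localisation of $\Hom$.
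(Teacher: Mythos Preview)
Your proposal is correct and matches the paper's approach exactly: the paper does not give a proof but simply states that the proposition ``can be deduced by combining~\cite[Theorems~1.3~and~1.4]{Friedl-Powell-Moscow}, and by passing from $\Zt$ to $\Qt$ coefficients,'' which is precisely the base-change argument you have written out. Your details on flatness of $\Qt$ over $\Zt$ and the naturality of the chain of isomorphisms defining $\Theta$ under localisation are a faithful expansion of that one-line deduction.
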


We will use the following elementary fact to verify that certain elements of $\Q(t)/\Qt$ are nonzero.

\begin{lemma}\label{lemma:elem-fact-coprime}
  Let $p,q \in \Qt$ be coprime. Then $\smfrac{a}{p} + \smfrac{b}{q} \in \Qt \subseteq \Q(t)$ if and only if both $\smfrac{a}{p}$ and $\smfrac{b}{q}$ belong to $\Qt$.
\end{lemma}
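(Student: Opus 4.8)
The plan is to reduce the statement to a divisibility assertion in the principal ideal domain $\Lambda = \Qt$ and then apply Euclid's lemma. Recall that $\Qt$ is the localisation of $\Q[t]$ at the multiplicative set $\{t^k\}_{k \geq 0}$, hence a PID and in particular a UFD. Here coprimality of $p$ and $q$ means $\gcd(p,q)=1$ up to units, equivalently that $p$ and $q$ share no common irreducible factor, which is precisely the hypothesis needed to run Euclid's lemma.

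The forward implication is immediate: if $\smfrac{a}{p}$ and $\smfrac{b}{q}$ both lie in $\Qt$, then so does their sum, since $\Qt$ is a ring. For the reverse implication I would first clear denominators. Suppose $\smfrac{a}{p} + \smfrac{b}{q} = c$ for some $c \in \Qt$. Multiplying through by $pq$ yields the identity $aq + bp = cpq$ in $\Qt$. Rearranging gives $aq = p(cq - b)$, so $p \mid aq$; since $p$ and $q$ are coprime in the UFD $\Qt$, Euclid's lemma forces $p \mid a$, that is, $\smfrac{a}{p} \in \Qt$. The symmetric manipulation $bp = q(cp - a)$ shows $q \mid b$, hence $\smfrac{b}{q} \in \Qt$, completing the proof.

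There is essentially no serious obstacle here; the only thing worth recording carefully is that $\Qt$ is a UFD, so that coprimality of $p$ and $q$ genuinely yields the implication $p \mid aq \Rightarrow p \mid a$. One minor point of care is that $a$ and $b$ are arbitrary elements of $\Qt$ with no assumed coprimality to $p$ or $q$, but the divisibility argument above never needs such an assumption, so this causes no trouble.
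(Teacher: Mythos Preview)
Your proof is correct and follows essentially the same approach as the paper: clear denominators, deduce $p \mid aq$, and invoke coprimality of $p$ and $q$ in the PID $\Qt$ to conclude $p \mid a$, then finish by symmetry. The only cosmetic difference is that the paper phrases the divisibility step as $p \mid aq + bp \Rightarrow p \mid aq$, whereas you rearrange to $aq = p(cq - b)$; these are equivalent.
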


\begin{proof}
  The if direction is trivial.  If $\smfrac{a}{p} + \smfrac{b}{q} \in \Qt$ then $\smfrac{aq+bp}{pq}\in \Qt$, so $p \mid aq+bp$, which implies $p\mid aq$. So $p\mid a$ since $p$ and $q$ are coprime. Therefore $\smfrac{a}{p} \in \Qt$. By symmetry this suffices.
\end{proof}

\begin{example}[The knot $9_{46}$]\label{example:946}
\begin{figure}[h!]
\begin{picture}(200,100)
\put(0,0){\includegraphics[height=4cm]{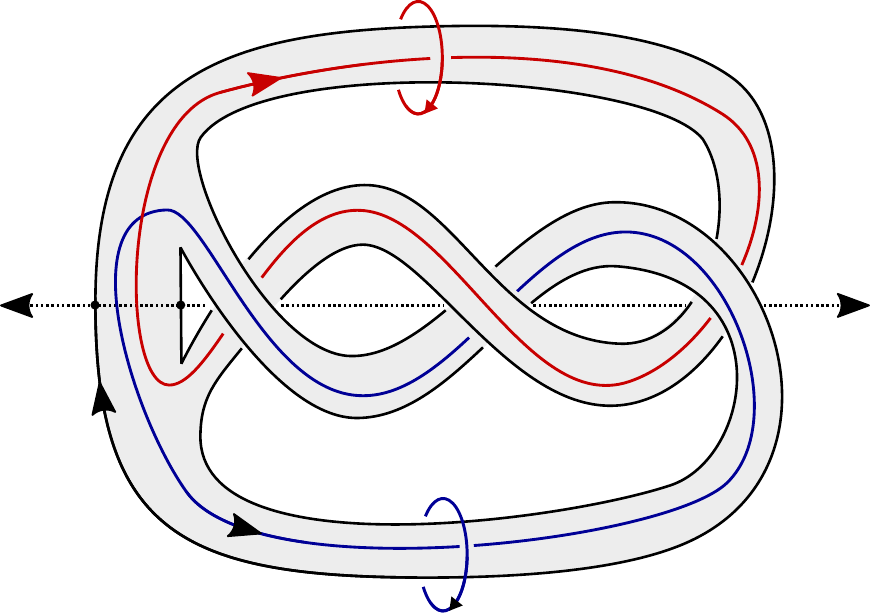}}
\put(148,72){$\alpha_1$}
\put(148, 32){$\alpha_2$}
\put(78,82){$\beta_1$}
\put(78,24){$\beta_2$}
\end{picture}
\caption{The knot $K=9_{46}$ with an axis of inversion and a Seifert surface.}
\label{fig:946}
\end{figure}
The knot $9_{46}$ is shown in Figure~\ref{fig:946}.
Denote the depicted generators for $H_1(F)$ by $\alpha_1$  and $\alpha_2$  and the depicted dual generating set for $H_1(S^3 \smallsetminus F)$ by $\beta_1$ and $\beta_2$.  The Alexander matrix for $F$ with respect to this basis is given by $A= \bsm 0 & 2 \\ 1 & 0 \esm $.
By Proposition~\ref{prop:alexandblanch},  we therefore have that
\[ \mathcal{A}(K) \cong \Q[t^{\pm1}] / \langle t-2 \rangle \oplus \Q[t^{\pm1}] / \langle 2t-1 \rangle,\]
where the first summand is generated by $b_1$ and the second summand by $b_2$.
Observe that since $\tau(\beta_1)= \beta_2$, and $\tau(\beta_2)=\beta_1$  we have that $\tau_* \colon  \mathcal{A}(K) \to  \mathcal{A}(K)$ sends $b_1$ to $ b_2$ and $b_2$ to $b_1$.  More precisely, for any $p_1(t), p_2(t) \in \Q[t^{\pm1}]$, we have that
\begin{align*}
\tau_*(p_1(t) b_1+ p_2(t) b_2)= p_2(t^{-1}) b_1 +p_1(t^{-1}) b_2.
\end{align*}

Note that every element $x$ of $\mathcal{A}(K)$ can be written as $x=c_1 b_1+ c_2 b_2$ for some $c_1, c_2 \in \mathbb{Q}$, since as abelian groups $\Qt/(t-2) \cong \Q \cong\Qt/(2t-1)$.
We now compute $\Bl(x, \tau_*(x))$ using Proposition~\ref{prop:alexandblanch}:
\begin{align*}
\Bl(x, \tau_*(x))&= \Bl(c_1b_1+c_2b_2,  c_2b_1+c_1b_2)\\
&= (t-1) \begin{pmatrix} c_1 & c_2 \end{pmatrix} \begin{pmatrix} 0 & 2-t \\ 1-2t & 0 \end{pmatrix}^{-1} \begin{pmatrix} c_2 \\ c_1 \end{pmatrix} \\
&= \smfrac{-(t-1)}{(2t-1)(t-2)} \begin{pmatrix} c_1 & c_2 \end{pmatrix} \begin{pmatrix} 0 & t-2 \\ 2t-1 & 0 \end{pmatrix} \begin{pmatrix} c_2 \\ c_1 \end{pmatrix} \\
&= -(t-1) \Big( \smfrac{(c_1)^2}{2t-1} + \smfrac{(c_2)^2}{t-2} \Big).
\end{align*}
By the additivity of the Blanchfield pairing under connected sum,  it is straightforward to obtain the Blanchfield pairing of the connected sum $\#^n 9_{46}$. Applying Lemma~\ref{lemma:elem-fact-coprime}, we deduce that $\Bl(x, \tau_*(x))=0$ if and only if $c_1=c_2=0$, i.e.\ $x=0$.
\end{example}

We can generalize Example~\ref{example:946} to the following result.

\begin{proposition}\label{prop:genusone}
Let $K$ be a genus one algebraically slice knot with strong inversion $\tau$ and nontrivial Alexander polynomial.
For each $n \in \mathbb{N}$,  let $(K_n, \tau_n):=\#^n (K, \tau)$.
For every nonzero $x \in \A(K_n)$ we have that $\Bl(x, (\tau_n)_*(x)) \neq 0$.
\end{proposition}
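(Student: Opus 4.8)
The plan is to reduce to the case $n=1$ using the additivity of Proposition~\ref{prop:connected-sum-well-behaved}, to decompose the Alexander module into the two $\Q$-eigenspaces cut out by the factors of the Alexander polynomial, and then to extract a sum of squares whose vanishing is controlled by the coprimality Lemma~\ref{lemma:elem-fact-coprime} together with positivity over $\Q$. I would work intrinsically through the eigenspace decomposition rather than through a Seifert basis as in Example~\ref{example:946}, precisely so that the argument applies to an arbitrary strong inversion.

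First I would record the algebraic shape of $\A(K)$ and $\Bl_K$. Algebraic sliceness means the genus one Seifert form is metabolic, so after a change of basis the Seifert matrix is $A = \bsm 0 & b \\ c & d\esm$ with $b-c = \pm 1$, and nontriviality of $\Delta_K$ forces $b,c \neq 0$. By Proposition~\ref{prop:alexandblanch} the order of $\A(K)$ equals $\det(tA-A^T) = -(bt-c)(ct-b)$, which up to units is $p\bar p$ for the linear polynomial $p = bt-c$. Since $b-c=\pm 1$ with $b,c$ nonzero integers we have $c \neq \pm b$, so the roots $c/b$ and $b/c$ are distinct and $p,\bar p$ are coprime. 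Hence $\A(K)$ is $(p\bar p)$-torsion and splits as a primary decomposition $\A(K) = S_1 \oplus S_2$, where $S_1 = \ker(p)$ and $S_2 = \ker(\bar p)$ are $1$-dimensional $\Q$-eigenspaces on which $t$ acts by $c/b$ and $b/c$ respectively.

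Next I would establish three structural facts. (i) Each $S_j$ is isotropic for $\Bl_K$: for $x,y \in S_1$ the value $\Bl_K(x,y)$ is killed by both $p$ and $\bar p$, hence lies in $\Qt$ by a Bézout argument in the spirit of Lemma~\ref{lemma:elem-fact-coprime}, so vanishes in $\Q(t)/\Qt$; likewise for $S_2$. (ii) The inversion-induced map $\tau_*$ swaps $S_1$ and $S_2$: this is forced by $\tau_*$ being an anti-automorphism (Proposition~\ref{prop:anti-isometry}), since $\tau_*(tx)=t^{-1}\tau_*(x)$ carries the $t=c/b$ eigenspace to the $t=b/c$ eigenspace, the eigenvalues being distinct. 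The key point is that this swap is automatic, so I never need an explicit formula for $\tau_*$. (iii) Choosing generators $g_1 \in S_1$, $g_2 \in S_2$, I set $\gamma := \Bl_K(g_1,g_2)$, which is nonzero by nonsingularity, and write $\tau_* g_1 = \lambda g_2$, $\tau_* g_2 = \mu g_1$ with $\lambda,\mu \in \Q^\times$. Representing $\gamma = r/p$ and $\bar\gamma = \Bl_K(g_2,g_1) = r'/\bar p$ with $r,r' \in \Q^\times$ (possible as $p,\bar p$ are linear), a direct sesquilinear computation using isotropy gives $\Bl_K(c_1 g_1 + c_2 g_2,\, \tau_*(c_1 g_1 + c_2 g_2)) = c_1^2 \lambda\gamma + c_2^2 \mu\bar\gamma$ for $c_1,c_2 \in \Q$.

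Finally I would assemble over the $n$ factors. By Proposition~\ref{prop:connected-sum-well-behaved} the pairing, the involution, and hence this whole eigenspace picture for $\A(K_n)$ are the orthogonal direct sum of $n$ identical copies. Writing a general $x$ in the per-factor generators with coefficients $c_1^{(i)}, c_2^{(i)} \in \Q$, the cross-factor terms vanish and I obtain
\[ \Bl(x, (\tau_n)_*(x)) = \frac{\lambda r}{p}\sum_{i=1}^n \big(c_1^{(i)}\big)^2 + \frac{\mu r'}{\bar p}\sum_{i=1}^n \big(c_2^{(i)}\big)^2. \]
Since $p$ and $\bar p$ are coprime, Lemma~\ref{lemma:elem-fact-coprime} forces each summand into $\Qt$ separately; as $p,\bar p$ are non-units and the numerators are constants, this requires $\lambda r\sum_i (c_1^{(i)})^2 = 0 = \mu r'\sum_i (c_2^{(i)})^2$. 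The step I expect to carry the real content is positivity: a sum of squares of rationals vanishes only if every term does. Hence all $c_1^{(i)} = c_2^{(i)} = 0$, i.e. $x=0$, which is the contrapositive of the claim. The main obstacle is organizational rather than computational, namely arranging the structural facts to hold for \emph{any} strong inversion; this is exactly why I route everything through the intrinsic eigenspace decomposition, letting the coprimality of $p,\bar p$ and the sum-of-squares positivity do the work.
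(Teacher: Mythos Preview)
Your argument is correct and follows essentially the same strategy as the paper's proof: decompose $\A(K)$ into the two one-dimensional $t$-eigenspaces corresponding to the coprime linear factors of $\Delta_K$, observe abstractly (via the anti-automorphism property) that $\tau_*$ must swap them, show each is isotropic, and extract a sum-of-squares expression with denominators $p$ and $\bar p$ to which Lemma~\ref{lemma:elem-fact-coprime} applies. The only real difference is stylistic: where the paper fixes a Seifert matrix with $\ell\neq 0$ and computes $\Bl_K(y_i,y_j)$ explicitly via Proposition~\ref{prop:alexandblanch}, you prove the isotropy of the eigenspaces and the shape $r/p$ of the cross-pairing by a module-theoretic annihilation argument, which is slightly cleaner and avoids the preliminary basis change.
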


\begin{proof}
We begin by constraining the action of $\tau_*$ on $\A(K)$.
Since $K$ is algebraically slice and genus one, it has some Seifert surface $F$ and a basis for $H_1(F)$ with respect to which its    Seifert matrix is $A= \bsm 0 & m+1 \\ m & \ell \esm$ for some $m,\ell \in \Z$.   By further change of basis of $H_1(F)$,  we can assume that $\ell \neq 0$.

Using $tA-A^T$ to present $\A(K)$ as in Proposition~\ref{prop:alexandblanch},  we see that $\A(K)$ is generated as a $\Q[t^{\pm1}]$ module by $b_1,b_2$ subject to the relations
\begin{align*}
0&=(mt-(m+1))b_2 \text{ and }\\
0&=((m+1)t-m)b_1+\ell(t-1)b_2.
\end{align*}
Adding $-\smfrac{\ell}{m}$ times the first equation to the second and solving for $b_2$ gives us that
\[b_2= \smfrac{m}{\ell} ((m+1)t-m))b_1,\]
and hence that $\A(K)$ is cyclic with generator $b_1$.  Since the order of $\A(K)$ is exactly the Alexander polynomial, which is given by $\det(tA-A^T)$,  we obtain that
\[\A(K) \cong \Q[t^{\pm1}]/ \langle (mt-(m+1))((m+1)t-m) \rangle,\]
with $b_1$ as a generator.  Since $K$ has nontrivial Alexander polynomial we know that $m, (m+1) \neq 0$.
Let $y_1 := ((m+1)t-m) b_1$ and $y_2 :=(mt-(m+1))b_1$.  Observe that $ty_1= \smfrac{m+1}{m} y_1$ and $ty_2= \smfrac{m}{m+1} y_2$.

Now recall that $\tau_* \colon \A(K) \to \A(K)$ is a $\Q$-linear map satisfying $\tau_*(t x)= t^{-1} \tau_*(x)$ and $\tau_*(\tau_*(x))=x$ for all $x \in \A(K)$.
Since $y_1$ and $y_2$ generate $\A(K)$ as a $\Q$-module,  we can write $\tau_*(y_1)=c_1y_1+c_2y_2$ and $\tau_*(y_2)=d_1y_1+d_2y_2$ for some $c_1,c_2,d_1,d_2 \in \Q$.
Observe that
\[\tau_*(ty_1)= \tau_*\Big( \smfrac{m+1}{m} y_1\Big)= \smfrac{m+1}{m} \tau_*(y_1)= \smfrac{c_1(m+1)}{m} y_1 + \smfrac{c_2(m+1)}{m} y_2
\]
and
\[ t^{-1} \tau_*(y_1)= c_1t^{-1} y_1+c_2t^{-1}y_2
= \smfrac{c_1m}{m+1} y_1 + \smfrac{c_2(m+1)}{m} y_2.
\]
Since $\tau_*(ty_1)=t^{-1}\tau_*(y_1),$, it follows that $c_1=0$,  and an analogous argument using $\tau_*(ty_2)=t^{-1} \tau_*(y_2)$ shows that $d_2=0$ as well.
Since $(\tau_*)^2= \id$,  we can also conclude that $c_2d_1=1$.  So let $c:= c_2$,  and observe that we have shown that $\tau_*(y_1)=cy_2$ and $\tau_*(y_2)= \smfrac{1}{c} y_1$ for some nonzero $c \in \Q$.

We now compute $\Bl_K(y_1,y_1)$,  $\Bl_K(y_1,y_2)$, $\Bl_K(y_2,y_1)$, and $\Bl_K(y_2,y_2)$,  relying on Proposition~\ref{prop:alexandblanch}.
Observe that
\[(A-tA^T)^{-1}
= \smfrac{-1}{\Delta_K(t)} \begin{pmatrix}
\ell(1-t) & mt- (m+1)\\ (m+1)t -m &0
  \end{pmatrix}
\]
where $\Delta_K(t)=(mt-(m+1))((m+1)t-m)$.
Therefore, $\Bl_K(b_1,b_1)= \smfrac{-\ell(1-t)^2}{\Delta_K(t)}$.
Using the fact that $\Bl_K(p(t)b_1, q(t)b_2)= p(t)q(t^{-1})\Bl_K(b_1,b_2)$ we therefore compute:
\begin{align*}
\Bl_K(y_1,y_1)&= ((m+1)t-m)((m+1)t^{-1}-m)\smfrac{-\ell(1-t)^2}{\Delta_K(t)}=0 \in \Q(t)/ \Q[t^{\pm1}]\\
\Bl_K(y_1,y_2)&=-\ell t^{-1} (1-t)^2 \smfrac{ (m+1)t-m}{mt-(m+1)}\\
\Bl_K(y_2,y_1)&=-\ell t^{-1} (1-t)^2 \smfrac{mt-(m+1)}{(m+1)t-m} \\
\Bl_K(y_2,y_2)&=0.
\end{align*}

Now,  let $v=(v_1,\dots,v_n)$ be any element of $\A(K_n)= \oplus^n \A(K)$.  Each $v_i= \lambda_i y_1^i + \mu_i y_2^i$ for some $\lambda_i, \mu_i \in \mathbb{Q}$.
We can then compute
\begin{align*}
\Bl_{K_n}(v, (\tau_n)_*(v))&= \sum_{i=1}^n \Bl_K(v_i, \tau_*(v_i)) \\
&= \sum_{i=1}^n \Bl_K\big( \lambda_i y_1+ \mu_i y_2 , \smfrac{\mu_i}{c} y_1 + \lambda_i c y_2 \big)\\
&=\sum_{i=1}^n \Big[ \Bl_K\big( \lambda_i y_1,  \lambda_i c y_2 \big)+ \Bl_K\big( \mu_i y_2,  \smfrac{\mu_i}{c} y_1 \big)\Big]\\
&=\sum_{i=1}^n \Big[ c (\lambda_i)^2 \Bl_K\big(y_1, y_2\big)+ \smfrac{(\mu_i)^2}{c}\Bl_K\big(y_2,  y_1\big)\Big]\\
&= -\ell t^{-1}(1-t)^2 \bigg[ \Big( c \sum_{i=1}^n (\lambda_i)^2 \Big) \smfrac{ (m+1)t-m}{mt-(m+1)}
+ \Big( \smfrac{\sum_{i=1}^n (\mu_i)^2}{c} \Big) \smfrac{mt-(m+1)}{(m+1)t-m}
\bigg].
\end{align*}
Applying Lemma~\ref{lemma:elem-fact-coprime}, this expression equals 0 in $\Q(t)/ \Q[t^{\pm1}]$ exactly when $c \sum_{i=1}^n (\lambda_i)^2=0= \frac{1}{c} \sum_{i=1}^n (\mu_i)^2$,  which occurs exactly when $\lambda_i=0=\mu_i$ for all $i=1, \dots, n$,  i.e.\ exactly when $v=0 \in \A(K_n)$.
\end{proof}

\section{Equivariant algebraic concordance}\label{section:equiv-alg-conc}

In this section we define an equivariant algebraic concordance group $\mathcal{AC}^{SI}$, we define a homomorphism $\Psi \colon \mathcal{C}^{SI} \to \mathcal{AC}^{SI}$, and we use the equivariant algebraic concordance group to show that the kernel of the forgetful map $F \colon \mathcal{C}^{SI} \to \mathcal{C}$ is infinite rank.

\subsection{An equivariant slice obstruction}\label{subsec-equiv-slice-obstruction}

We begin by proving the following obstruction to equivariant sliceness. This is presumably already known to experts,  but we could not find it in the literature.  Given a $\Qt$-module $H$ we write $|H|$ for its \emph{order}, which is an element of $\Qt$ well-defined up to multiplication by $\alpha t^k$, i.e. by units in $\Qt$.

We remind the reader that a strongly invertible knot $(K,  \tau)$ is \emph{equivariantly slice} if there exists a slice disc $D$ for $K$ and an extension of $\tau$ to a locally linear, order two homeomorphism $\wh{\tau}$ of $D^4$ such that $D =\wh{\tau}(D)$.
Unlike in our definition of concordance, we do not need to specify a direction on $K$.

\begin{proposition}\label{prop:equivslice}
Let $(K, \tau)$ be a strongly invertible knot.  If $(K, \tau)$ is equivariantly slice then there exists a submodule $P \leq \A^\Z(K)$ such that the following hold.
\begin{enumerate}
\item $P$ is a metabolizer for the integral Blanchfield pairing, i.e.\
\begin{enumerate}
\item for all $x,y \in P$ we have $\Bl^\Z(x,y)=0$;
\item $|P| \cdot \overline{|P|}= |\A(K)|$.
\end{enumerate}
\item $P$ is $\tau_*$-invariant, i.e.\ $\tau_*(P)=P$.
\end{enumerate}
The same holds with $\Lambda$ coefficients, for $\A(K)$ and the rational Blanchfield pairing $\Bl$.
\end{proposition}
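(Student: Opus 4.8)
The plan is to take $P$ to be the kernel of the map on Alexander modules induced by including the boundary into a $\tau$-equivariant slice disc exterior, and then to read off both the metabolizer property and the $\tau_*$-invariance from this single description. Concretely, suppose $(K,\tau)$ is equivariantly slice, witnessed by a slice disc $D \subseteq D^4$ and a locally linear involution $\wh{\tau}\colon D^4 \to D^4$ with $\wh{\tau}(D) = D$ extending $\tau$. Let $W := D^4 \smallsetminus \nu(D)$ be the exterior of the disc. A standard computation identifies $\partial W$ with the zero-surgery $M_K$ and gives $H_1(W;\Z) \cong \Z$, generated by the image of the meridian, so that the coefficient systems $\Lambda$ and $\Zt$ extend over $W$ and restrict under inclusion to those on $M_K$; one also checks in the usual way that $H_1(W;\Lambda)$ is $\Lambda$-torsion. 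I then set
\[ P := \ker\big(i_* \colon H_1(M_K;\Lambda) \to H_1(W;\Lambda)\big), \]
and analogously with $\Zt$ coefficients. This is the candidate submodule.

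For part (1), I would invoke the classical fact that the kernel of $i_*$ is a metabolizer for the Blanchfield pairing. Isotropy (1)(a) follows because $\Bl$ can be computed through Poincar\'e--Lefschetz duality on $W$ together with the long exact sequence of the pair $(W, M_K)$, so that two classes dying in $W$ pair trivially. The order condition (1)(b) is the ``half-lives-half-dies'' principle for $W$ combined with the nonsingularity of $\Bl$, which forces $P = P^\perp$ and hence $|P| \cdot \overline{|P|} = |\A(K)|$. Over $\Lambda = \Qt$, a PID, this is routine; over $\Zt$ the duality and universal-coefficient arguments are more delicate, and I would cite the careful treatments in the literature (see e.g.\ \cite{Kearton, LevineKnotModules, Blanchfield-MP, FLNP} and the references therein) rather than reprove them.

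The genuinely equivariant input is part (2), and this is where the hypothesis $\wh{\tau}(D) = D$ enters. Restricting $\wh{\tau}$ to $W$ gives a self-homeomorphism $\Phi \colon W \to W$ whose restriction to $\partial W = M_K$ is the boundary involution inducing $\tau_*$. Since $\wh{\tau}$ inverts the meridian --- equivalently, $\Phi$ acts on $H_1(W;\Z) \cong \Z$ by $t \mapsto t^{-1}$ --- it lifts to the infinite cyclic cover intertwining the deck transformation with its inverse, and therefore induces an anti-automorphism $\Phi_*$ of $H_1(W;\Lambda)$ fitting into the commutative square
\[\begin{tikzcd}
  H_1(M_K;\Lambda) \arrow[r,"i_*"] \arrow[d,"\tau_*"'] & H_1(W;\Lambda) \arrow[d,"\Phi_*"] \\
  H_1(M_K;\Lambda) \arrow[r,"i_*"] & H_1(W;\Lambda).
\end{tikzcd}\]
Commutativity holds because both $\tau_*$ and $\Phi_*$ are induced by the single map $\wh{\tau}$ and its boundary restriction, while $i$ is the $\wh{\tau}$-equivariant inclusion of the boundary. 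Consequently, if $x \in P$ then $i_*(\tau_* x) = \Phi_*(i_* x) = 0$, so $\tau_*(P) \subseteq P$; as $\tau_*$ is an involution, $\tau_*(P) = P$. The same argument applies verbatim with $\Zt$ coefficients.

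I expect the main obstacle to be bookkeeping rather than conceptual. One must verify that the point-map $\wh{\tau}|_{M_K}$ really induces the anti-automorphism $\tau_* = (\rho_K^{-1} \circ \tau)_*$ of Section~\ref{section:Bl-form} on the boundary, i.e.\ that the orientation reversal encoded in $\rho_K$ is exactly accounted for by the action $t \mapsto t^{-1}$ of $\Phi$ on $H_1(W;\Z)$; getting this identification right is what makes $\Phi_*$ an anti-automorphism and the square commute. The only other delicate point is the integral order statement (1)(b), where half-lives-half-dies over $\Zt$ is subtler than over the PID $\Qt$, and there I would lean on the existing references and keep the written proof focused on the equivariance.
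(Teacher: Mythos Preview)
Your approach is essentially the paper's: define $P$ via the kernel of the inclusion-induced map into the slice disc exterior, cite the classical metabolizer result for (1), and read off (2) from naturality of the inclusion under $\wh{\tau}$. The paper's proof of (2) is exactly your commutative-square argument, written more tersely.

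There is one small but genuine point you miss in the integral case. Over $\Zt$ the paper does \emph{not} take $P$ to be the raw kernel $P' := \ker\big(H_1(M_K;\Zt) \to H_1(E_D;\Zt)\big)$; rather it takes the $\Z$-saturation
\[
P := \{\, p \in H_1(M_K;\Zt) \mid kp \in P' \text{ for some } k \in \Z\setminus\{0\}\,\},
\]
and it is this $P$ that the cited references (Friedl, Hillman) show satisfies the order condition (1)(b). The issue is that $H_1(E_D;\Zt)$ can have $\Z$-torsion, so $P'$ need not equal its own orthogonal complement. Your invariance argument adapts immediately: you first get $\tau_*(P') = P'$ as you wrote, and then $\tau_*(P) = P$ follows because $\tau_*$ is $\Z$-linear. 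Over $\Lambda = \Qt$ no saturation is needed and the raw kernel already works, as both you and the paper note.
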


\begin{proof}
Let $D$ be a slice disc for $K$,  and recall that $E_D:=D^4 \smallsetminus \nu(D)$ is a compact 4-manifold with $\partial E_D= M_K$.
 Let $P':= \ker(H_1(M_K; \Zt) \to H_1(E_D; \Zt))$, and let
 \[P := \{p \in H_1(M_K; \Zt) \mid kp \in P' \text{ for some } k \in \Z \sm \{0\}\}.\]
 It is well known \cite[Theorem~2.1]{Friedl-04}, \cite[Theorem~2.4]{Hillman-alg-invariants-links} that $P$ is a metabolizer for the Blanchfield pairing, establishing item (1).

 Now suppose that  $\tau$ extends over $D^4$ to $\widehat{\tau}$ with $D = \widehat{\tau}(D)$.
 It follows that
 \begin{align*}
 P'&= \ker(H_1(M_K; \Zt) \to H_1(E_D; \Zt))
  = \ker(H_1(M_K; \Zt) \to H_1(E_{\widehat{\tau}(D)}; \Zt))\\
 &= \tau_*(\ker(H_1(M_K; \Zt) \to H_1(E_D; \Zt)))=\tau_*(P').
 \end{align*}
Since $\tau_*$ is $\Z$-linear, $p \in P$ if and only if $kp \in P'$ for some $k \in \Z \sm \{0\}$, if and only if $k\tau_*(p) \in P'$ (because $k\tau_*(p) = \tau_*(kp) \in \tau_*(P') = P'$), if and only if $\tau_*(p) \in P$.
We have therefore established item (2), that $P$ is $\tau_*$-invariant.

The version with $\Lambda$ coefficients is easier; we can simply take $P:= \ker(H_1(M_K; \Lambda) \to H_1(E_D; \Lambda))$.
\end{proof}

This is an effective obstruction to equivariant sliceness.  For example,  when combined with Proposition~\ref{prop:genusone} it shows the following, which proves Corollary~\ref{cor:notslice} from the introduction. In many individual cases we expect this could also be proven using Sakuma's $\eta$ invariant, although it is not obvious how to apply that invariant to a general family of knots such as this.

\begin{corollary}~\label{cor:notslice}
Let $(K,\tau)$ be a genus one strongly invertible knot with nontrivial Alexander polynomial. Then $(K, \tau)$ is not equivariantly slice.
\end{corollary}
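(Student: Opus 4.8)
The plan is to combine the equivariant slice obstruction of Proposition~\ref{prop:equivslice} with the nonvanishing computation of Proposition~\ref{prop:genusone} (specialised to $n=1$). Suppose for contradiction that $(K,\tau)$ is equivariantly slice. By Proposition~\ref{prop:equivslice} applied with $\Lambda = \Qt$ coefficients, there exists a submodule $P \leq \A(K)$ that is a metabolizer for the rational Blanchfield pairing $\Bl_K$ and is $\tau_*$-invariant. Being a metabolizer means in particular that $\Bl_K(x,y)=0$ for all $x,y \in P$, and that $|P|\cdot\ol{|P|} = |\A(K)|$; the latter forces $P$ to be nonzero whenever $\A(K)$ is nonzero, which holds here since $K$ has nontrivial Alexander polynomial.

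The key point is then to feed a nonzero element of $P$ into Proposition~\ref{prop:genusone}. Choose any nonzero $x \in P$. Because $P$ is $\tau_*$-invariant, we have $\tau_*(x) \in P$ as well, and since $P$ is isotropic for $\Bl_K$ we conclude $\Bl_K(x,\tau_*(x)) = 0$. But Proposition~\ref{prop:genusone}, in the case $n=1$ where $(K_1,\tau_1)=(K,\tau)$, asserts precisely that $\Bl_K(x,\tau_*(x)) \neq 0$ for every nonzero $x \in \A(K)$. This is the desired contradiction, so $(K,\tau)$ cannot be equivariantly slice.

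One small gap to address is that Proposition~\ref{prop:genusone} is stated for algebraically slice genus one knots, whereas Corollary~\ref{cor:notslice} asserts the conclusion for all genus one strongly invertible knots with nontrivial Alexander polynomial. I would handle this by recalling that a genus one knot is algebraically slice if and only if its (symmetrized) Alexander polynomial admits the appropriate factorisation; in the non-algebraically-slice case the knot is not even algebraically slice, hence not topologically slice, and in particular cannot be equivariantly slice, since an equivariant slice disc is in particular a slice disc. Thus the two cases together cover all genus one strongly invertible knots with $\Delta_K \neq 1$, and the corollary follows.

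The main obstacle I anticipate is purely bookkeeping rather than conceptual: confirming that the $\tau_*$ appearing in Proposition~\ref{prop:genusone} (the inversion-induced anti-automorphism) agrees with the $\tau_*$ for which $P$ is invariant in Proposition~\ref{prop:equivslice}, and that the metabolizer's isotropy is exactly the vanishing condition $\Bl_K(x,\tau_*(y))$ that the nonvanishing statement contradicts. Since both propositions use the same inversion-induced map on the Alexander module, and since $\tau_*(x)\in P$ lets us invoke isotropy directly, no genuine difficulty arises; the argument is short once the two ingredients are in place.
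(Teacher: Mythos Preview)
Your proposal is correct and follows essentially the same argument as the paper: split into the non-algebraically-slice case (where $K$ is not even slice) and the algebraically-slice case, then in the latter use Proposition~\ref{prop:equivslice} to produce a nontrivial $\tau_*$-invariant metabolizer $P$ and derive a contradiction from Proposition~\ref{prop:genusone} applied to any nonzero $x\in P$. The bookkeeping concerns you raise are non-issues, exactly as you concluded.
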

\begin{proof}
If $K$ is not algebraically slice then it is not even slice, so is certainly not equivariantly slice.
 Suppose that $K$ is algebraically slice with nontrivial Alexander polynomial.  If $(K,\tau)$ were equivariantly slice, there would be an invariant metabolizer $P$ for the Blanchfield form, by Proposition~\ref{prop:equivslice}.  Then for every $x \in P$ we would have $\Bl_{K}(x,\tau_*(x))=0$. But we computed in Proposition~\ref{prop:genusone} that this holds only for $x=0$. Since $\Delta_K \neq 1$, any such $P$ must be nontrivial by Proposition~\ref{prop:equivslice}~(1b).
 Thus there is no such~$P$.
\end{proof}

As noted in the introduction,  the proof of Proposition~\ref{prop:equivslice} carries through identically under the weaker hypothesis that $K$ bounds a slice disc $D$ such that for some extension $\widehat{\tau}$,  one has that $D$ and $\widehat{\tau}(D)$ are isotopic rel.\ boundary.  So Corollary~\ref{cor:notslice} also shows that genus one knots with nontrivial Alexander polynomials are not isotopy-equivariantly slice.

\subsection{The equivariant algebraic concordance group}\label{subsection:equiv-alg-conc}

Proposition~\ref{prop:equivslice} motivates the following definition, which we use to formalise the results on equivariant slicing.

Similarly to before,  given a $\Zt$-module $U$, we write $\ol{U}$ for the same abelian group as $U$ with the involuted $\Zt$ action, i.e.\ $(p , u) \mapsto \ol{p}\cdot u$.  Given a sesquilinear, Hermitian pairing $B \colon U \times U \to \Q(t)/\Zt$, there is an involuted pairing $\ol{B} \colon \ol{U} \times \ol{U} \to \Q(t)/\Zt$ given by $\ol{B}(x,y) = \ol{B(x,y)}$. This is also Hermitian but has the opposite convention on the meaning of sesquilinearity, that is $\ol{B}(px,qy) = \ol{p} \ol{B}(x,y) q$.

\begin{definition}\label{defn:equiv-alg-conc}
We introduce a set and an equivalence relation which will lead to a definition of the equivariant algebraic concordance group.
\begin{enumerate}
\item   We consider the set of triples $(H,\Bl, \tau)$, consisting of the following data.
  \begin{enumerate}[(i)]
\item A finitely generated $\Zt$-module $H$, that is $\Zt$-torsion, $\Z$-torsion free, and such that $m_{1-t} \colon H \to H$; $x \mapsto (1-t)\cdot x$  is an isomorphism.
\item A sesquilinear, Hermitian, nonsingular pairing $\Bl \colon H \times H \to \Q(t)/\Zt$.
\item An anti-isometry $\tau \colon H \to H$ with $\tau^2 = \Id$. That is, $\tau \colon H \to H$ is an anti-automorphism, or in other words a $\Zt$-module isomorphism $\tau \colon H \xrightarrow{\cong} \ol{H}$, that induces an isometry between $\Bl \colon H \times H \to \Q(t)/\Zt$ and $\ol{\Bl} \colon \ol{H} \times \ol{H} \to \Q(t)/\Zt$.
\end{enumerate}
We call a triple $(H,\Bl, \tau)$ an \emph{abstract equivariant Blanchfield pairing}.
\item
An \emph{isometry} of  abstract equivariant Blanchfield pairings $\theta \colon (H_1,\Bl_1, \tau_1) \xrightarrow{\cong} (H_2,\Bl_2, \tau_2)$ is an isometry $\theta \colon H_1 \to H_2$ of Blanchfield pairings such that $\theta \circ \tau_1 = \tau_2 \circ \theta$.
\item
We say that $(H,\Bl, \tau)$ is \emph{metabolic} if there is a $\Zt$ submodule $P \subseteq H$, called a \emph{metabolizer}, such that
\begin{enumerate}[(a)]
\item for all $x,y \in P$ we have $\Bl(x,y)=0$;
\item $|P| \cdot \overline{|P|}= |H|$;
\item $P$ is $\tau$-invariant, i.e.\ $\tau(P)=P$.
\end{enumerate}
\item The sum of two abstract equivariant Blanchfield pairings $(H_1,\Bl_1, \tau_1)$ and $(H_2,\Bl_2, \tau_2)$ is
\[(H_1,\Bl_1, \tau_1) \oplus (H_2,\Bl_2, \tau_2) := (H_1 \oplus H_2, \Bl_1 \oplus \Bl_2, \tau_1 \oplus \tau_2).\]
\item
We say that two abstract equivariant Blanchfield pairings $(H_1,\Bl_1, \tau_1)$ and $(H_2,\Bl_2, \tau_2)$ are \emph{algebraically concordant} if there are metabolic pairings $(U_1,B_1,\sigma_1)$ and $(U_2,B_2,\sigma_2)$ such that there is an isometry
\[(H_1,\Bl_1, \tau_1) \oplus (U_1,B_1,\sigma_1) \cong   (H_2,\Bl_2, \tau_2) \oplus (U_2,B_2,\sigma_2). \]
It is easy to see that algebraic concordance is an equivalence relation.
\end{enumerate}
\end{definition}

\begin{remark}
  Does stably metabolic imply metabolic? If so, we could simplify the equivalence relation to requiring that $(H_1,\Bl_1, \tau_1) \oplus (H_2,-\Bl_2, \tau_2)$ is metabolic.
\end{remark}

\begin{proposition}\label{prop:its-a-group}
  With respect to the given addition, the set of algebraic concordance classes of abstract equivariant Blanchfield pairings forms a group. The inverse of $(H,\Bl, \tau)$ is $(H,-\Bl, \tau)$.
\end{proposition}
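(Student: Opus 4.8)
The plan is to verify the group axioms for the set of algebraic concordance classes under the direct sum operation, with the bulk of the work going into well-definedness, the identity element, and the inverse. First I would check that $\oplus$ is well-defined on concordance classes: if $(H_1,\Bl_1,\tau_1)$ is algebraically concordant to $(H_1',\Bl_1',\tau_1')$ and similarly for the second coordinate, then adding the witnessing metabolic pairings and using that a direct sum of metabolic pairings is metabolic (with metabolizer the direct sum of the metabolizers, which is visibly $\tau$-invariant and satisfies the order condition since orders multiply) shows the sums are concordant. Associativity and commutativity are inherited from the corresponding properties of $\oplus$ on modules, pairings, and anti-isometries, up to the obvious reassociating and swapping isometries, which respect the $\tau$-structure.

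Next I would identify the identity element as the class of the zero pairing $(0,0,0)$, or equivalently the class of any metabolic pairing: indeed $(H,\Bl,\tau)$ is concordant to $(0,0,0)$ precisely when $(H,\Bl,\tau)$ becomes metabolic after adding a metabolic pairing, and in particular every metabolic pairing is null-concordant by adding the zero pairing. The substance is the inverse: I claim $(H,-\Bl,\tau)$ is an inverse to $(H,\Bl,\tau)$, i.e.\ that
\[
(H,\Bl,\tau)\oplus(H,-\Bl,\tau) = (H\oplus H,\ \Bl\oplus(-\Bl),\ \tau\oplus\tau)
\]
is metabolic. The natural candidate metabolizer is the diagonal $\Delta = \{(x,x) : x\in H\}$. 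On $\Delta$ the pairing vanishes because $(\Bl\oplus(-\Bl))((x,x),(y,y)) = \Bl(x,y)-\Bl(x,y)=0$; the diagonal is $\tau\oplus\tau$-invariant since $(\tau\oplus\tau)(x,x)=(\tau(x),\tau(x))\in\Delta$; and the order condition $|\Delta|\cdot\overline{|\Delta|} = |H\oplus H|$ holds because $|\Delta| \doteq |H|$ and $|H\oplus H| = |H|\cdot|H| \doteq |H|\cdot\overline{|H|}$, using that the order of a Hermitian pairing module satisfies $|H| \doteq \overline{|H|}$ up to units. I would also confirm $-\Bl$ is again nonsingular, sesquilinear, and Hermitian, and that $\tau$ is still an anti-isometry of $-\Bl$ (negation commutes with the involution), so that $(H,-\Bl,\tau)$ is a legitimate object of the theory.

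The main obstacle I expect is the order computation for the diagonal metabolizer, specifically justifying $|\Delta|\cdot\overline{|\Delta|}=|H\oplus H|$ rigorously and checking that $\Delta$ is genuinely a $\Zt$-submodule of the correct ``size'' rather than merely set-theoretically half of $H\oplus H$. The key point is that $\Delta$ is the image of the $\Zt$-linear diagonal embedding $H \to H\oplus H$, hence $\Delta\cong H$ as a $\Zt$-module and $|\Delta|\doteq|H|$, and that the self-duality $|H|\doteq\overline{|H|}$ for a module carrying a nonsingular Hermitian form is exactly what makes the two factors match. One should also take care that the definition of metabolic in Definition~\ref{defn:equiv-alg-conc}(3) only asks for a $\tau$-invariant isotropic submodule of the right order, not a Lagrangian in a stronger sense, so the diagonal suffices without needing to verify that $\Delta$ equals its own annihilator. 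Once these algebraic identities are in place, the group axioms follow formally, completing the proof.
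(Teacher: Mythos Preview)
Your proposal is correct and follows essentially the same route as the paper: both take the diagonal $\Delta=\{(x,x):x\in H\}$ as the metabolizer for $(H,\Bl,\tau)\oplus(H,-\Bl,\tau)$ and verify the three conditions of Definition~\ref{defn:equiv-alg-conc}(3), with the order condition handled via $|\Delta|\doteq|H|$ and $|H|\doteq\overline{|H|}$ from nonsingularity. Your write-up is somewhat more thorough in spelling out well-definedness and the legitimacy of $(H,-\Bl,\tau)$, which the paper dismisses as straightforward, but the substantive argument is identical.
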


We call this group the \emph{equivariant algebraic concordance group}, and denote it $\mathcal{AC}^{SI}$.
If $(H,\Bl, \tau) =0 \in \mathcal{AC}^{SI}$ then we say that $(H,\Bl, \tau)$ is \emph{equivariantly algebraically slice}. Similarly, if a strongly invertible knot $(K,\tau)$ lies in $\ker \Psi$, then we say that $(K,\tau)$ is \emph{equivariantly algebraically slice}.

\begin{proof}[Proof of Proposition~\ref{prop:its-a-group}]
It is straightforward to argue that the addition is well-defined on equivalence classes, that it is associative, and that the equivalence class containing all metabolic abstract equivariant Blanchfield pairings is the identity.
We need to prove that the inverse of $(H,\Bl, \tau)$ is $(H,-\Bl, \tau)$, or in other words that there is a metabolic pairing
$(U,B,\sigma)$ such that
\[(H,\Bl, \tau) \oplus (H,-\Bl, \tau) \oplus (U,B,\sigma)\]
is metabolic.  In fact we can take $U=0$, and define the diagonal submodule
\[P:= \{(x,x) \in H \oplus H \mid x \in H\}.\]
We check that $P$ is a metabolizer for $\Bl \oplus - \Bl$.
To see (a), we compute that $(\Bl \oplus -\Bl) ((x,x),(y,y)) = \Bl(x,y) - \Bl(x,y) = 0$ for all $x,y \in H$ and therefore for every $(x,x)$ and $(y,y)$ in~$P$. To show (b), note that since $\Bl$ is nonsingular we know that $|H| = \ol{|H|}$, and therefore $|H \oplus H| = |H| \cdot |H| = |H|\cdot \ol{|H|}$. On the other hand $|P|= |H|$, and so $|P| \cdot \ol{|P|} = |H|\cdot \overline{|H|} = |H \oplus H|$.  Finally, to see (c) we compute that for any $(x,x) \in P$ we have $(\tau\oplus \tau)(x,x) = (\tau(x),\tau(x)) \in P$. Therefore $P$ is $\tau$-invariant.
This completes the proof that $P$ is a metabolizer, and therefore completes the proof that $\mathcal{AC}^{SI}$ is a group.
\end{proof}

\begin{proposition}\label{prop:its-a-homomorphism}
  Taking the integral Blanchfield form of a strongly invertible knot $(K,\tau)$ together with the involution-induced map on the integral Alexander module $\A^\Z(K)$ gives rise to a  homomorphism $\Psi \colon \mathcal{C}^{SI} \to \mathcal{AC}^{SI}$.
\end{proposition}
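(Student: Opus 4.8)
The map $\Psi$ sends $(K,\tau)$ to the triple $(\A^\Z(K), \Bl^\Z_K, \tau_*)$, and there are three things to verify: that this triple is a valid abstract equivariant Blanchfield pairing in the sense of Definition~\ref{defn:equiv-alg-conc}, that $\Psi$ is additive under equivariant connected sum, and that $\Psi$ descends to equivariant concordance classes. The first two are quick. For the first, that $\A^\Z(K)$ is a finitely generated $\Zt$-module which is $\Zt$-torsion, $\Z$-torsion-free, and on which multiplication by $1-t$ is an isomorphism are standard properties of the integral Alexander module of a knot (the last encoding $\Delta_K(1) = \pm 1$); that $\Bl^\Z_K$ is sesquilinear, Hermitian and nonsingular was recalled in Section~\ref{sec:Bl-pairing}; and that $\tau_*$ is an anti-isometry is exactly Proposition~\ref{prop:anti-isometry}, while $\tau_*^2 = \Id$ follows because $\rho_K^{-1} \circ \tau$ squares to the identity homeomorphism of $E_K$. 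For additivity, Proposition~\ref{prop:connected-sum-well-behaved} supplies an isometry $\Psi\big((K_1,\tau_1) \# (K_2,\tau_2)\big) \cong \Psi(K_1,\tau_1) \oplus \Psi(K_2,\tau_2)$ on the nose, so once well-definedness is established the homomorphism property is immediate.

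The content of the proposition is therefore that equivariantly concordant knots have algebraically concordant triples, and I would prove this by mirroring Proposition~\ref{prop:equivslice}. Given a directed equivariant concordance $(C, \wh{\tau})$ from $(K_0,\tau_0)$ to $(K_1,\tau_1)$, perform zero-framed surgery along $C$ to obtain a cobordism $W$ with $\partial W = M_{K_0} \sqcup -M_{K_1}$. Setting $P' := \ker\big(H_1(\partial W; \Zt) \to H_1(W; \Zt)\big)$ and letting $P$ be its $\Z$-saturation, the standard concordance argument (half-lives-half-dies together with duality, as in \cite{Friedl-04, Hillman-alg-invariants-links}) shows that $P$ is a metabolizer for $\Bl^\Z_{K_0} \oplus -\Bl^\Z_{K_1}$ on $\A^\Z(K_0) \oplus \overline{\A^\Z(K_1)}$, where the sign and bar on the second factor record the orientation reversal at the $K_1$ end. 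Because $\wh{\tau}$ preserves $C$ and the topological zero-framing, it extends to a self-homeomorphism of $W$ restricting to $\tau_0, \tau_1$ on the two ends; hence $\wh{\tau}_*$ commutes with inclusion and carries $P'$, and therefore $P$, to itself, inducing on the boundary the map $(\tau_0)_* \oplus (\tau_1)_*$ after the $\rho$-correction built into the definition of $\tau_*$. This exhibits $\Psi(K_0,\tau_0) \oplus \big(\A^\Z(K_1), -\Bl^\Z_{K_1}, (\tau_1)_*\big)$ as metabolic, i.e. equivariantly algebraically slice, so $\Psi(K_0,\tau_0) = \Psi(K_1,\tau_1)$ in $\mathcal{AC}^{SI}$.

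The main obstacle is precisely the bookkeeping in that last step: checking that $\wh{\tau}$ genuinely descends to the surgered cobordism $W$ compatibly with its infinite cyclic cover, that the involution it induces on the boundary Alexander modules agrees with $(\tau_0)_* \oplus (\tau_1)_*$ once the meridian-reversing correction $\rho$ is accounted for, and that the orientation conventions produce $-\Bl^\Z_{K_1}$ rather than $+\Bl^\Z_{K_1}$ on the second summand. A cleaner packaging that offloads much of this is to reduce to Proposition~\ref{prop:equivslice}: an equivariant concordance from $K_0$ to $K_1$ makes $(K_0,\tau_0) \# (K_1,\tau_1)^{-1}$ equivariantly slice, where $(K_1,\tau_1)^{-1}$ is the equivariant-concordance inverse of Section~\ref{sec:equiv-conn-sum-and-equiv-conc-gp}. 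Proposition~\ref{prop:equivslice} then directly provides a $\tau_*$-invariant metabolizer, and combined with additivity and the computation $\Psi\big((K_1,\tau_1)^{-1}\big) = \big(\A^\Z(K_1), -\Bl^\Z_{K_1}, (\tau_1)_*\big)$ (mirroring negates the Blanchfield form while preserving the induced involution) this again gives $\Psi(K_0,\tau_0) = \Psi(K_1,\tau_1)$. Either way, the only genuinely new input beyond the classical algebraic-concordance homomorphism $\mathcal{C} \to \mathcal{AC}$ is the $\wh{\tau}$-equivariance of the metabolizer, which is exactly what Proposition~\ref{prop:equivslice} was designed to deliver.
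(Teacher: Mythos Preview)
Your proposal is correct, and the paper takes exactly your second ``cleaner packaging'' route: it reduces well-definedness to the observation that an equivariant concordance makes $(K_1,\tau_1) \# (K_2,\tau_2)^{-1}$ equivariantly slice, invokes Proposition~\ref{prop:equivslice} to obtain a $\tau_*$-invariant metabolizer, and then runs the formal add-$\Psi(K_2,\tau_2)$-to-both-sides argument. Your verification that the triple actually satisfies the axioms of Definition~\ref{defn:equiv-alg-conc} is in fact more thorough than the paper's, which simply asserts that one obtains an element of the codomain; your direct concordance-exterior approach is a legitimate alternative the paper does not pursue.
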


\begin{proof}
We showed in Proposition~\ref{prop:anti-isometry} that the involution-induced map $\tau_* \colon \A^{\Z}(K) \to \A^{\Z}(K)$ is an anti-isometry of the Blanchfield pairing. Thus we obtain an element of the codomain $\mathcal{AC}^{SI}$.
We know from the ordinary algebraic concordance group that $\Psi(-K,\tau) = (\A^\Z(K),-\Bl_K^\Z,\tau_*) = - \Psi(K,\tau)$.

We check that $\Psi$ is well-defined. The argument is at this stage standard and purely formal.
Suppose that $(K_1,\tau_1)$ and $(K_2,\tau_2)$ are equivariantly concordant. Then $(K_1 \# -K_2,\tau_1 \cup \tau_2)$ is equivariantly slice, and therefore
\begin{align*}
  (\A^\Z(K_1 \# K_1), \Bl_{K_1 \# -K_2}^{\Z}, \tau_1 \cup \tau_2) &\cong (\A^\Z(K_1), \Bl_{K_1}^{\Z},(\tau_1)_*) \oplus  (\A^\Z(K_2), -\Bl_{K_2}^{\Z},(\tau_2)_*) \\
  &= \Psi(K_1,\tau_1) \oplus -\Psi(K_2,\tau_2)
\end{align*}
is a metabolic form $(U,B,\sigma)$ by Proposition~\ref{prop:equivslice}.
We also used Proposition~\ref{prop:connected-sum-well-behaved} here.
Add $\Psi(K_2,\tau_2)$ to both sides to see that
\begin{align*}
  \Psi(K_1,\tau_1) \oplus  -\Psi(K_2,\tau_2) \oplus \Psi(K_2,\tau_2) \cong (U,B,\sigma) \oplus \Psi(K_2,\tau_2).
\end{align*}
On the left hand side, $-\Psi(K_2,\tau_2) \oplus \Psi(K_2,\tau_2)$ is metabolic, as we showed in the proof of Proposition~\ref{prop:its-a-group}.  Since $(U,B,\sigma)$ is also metabolic, it follows that $\Psi(K_1,\tau_1) = (\A^\Z(K_1), \Bl_{K_1}^{\Z},(\tau_1)_*)$ and $\Psi(K_2,\tau_2) = (\A^\Z(K_2), \Bl_{K_2}^{\Z},(\tau_2)_*)$ are algebraically concordant. Thus $\Psi \colon \mathcal{C}^{SI} \to \mathcal{AC}^{SI}$ is a well-defined map as desired.

Finally, we know by Proposition~\ref{prop:connected-sum-well-behaved} and the observation in the first paragraph of the proof that for every pair of strongly invertible knots $(K_1,\tau_1)$ and $(K_2,\tau_2)$, we have that \[\Psi((K_1,\tau_1) \# -(K_2,\tau_2)) = \Psi(K_1,\tau_1) \oplus \Psi(-K_2,\tau_2) = \Psi(K_1,\tau_1) \oplus -\Psi(K_2,\tau_2).\]
It follows that $\Psi$ is indeed a homomorphism.
\end{proof}

\subsection{The kernel of $F$}

We consider the forgetful map $F \colon \mathcal{C}^{SI} \to \mathcal{C}$.   Recall that Theorem~\ref{thm:subgroup-of-ker-F} asserts that $\ker F$ contains a subgroup of infinite rank, which is detected by the equivariant algebraic concordance group.
Combining Propositions ~\ref{prop:genusone} and ~\ref{prop:equivslice}   implies the following.

\begin{theorem}\label{theorem:on-ker-F--in-body}
Let $K_1,\dots, K_n$ be genus one  algebraically slice knots with nontrivial and pairwise distinct Alexander polynomials and strong inversions $\tau_i$.
Let $a_1, \dots, a_n \in \N$,  and let $(a_iK_i,  a_i\tau_i)$ denote the $a_i$-fold equivariant connected sum of $(K_i, \tau_i)$
The knot $(K, \tau)=\#_{i=1}^n (a_i K_i,  a_i\tau_i)$ is not equivariantly algebraically slice and is therefore not equivariantly slice.
\end{theorem}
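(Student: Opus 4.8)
The plan is to combine the equivariant slice obstruction of Proposition~\ref{prop:equivslice} with the non-vanishing computation of Proposition~\ref{prop:genusone}, reducing the theorem to a statement about invariant metabolizers. Suppose for contradiction that $(K,\tau)$ is equivariantly algebraically slice. By the analogue of Proposition~\ref{prop:equivslice} at the level of abstract equivariant Blanchfield pairings — that is, working directly in $\mathcal{AC}^{SI}$ rather than assuming a geometric slice disc — being equivariantly algebraically slice means $(\A(K),\Bl_K,\tau_*)$ is metabolic after stabilising by a metabolic pairing. I would first argue that if a pairing is stably metabolic then there is already a $\tau$-invariant submodule $P \leq \A(K)$ satisfying $\Bl_K(x,y)=0$ for all $x,y \in P$ together with the size condition $|P|\cdot\overline{|P|}=|\A(K)|$; the cleanest route is to observe that the defining isometry identifies $\A(K)$ with a direct summand of a metabolic pairing and to extract $P$ as the intersection of the ambient metabolizer with this summand, tracking $\tau$-invariance through the isometry.

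The key step is then the observation that $\tau$-invariance of $P$ forces $\Bl_K(x,\tau_*(x))=0$ for every $x\in P$: indeed $x\in P$ and $\tau_*(x)\in\tau_*(P)=P$, so the vanishing of $\Bl_K$ on $P\times P$ applies to the pair $(x,\tau_*(x))$. This is precisely the hypothesis that Proposition~\ref{prop:genusone} contradicts. Here I must account for the hypothesis that the $K_i$ have \emph{pairwise distinct} Alexander polynomials. I would extend the computation of Proposition~\ref{prop:genusone} from $\#^n(K,\tau)$ to the mixed connected sum $\#_{i=1}^n(a_iK_i,a_i\tau_i)$ by noting that the Alexander module splits as $\bigoplus_i \A(K_i)^{a_i}$ and that the Blanchfield pairing and $\tau_*$ split accordingly by Proposition~\ref{prop:connected-sum-well-behaved}. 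The computation in Proposition~\ref{prop:genusone} shows that on each $\A(K_i)^{a_i}$ block the pairing $\Bl(x,\tau_*(x))$ is a sum of terms supported on $\smfrac{(m_i+1)t-m_i}{m_it-(m_i+1)}$ and its conjugate. The distinctness of the $\Delta_{K_i}$ guarantees these denominators are pairwise coprime across different $i$, so Lemma~\ref{lemma:elem-fact-coprime} lets me conclude that $\Bl(x,\tau_*(x))=0$ forces vanishing block by block, hence $x=0$.

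Putting these together: any nonzero $x$ in a $\tau$-invariant isotropic $P$ would violate $\Bl_K(x,\tau_*(x))\neq 0$, so $P$ must be trivial. But since each $\Delta_{K_i}\neq 1$ the total Alexander polynomial is nontrivial, so $|\A(K)|\neq 1$, and the size condition $|P|\cdot\overline{|P|}=|\A(K)|$ rules out $P=0$. This contradiction shows $(K,\tau)$ is not equivariantly algebraically slice, and since $\Psi$ is a homomorphism (Proposition~\ref{prop:its-a-homomorphism}) an equivariantly slice knot would be equivariantly algebraically slice, so $(K,\tau)$ is not equivariantly slice.

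I expect the main obstacle to be the first step, namely passing from \emph{stably} metabolic to the existence of an honest $\tau$-invariant isotropic submodule of $\A(K)$ of the correct size. The subtlety mirrors the remark after Definition~\ref{defn:equiv-alg-conc} questioning whether stably metabolic implies metabolic. I would sidestep the general question by using only what is needed: one can work rationally, where $\Lambda=\Qt$ is a PID, so $\A(K)$ is a torsion module over a PID and the metabolizer condition becomes a statement about half-dimensional isotropic subspaces compatible with the rational-primary decomposition. Since the distinct $\Delta_{K_i}$ occupy different primary components and $\tau_*$ preserves each primary component (as $\tau_*$ is an anti-automorphism intertwining $t$ and $t^{-1}$, and the relevant irreducible factors are closed under $t\mapsto t^{-1}$), the extraction of $P$ respects the block decomposition, and the coprimality argument of Lemma~\ref{lemma:elem-fact-coprime} applies cleanly within each block.
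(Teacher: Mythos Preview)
Your core computation matches the paper's exactly: use Proposition~\ref{prop:genusone} together with the pairwise coprimality of the distinct $\Delta_{K_i}$ (via Lemma~\ref{lemma:elem-fact-coprime}) to show that $\Bl_K(x,\tau_*(x))=0$ forces $x=0$, whence any $\tau_*$-invariant isotropic submodule of $\A(K)$ is trivial. That is precisely the paper's argument.

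Where you diverge, and where there is a genuine gap, is in the framing around stability. The paper does \emph{not} attempt to pass from ``stably metabolic'' to ``metabolic''. It simply shows there is no nontrivial $\tau_*$-invariant metabolizer, asserts that $(K,\tau)$ is not equivariantly algebraically slice, and then invokes Proposition~\ref{prop:equivslice} directly for the conclusion that $(K,\tau)$ is not equivariantly slice. Your proposed extraction of an honest metabolizer $P\leq\A(K)$ as ``the intersection of the ambient metabolizer with this summand'' does not work: if $L$ is a metabolizer for $H\oplus U$, then $L\cap H$ is $\tau$-invariant and isotropic but typically fails the size condition $|P|\cdot\overline{|P|}=|H|$ badly (it can even be zero). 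Your PID/primary-decomposition sidestep does not repair this, since the same intersection defect occurs within each primary component. You rightly flag this as the main obstacle; the paper's own Remark after Definition~\ref{defn:equiv-alg-conc} records that ``stably metabolic $\Rightarrow$ metabolic'' is open, so you should not expect to prove it en route.

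For the geometric conclusion ``not equivariantly slice'', note that the paper does not go through $\Psi$ at all: it uses Proposition~\ref{prop:equivslice} directly (an equivariant slice disc would yield a $\tau_*$-invariant metabolizer, and none exists). This bypasses the stability issue entirely and is the route you should take for that half of the statement.
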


\begin{proof}
We work with $\Lambda$ coefficients.
Let $x=(x_1, \dots, x_n)$ be an element of $\A(K)= \oplus_{i=1}^n \A(a_i K_i)$.
Since the Alexander polynomials of $K_1, \dots, K_n$ are distinct degree 2 symmetric polynomials satisfying $|p(1)|=1$,  they are pairwise relatively prime.  By the multiplicativity of Alexander polynomials under connected sum,
the Alexander polynomials of $a_1K_1,\dots, a_nK_n$ are also pairwise relatively prime.   It follows that
\[ \Bl_K(x, \tau_*(x))= \sum_{i=1}^n \Bl_{a_i K_i}(x_i, (a_i \tau_i)_*(x_i))=0\]
if and only if $Bl_{a_i K_i}(x, (a_i \tau_i)_*(x_i))=0$ for all $i=1, \dots, n$.
By Proposition~\ref{prop:genusone}, for each $i=1, \dots, n$ we have that $\Bl_{a_i K_i}(x_i, (a_i \tau_i)_*(x_i))= 0$ if and only if $x_i=0$.  Therefore $\Bl_K(x, \tau_*(x))=0$ if and only if $x=0$,  and there certainly is no $\tau_*$ invariant metabolizer for the Blanchfield pairing of $K$.   Therefore $(K,\tau)$ is not equivariantly algebraically slice and by Proposition~\ref{prop:equivslice},  $K$ is not equivariantly slice.
\end{proof}

It is now straightforward to prove Theorem~\ref{thm:subgroup-of-ker-F} from the introduction, which follows from the next corollary.

\begin{corollary}
  Let $\{K_i\}_{i=1}^{\infty}$ be a collection of strongly invertible genus one slice knots with nontrivial and pairwise distinct Alexander polynomials. Then the $\{K_i\}_{i=1}^{\infty}$ generate an infinite rank subgroup of $\ker (F \colon \mathcal{C}^{SI} \to \mathcal{C})$ whose image in $\mathcal{AC}^{SI}$ is also infinite rank.
\end{corollary}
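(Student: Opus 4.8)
The plan is to detect the claimed subgroup through the homomorphism $\Psi$ of \cref{prop:its-a-homomorphism}, reducing everything to a linear independence statement in the abelian group $\mathcal{AC}^{SI}$. First I would note that each $K_i$ is slice, so its class in $\mathcal{C}$ is trivial and hence the class of $(K_i,\tau_i)$ lies in $\ker(F\colon\mathcal{C}^{SI}\to\mathcal{C})$; thus the subgroup $G$ generated by the $(K_i,\tau_i)$ already lies in $\ker F$. Since $\Psi$ is a homomorphism into the abelian group $\mathcal{AC}^{SI}$, the image $\Psi(G)$ is generated by the classes $\Psi(K_i,\tau_i)$, and it suffices to prove that these classes are $\Z$-linearly independent: then $\Psi(G)$ is free abelian of infinite rank, and as $G$ surjects onto $\Psi(G)$ both have infinite rank, as required.

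To establish independence I must show that for every $n$ and every nonzero $(a_1,\dots,a_n)\in\Z^n$ the class $\sum_{i=1}^n a_i\,\Psi(K_i,\tau_i) = \Psi\big(\#_{i=1}^n a_i(K_i,\tau_i)\big)$ is nonzero, where $a_i(K_i,\tau_i)$ denotes the $|a_i|$-fold equivariant connected sum of $(K_i,\tau_i)$ for $a_i>0$ and of its inverse for $a_i<0$. This is \cref{theorem:on-ker-F--in-body} for coefficients in $\N$, and I would extend its proof to integer coefficients. Taking the inverse in $\mathcal{C}^{SI}$ replaces the Blanchfield form by its negative and leaves the Alexander module, and its Alexander polynomial up to units, unchanged, so the blocks indexed by distinct $i$ still have pairwise coprime orders and \cref{lemma:elem-fact-coprime} splits $\Bl_K(x,\tau_*(x))$ as a sum over $i$ that vanishes if and only if each summand does. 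Within the block indexed by $i$ the computation of \cref{prop:genusone} applies unchanged up to an overall sign $\sign(a_i)$, and that sign is irrelevant to the vanishing locus, so each summand vanishes only for $x_i=0$.

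The step deserving the most care, and the one I would flag, is exactly this insensitivity to sign: the obstruction in \cref{prop:genusone} is that a sum of rational squares is zero only when every term is zero, a positivity phenomenon that survives negating the form and therefore survives passing to inverses, whereas a Witt-type cancellation argument would not. Granting it, $\Bl_K(x,\tau_*(x))=0$ forces $x=0$ for $K=\#_{i=1}^n a_i(K_i,\tau_i)$; since $(a_1,\dots,a_n)\neq 0$ and each $\Delta_{K_i}\neq 1$, the module $\A(K)$ is nonzero, so by item (1b) of \cref{prop:equivslice} any metabolizer is nontrivial, while a nonzero element $x$ of a $\tau_*$-invariant metabolizer would satisfy $\Bl_K(x,\tau_*(x))=0$, a contradiction. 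Hence $\Psi\big(\#_i a_i(K_i,\tau_i)\big)\neq 0$, the classes $\Psi(K_i,\tau_i)$ are independent, and the corollary follows.
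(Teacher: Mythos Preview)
Your proof is correct and follows essentially the same route as the paper: show that $\Psi$ of every nontrivial integer combination is nonzero by ruling out $\tau_*$-invariant metabolizers via \cref{prop:genusone}. The only difference is in handling negative coefficients: rather than tracking signs through the proof of \cref{prop:genusone}, the paper simply replaces $K_i$ by its inverse $-K_i$ whenever $b_i<0$, observes that $-K_i$ is still a genus one strongly invertible algebraically slice knot with the same nontrivial Alexander polynomial, and then invokes \cref{theorem:on-ker-F--in-body} directly with all coefficients in $\N$.
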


\begin{proof}
It suffices to check that for every linear combination of the $K_i$, $J:= \#_i b_i K_i$, with $b_i \neq 0$ for finitely many $i$, we have $\Psi(J) \neq 0$, and therefore $J$ is not equivariantly slice.  If $b_i \geq 0$ then set $a_i := b_i$ and write $K_i' := K_i$, while if $b_i <0$ then set $a_i = -b_i$ and $K_i' := -K_i$. Then note that $J = \#_i a_i K_i'$.  The $K_i'$ have nontrivial pairwise distinct Alexander polynomials, are genus one, and are strongly invertible. Therefore Theorem~\ref{theorem:on-ker-F--in-body} applies to show that $J$ is not equivariantly algebraically slice, and so $J$ is not equivariantly slice.
 This shows that $\{\Psi(K_i)\}_{i=1}^{\infty}$ is an infinite rank subgroup of  $\mathcal{AC}^{SI}$, and therefore that the $\{K_i\}_{i=1}^{\infty}$ generate an infinite rank subgroup of $\ker F$ as claimed.
\end{proof}

\subsection{Some amphichiral examples}

In this subsection we show that many order two knots in $\mathcal{C}$ map to infinite order equivariant Blanchfield pairings in $\mathcal{AC}^{SI}$, and so are also infinite order in $\mathcal{C}^{SI}$.

Let $K:=K_a$ be a generalized twist knot with continued fraction expansion $[2a,2a]^+$ for some $a>0$,  with axis of strong inversion $\gamma$ as indicated in Figure~\ref{fig:amphi}.
\begin{figure}[h!]
\includegraphics[height=3cm]{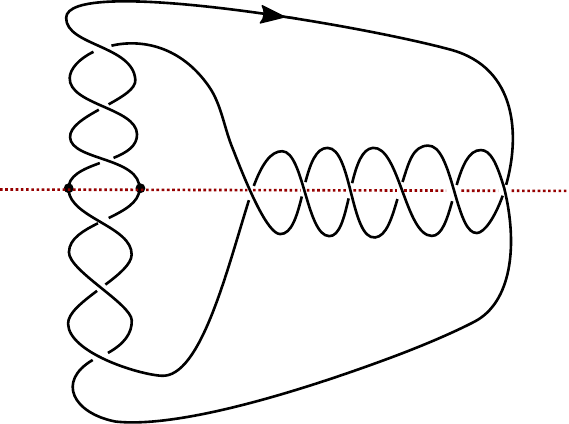}
\caption{The knot $K_a$ for $a=3$ with an axis of strong inversion.}
\label{fig:amphi}
\end{figure}

Applying Seifert's algorithm to the diagram of Figure~\ref{fig:amphi},  we see that $K$ has a genus one Seifert surface $F$ and basis $\alpha_1,\alpha_2$ for $H_1(F)$ with corresponding Seifert matrix $A=\bsm a & 0 \\ 1 & -a \esm$.
Let $\beta_1, \beta_2$ be the corresponding dual generating set for $H_1(S^3 \smallsetminus F)$,  and observe that $\tau(\beta_2)= \beta_2$.

Following the notation of Proposition~\ref{prop:alexandblanch},  we have that $\A(K)$ is generated by $b_1, b_2$ and has relations $a(t-1) b_1+t b_2=0$ and  $-b_1 -a(t-1)b_2=0$.  Simplifying this gives that $\A(K) \cong \Q[t^{\pm1}]/ \langle p_a(t) \rangle$,  generated by $b_2$, where $p_a(t)= \Delta_{K}(t)= a^2 t^2- (2a^2-1)t+a^2$.
Additionally,  we have that $\tau_*( q(t) b_2)= q(t^{-1}) b_2$ for all $q(t) \in \Q[t^{\pm1}]$.

\begin{lemma}\label{lem:irreducible-poly}
For every $a > 0$,  $p_a(t)$ is irreducible over $\Qt$,  and hence $\A(K)\cong \Qt/ p_a(t)$ is a field.
\end{lemma}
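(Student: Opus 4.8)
The plan is to exploit that $p_a(t) = a^2t^2 - (2a^2-1)t + a^2$ is a quadratic polynomial, so that its irreducibility is controlled entirely by its discriminant, and then to transfer this between the Laurent polynomial ring $\Lambda = \Qt$ and the ordinary polynomial ring $\Q[t]$. First I would observe that, since $p_a(0) = a^2 \neq 0$, the element $p_a(t)$ is coprime to the unit $t$ in $\Qt$. Consequently, irreducibility of $p_a$ in $\Qt$ is equivalent to its irreducibility in $\Q[t]$: any factorization of $p_a$ into non-units in $\Qt$ can be rescaled by units $\alpha t^k$ (with $\alpha \in \Q^\times$, $k \in \Z$) to a factorization into honest non-unit polynomials in $\Q[t]$, and conversely. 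Thus it suffices to show that $p_a$ is irreducible over $\Q[t]$.

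Since $p_a$ has degree two, it is irreducible over $\Q$ if and only if it has no root in $\Q$, and in particular it is irreducible if it has no real root. I would therefore compute the discriminant
\[ (2a^2-1)^2 - 4\,a^2\cdot a^2 = 4a^4 - 4a^2 + 1 - 4a^4 = 1 - 4a^2. \]
For every integer $a \geq 1$ we have $1 - 4a^2 \leq 1 - 4 < 0$, so $p_a$ has no real roots, hence no rational roots. Therefore $p_a$ is irreducible over $\Q[t]$, and by the previous paragraph it is irreducible over $\Qt$. (As a consistency check, $p_a$ is palindromic, so its roots multiply to $1$ and sum to $2 - a^{-2}$, whose square is strictly less than $4$ for $a \geq 1$, again ruling out real roots.)

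Finally, $\Qt$ is a principal ideal domain, being a localization of the PID $\Q[t]$. Hence the quotient of $\Qt$ by the ideal generated by the irreducible — and therefore prime — element $p_a(t)$ is a field. Combined with the identification $\A(K) \cong \Qt/\langle p_a(t)\rangle$ established just before the statement, this shows that $\A(K)$ is a field, as claimed.

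I do not anticipate any genuine obstacle here: the computation is elementary and the conclusion is forced by the sign of the discriminant. The only points that warrant a sentence of care are the passage between the Laurent polynomial ring $\Qt$ and $\Q[t]$ (justified by the nonvanishing of the constant term of $p_a$), and the standard fact that a quotient of a PID by an irreducible element is a field.
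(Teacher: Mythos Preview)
Your proof is correct, but it takes a different and arguably more direct route than the paper's. You simply compute the discriminant $1-4a^2$ of the quadratic $p_a(t)$ and observe it is negative for $a\geq 1$, so $p_a$ has no real (hence no rational) roots and is therefore irreducible; the passage between $\Q[t]$ and $\Qt$ via the nonvanishing constant term is handled cleanly. The paper instead uses the symmetry of $p_a$: since $p_a$ is a degree~2 symmetric integer polynomial with $p_a(1)=1$, any nontrivial factorization over $\Qt$ would force $p_a(t)=t^2 q(t)q(t^{-1})$ for some $q\in\Zt$, which would make $p_a(-1)=4a^2-1$ a perfect square --- and it is not. Your discriminant argument is more elementary and self-contained, requiring no discussion of symmetric factorizations or Gauss-type reasoning; the paper's approach, on the other hand, fits a template (evaluation at $-1$ to obstruct a norm factorization) that generalizes to higher-degree symmetric polynomials where a simple discriminant check is unavailable.
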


\begin{proof}
  Since $p_a(t)$ is a degree 2 symmetric polynomial with integer coefficients that evaluates to $1$ at $t=1$,  it suffices to show that $p_a(t)$ cannot be written  as $t^{2} q(t)q(t^{-1})$ for any $q(t) \in \Zt$.  But this follows from the fact that $p_a(-1)=4a^2-1= (2n)^2-1$ is never a square.
\end{proof}

\begin{proposition}\label{prop:amphi}
For every $n \in \mathbb{N}$, the knot $\#^n(K_a,  \tau)$ is not equivariantly slice.
\end{proposition}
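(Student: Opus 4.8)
The plan is to invoke the rational form of Proposition~\ref{prop:equivslice}. If $\#^n(K_a,\tau)$ were equivariantly slice, there would be a $\tau_*$-invariant submodule $P\le\A(\#^n K_a)$ with $\Bl(x,y)=0$ for all $x,y\in P$ and $|P|\cdot\ol{|P|}=|\A(\#^n K_a)|$. Following the same pattern as in Corollary~\ref{cor:notslice} and Theorem~\ref{theorem:on-ker-F--in-body}, I would reach a contradiction by proving that $\Bl(x,\tau_*(x))\neq 0$ for every nonzero $x\in\A(\#^n K_a)$. Granting this, for $x\in P$ the $\tau_*$-invariance gives $\tau_*(x)\in P$, whence $\Bl(x,\tau_*(x))=0$ and so $x=0$; thus $P=0$, contradicting $|P|\cdot\ol{|P|}=|\A(\#^n K_a)|=p_a(t)^n\neq 1$.

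To establish the nonvanishing, I would first identify $\A(\#^n K_a)\cong\mathbb{F}^n$, where $\mathbb{F}:=\Qt/(p_a)$ is the field of Lemma~\ref{lem:irreducible-poly}, each factor generated by its $b_2$ with $\tau_*(q(t)b_2)=q(t^{-1})b_2$. A direct computation from Proposition~\ref{prop:alexandblanch} gives $\Bl(b_2,b_2)=-a(t-1)^2/p_a(t)$, whose numerator is coprime to $p_a$ (as $a\neq 0$ and $p_a(1)=\pm1\neq 0$), so this is a nonzero element of $\Q(t)/\Qt$. Writing a general element as $x=(q_1(t)b_2,\dots,q_n(t)b_2)$ and using sesquilinearity together with the fact that $\tau_*$ fixes each generator $b_2$ (so that $\ol{\ol{q_i}}=q_i$ reintroduces an honest square rather than a norm), one obtains
\[\Bl(x,\tau_*(x))=\Big(\sum_{i=1}^n q_i(t)^2\Big)\cdot\frac{-a(t-1)^2}{p_a(t)}.\]
By coprimality and Lemma~\ref{lemma:elem-fact-coprime}, this vanishes in $\Q(t)/\Qt$ exactly when $\sum_{i=1}^n q_i(t)^2\equiv 0\pmod{p_a}$, i.e.\ when $\sum_{i=1}^n\xi_i^2=0$ in $\mathbb{F}$, where $\xi_i$ denotes the class of $q_i$.

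The crux, and the step I expect to be the main obstacle, is therefore to show that the sum-of-squares form is anisotropic over $\mathbb{F}$, equivalently that $\mathbb{F}$ is formally real. For this I would record that the roots of $p_a$ form a reciprocal pair of \emph{positive reals} (their product is $a^2/a^2=1$ and the discriminant is $4a^2+1>0$), so that $\mathbb{F}$ is a real quadratic field; note that the reciprocity of the two roots is precisely what matches the involution $t\mapsto t^{-1}$. Fixing a real embedding $\mathbb{F}\hookrightarrow\R$, a vanishing sum of squares forces every term to vanish, so $\xi_i=0$ for all $i$ and hence $x=0$. This completes the nonvanishing and with it the proof. I would emphasise that the positivity of the discriminant---equivalently the formal reality of $\mathbb{F}$---is exactly what drives the argument: it is because $\tau_*$ fixes the generator that the obstructing form is a genuine sum of squares, so that the real-quadratic nature of $\mathbb{F}$ rules out nontrivial isotropic vectors for every $n$ simultaneously.
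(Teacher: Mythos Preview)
Your proof is correct and follows a genuinely different route from the paper's.

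The paper argues as follows: after disposing of odd $n$ (where $\#^n K_a$ is concordant to $K_a$, which is not even slice), for $n=2m$ it analyses a hypothetical $\tau_*$-invariant metabolizer $H$ directly. Using that $\mathbb{F}=\Qt/(p_a)$ is a field, it puts an $\mathbb{F}$-basis for $H$ in echelon form, and then shows that $\tau_*$-invariance forces a basis vector to take the shape $x_1=(1,0,\dots,0,c_1,\dots,c_m)$ with $c_i\in\Q$ (not merely in $\mathbb{F}$). It then computes $\Bl(x_1,x_1)=(1+\sum c_i^2)\Bl(1,1)\neq 0$, using only the formal reality of~$\Q$.

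Your approach instead proves the strictly stronger statement that $\Bl(x,\tau_*(x))\neq 0$ for every nonzero $x$, placing $K_a$ on exactly the same footing as the algebraically slice knots of Proposition~\ref{prop:genusone}. This is cleaner and yields more: fed into Theorem~\ref{thm-main-obstruction-intro} it gives $\wt g_4(\#^n(K_a,\tau))\geq n/4$, not merely non-sliceness. The price is that you must invoke formal reality of $\mathbb{F}$ rather than of~$\Q$, which is exactly where your discriminant check enters.

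One caution worth flagging: the displayed polynomial $p_a(t)=a^2t^2-(2a^2-1)t+a^2$ in the paper appears to contain a sign error (for $a=1$ it gives $t^2-t+1$ rather than the figure-eight polynomial $t^2-3t+1$); the Seifert matrix actually produces $p_a(t)=a^2t^2-(2a^2+1)t+a^2$. Your discriminant $4a^2+1$ is correct for the latter, and your argument genuinely depends on this: with the typo'd polynomial the discriminant would be $1-4a^2<0$, the field $\mathbb{F}$ would be imaginary quadratic, and the sum-of-squares form would become isotropic for $n$ large enough. So your approach is sound, but it hinges on getting $p_a$ right.
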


For $a=1,2$ we already know using Sakuma's computations of the $\eta$ invariant~\cite{Sakuma} that the knots $\#^n(K_a,  \tau)$ are not equivariantly slice.

\begin{proof}
First,  note that when $n$ is odd,  $\#^n K_a$ is concordant to $K_a$,  which is not slice. So we can assume that $n=2m$ is even.

Now let $H \leq \A(\#^n K_a)$ be a $(\tau_*)$-invariant submodule of order $p_a(t)^m$; that is,  a potential $(\tau_*)$-invariant metabolizer for the Blanchfield pairing. Since
\[ \A(\#^n K_a)\cong \bigoplus^n \A(K_a)= \bigoplus^n \Q[t^{\pm1}]/  p_a(t),\]
we know that $H \cong \left(  \Qt/  p_a(t)  \right)^m$.

Furthermore,  after rearranging our summands if necessary, the submodule $H \leq \A(K_a)$ is generated as a $\Qt/ p_a(t)$-module by
\begin{align*}
x_1 &=(1,0,\dots,0, q_1^1, \dots, q_1^m)\\
x_2 &= (0,1,0, \dots, 0, q_2^1, \dots, q_2^m)\\
&\phantom{n} \vdots   \\
x_m &= (0, \dots, 0, 1 , q_m^1, \dots, q_m^m)
\end{align*}
for some $q_i^j \in \Q[t^{\pm1}]/ \langle p_a(t) \rangle$, $1 \leq i, j \leq m$.
This  relies on the fact that since $p_a(t)$ is irreducible by Lemma~\ref{lem:irreducible-poly},   $\Qt/ p_a(t)$ is a field and so $\A(K) \cong (\Qt/ p_a(t))^n$ is a vector space. Therefore the existence of a generating set of this form follows from some elementary linear algebra.

Write each $q_1^j= c_j + d_j t$ for some $c_j, d_j \in \Q$.
Observe that
\begin{align*}
(\#^n \tau)_*(x_1)&= (\tau_*(1), \tau_*(0),\dots,\tau_*(0), \tau_*(q_1^1), \dots, \tau_*(0)(q_1^m))\\
&= (\tau_*(1), \tau_*(0),\dots,\tau_*(0), \tau_*(c_1+d_1t), \dots, \tau_*(0)(c_m + d_mt))\\
&= (1, 0, \dots, 0, c_1 + d_1 t^{-1},  \dots, c_m + d_m t^{-1}).
\end{align*}
Since $(\#^n \tau)_*(x_1) \in H$,  we can write $(\#^n \tau)_*(x_1)= \sum_{i=1}^n r_i(t) x_i$ for some $r_i(t) \in \Q[t^{\pm1}]/ \langle p_a(t) \rangle$.  Considering our expressions for $x_1, \dots, x_m$ and for $(\#^n \tau)_*(x_1)$ and looking at the first $m$ coordinates,  we obtain that$r_1(t)=1$ and $r_2(t)=\cdots=r_m(t)=0$.   Looking at the last $m$ coordinates, we can conclude that $d_1=\cdots=d_m=0$.

So $x_1= (1, 0, \dots, 0, c_1, c_2, \dots, c_m)$ for some $c_i \in \Q$.
But we can now show that $\Bl_{\#^n K_a}(x_1,x_1) \neq 0$,  and hence that $H$ is not a metabolizer:
\begin{align*}
 \Bl_{\#^n K_a}(x_1,x_1) = \Bl_{K_a}(1,1)+ \sum_{i=1}^m \Bl_{K_a}(c_i, c_i)= \Big(1+ \sum_{i=1}^m c_i^2 \Big) \Bl_{K_a}(1,1).
\end{align*}
Since $\A(K) \cong \Q[t^{\pm1}]/ \langle p_a(t) \rangle$ and $\Bl_{K_a}$ is nondegenerate,  we have that $\Bl_{K_a}(1,1)$ is nonzero.  (Of course,  we could also directly compute this using the Seifert matrix and Proposition~\ref{prop:alexandblanch}.)
Therefore $ \Bl_{\#^n K_a}(x_1,x_1)$ is nonzero as well.
\end{proof}

\section{A lower bound on the equivariant 4-genus}\label{section:lower-bound}

Now we switch our attention to proving the lower bound from Theorem~\ref{thm-main-obstruction-intro}, which will lead to the proof of Theorem~\ref{thm:intro-arb-large} when combined with the computation in Proposition~\ref{prop:genusone}.

\subsection{Construction of the 4-manifold \texorpdfstring{$Z$}{Z} and its properties}\label{section:construction-of-Z}

As in the proof of Proposition~\ref{prop:equivslice},  we extensively consider the kernel of the inclusion induced map $H_1(E_K; \Lambda) \to H_1(E_F; \Lambda)$,  where $F$ is some locally flat surface in $D^4$ with boundary $K$.  However,  it will simplify our arguments to work with the closed 3-manifold  $M_K$  and an associated 4-manifold $Z$ with $\partial Z= M_K$ instead.

\begin{proposition}\label{prop:factsaboutZ}
Let $(K,\tau)$ be a strongly invertible knot in $S^3$ bounding a genus $g$ surface $F$ in $D^4$.
There exists a 4-manifold $Z$ with boundary $M_K$ such that the following hold.
\begin{enumerate}
\item \label{item:h1z} The inclusion-induced map $i_*\colon H_1(M_K; \Z) \to H_1(Z; \Z)$  is an isomorphism.
\item \label{item:relh1lambda} $H_1(Z,  M_K; \Lambda)$ and $H_1(Z; \Lambda)$ are torsion.
\item \label{item:h2zrank} The free part of $H_2(Z; \Lambda)$ has rank $2g$.
\item \label{item:kernelssame} The inclusion-induced map $i_* \colon H_1(E_K; \Lambda) \to H_1(M_K; \Lambda)$ is an isomorphism under which $\ker( H_1(E_K; \Lambda) \to H_1(E_F; \Lambda) )$ is mapped to $\ker( H_1(M_K; \Lambda) \to H_1(Z; \Lambda))$.
\item \label{item:kernelinvariant} If $\tau$ extends to an involution $\widehat{\tau}$ on $D^4$ such that $F=\widehat{\tau}(F)$,  then $H:=\ker( H_1(M_K; \Lambda) \to H_1(Z; \Lambda))$ is invariant, i.e.\ $\tau_*(H)=H$.
\end{enumerate}
\end{proposition}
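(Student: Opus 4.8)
\emph{The plan.} I would build $Z$ by altering the surface exterior $E_F:=D^4\setminus\nu F$ only near its ``vertical'' boundary. Fixing a tubular neighbourhood $\nu F\cong F\times D^2$, one has $\partial E_F = E_K\cup_{T^2}(F\times S^1)$, glued along $\partial E_K = K\times S^1=\partial F\times\partial D^2$. I would cap the boundary circle of $F$ with a disc to obtain a closed genus $g$ surface $\wh F = F\cup_K D^2$, choose a genus $g$ handlebody $V$ with $\partial V=\wh F$, and set
\[ Z := E_F\cup_{F\times S^1}(V\times S^1),\]
glued along $F\times S^1\subset\wh F\times S^1=\partial(V\times S^1)$. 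Since $\partial(V\times S^1)=(F\times S^1)\cup_{K\times S^1}(D^2\times S^1)$, this exchanges the $F\times S^1$ summand of $\partial E_F$ for the solid torus $D^2\times S^1$ whose meridian is $\partial F$; as $\partial F$ is the $0$-framed longitude, $\partial Z = E_K\cup(D^2\times S^1)=M_K$, as required.

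For the homological properties I would use the long exact sequence of $(Z,E_F)$ together with the excision $H_*(Z,E_F;-)\cong H_*(V\times S^1,F\times S^1;-)$. With $\Z$-coefficients $H_1(E_F;\Z)\cong\Z$ (the meridian of $F$) and the attaching is along nullhomologous tori, giving $H_1(Z;\Z)\cong\Z\cong H_1(M_K;\Z)$, which is item \eqref{item:h1z}. With $\Lambda$-coefficients the meridian of $F$ acts as $t$ while the $F$- and $V$-directions act trivially, so $H_*(V\times S^1,F\times S^1;\Lambda)\cong H_*(V,F;\Q)$, concentrated in degree $2$ and equal to $H_2(V,F;\Q)=\ker(H_1(F;\Q)\to H_1(V;\Q))\cong\Q^g$, annihilated by $t-1$ and hence $\Lambda$-torsion. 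Therefore $H_2(E_F;\Lambda)\hookrightarrow H_2(Z;\Lambda)$ has $\Lambda$-torsion cokernel, so the two share the same free rank. A rank-one local-system Euler characteristic computation gives $\chi(E_F;\Q(t))=\chi(E_F)=2g$, and since $H_0,H_1,H_3$ of $E_F$ are $\Lambda$-torsion (standard Alexander-module vanishing together with duality), the free rank of $H_2(E_F;\Lambda)$, hence of $H_2(Z;\Lambda)$, is $2g$; this is item \eqref{item:h2zrank}. Item \eqref{item:relh1lambda} then follows from the same sequence ($H_1(Z;\Lambda)$ is a quotient of the torsion module $H_1(E_F;\Lambda)$) together with Poincar\'e--Lefschetz duality $H_1(Z,M_K;\Lambda)\cong H^3(Z;\Lambda)$ and the vanishing of the $\Q(t)$-rank in degree $3$.

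For item \eqref{item:kernelssame} I would use that $V\times S^1$ is attached along $F\times S^1$, which is disjoint from $E_K$, so the inclusions fit into a commuting diagram $E_K\hookrightarrow E_F\hookrightarrow Z$ and $E_K\hookrightarrow M_K\hookrightarrow Z$ inducing the same map $H_1(E_K;\Lambda)\to H_1(Z;\Lambda)$. Granting that $H_1(E_F;\Lambda)\to H_1(Z;\Lambda)$ is injective, both $\ker(H_1(E_K;\Lambda)\to H_1(E_F;\Lambda))$ and $\ker(H_1(M_K;\Lambda)\to H_1(Z;\Lambda))$ equal $\ker(H_1(E_K;\Lambda)\to H_1(Z;\Lambda))$ under the isomorphism $H_1(E_K;\Lambda)\cong H_1(M_K;\Lambda)$. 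Item \eqref{item:kernelinvariant} I would then deduce directly from \eqref{item:kernelssame} rather than extending $\wh\tau$ over $Z$: when $\wh\tau(F)=F$ I can choose $\nu F$ equivariantly, so $\wh\tau$ preserves $E_F$ and commutes with the inclusion $E_K\hookrightarrow E_F$, and exactly as in the proof of Proposition~\ref{prop:equivslice} this forces $\ker(H_1(E_K;\Lambda)\to H_1(E_F;\Lambda))$ to be $\tau_*$-invariant. The identification of \eqref{item:kernelssame} then shows $H=\ker(H_1(M_K;\Lambda)\to H_1(Z;\Lambda))$ is $\tau_*$-invariant.

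The hard part will be the injectivity of $H_1(E_F;\Lambda)\to H_1(Z;\Lambda)$ used in item \eqref{item:kernelssame}. From the long exact sequence its kernel is the image of the connecting map $H_2(Z,E_F;\Lambda)\cong\Q^g\to H_1(E_F;\Lambda)$, which sends the class of the $j$-th meridian disc of $V$ to the class of the compressing curve $c_j\times\{pt\}\subset F\times S^1$. So the crux is to choose the handlebody $V$, equivalently the compressing system $c_1,\dots,c_g$, so that each $[c_j\times\{pt\}]$ vanishes in $H_1(E_F;\Lambda)$; these curves are already $\Z$-nullhomologous, and I expect a suitable half-basis to bound surfaces in $E_F$ that lift to the infinite cyclic cover, yielding the required $\Lambda$-nullhomology. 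Verifying this vanishing is where the real work lies; note that item \eqref{item:h2zrank} is unaffected by it (the relative group is $\Lambda$-torsion regardless), and the remaining items are then bookkeeping with the long exact sequence and duality.
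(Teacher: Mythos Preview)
Your construction of $Z$ coincides with the paper's, and your treatment of items~\eqref{item:h1z}, \eqref{item:relh1lambda}, \eqref{item:h2zrank}, and \eqref{item:kernelinvariant} is correct, with only cosmetic differences (the paper computes $\chi(Z)$ directly rather than passing through $E_F$, and uses the COT bound $\dim_{\Q(t)}H_1\leq b_1-1$ applied to $Z$ itself, but your route via $E_F$ works equally well).

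The genuine gap is in item~\eqref{item:kernelssame}. You reduce it to the injectivity of $j_*\colon H_1(E_F;\Lambda)\to H_1(Z;\Lambda)$ and propose to arrange this by choosing the compressing curves $c_1,\dots,c_g$ so that each $[c_j\times\{1\}]$ vanishes in $H_1(E_F;\Lambda)$. But there is no reason to expect such a half-basis to exist: the map $H_1(F;\Q)\to H_1(E_F;\Lambda)$ lands in the $(t-1)$-torsion of $H_1(E_F;\Lambda)$, and its kernel need not have dimension at least $g$, nor, even if it does, need it contain a Lagrangian for the intersection form on $F$. So this route may simply be blocked, and you correctly flag it as unresolved.

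The point is that full injectivity of $j_*$ is not needed, and the paper bypasses the issue with a coprimality argument that works for \emph{any} choice of handlebody. Writing $f_*\colon H_1(E_K;\Lambda)\to H_1(E_F;\Lambda)$, one only needs $\ker(j_*)\cap\im(f_*)=0$. From the Mayer--Vietoris sequence for $Z=E_F\cup_{S^1\times F}(S^1\times V)$ one sees that $\ker(j_*)$ is contained in the image of $H_1(S^1\times F;\Lambda)\cong(\Lambda/(t-1))^{2g}$, hence is annihilated by $t-1$. On the other hand, every element of $\im(f_*)$ is annihilated by $\Delta_K(t)$. Since $\Delta_K(1)=\pm 1$, the polynomials $t-1$ and $\Delta_K(t)$ are coprime in $\Lambda$, so anything killed by both is zero. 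This immediately yields $\ker(j_*)\cap\im(f_*)=0$ and hence item~\eqref{item:kernelssame}, with no constraint whatsoever on the handlebody. The paper in fact states explicitly that the choice of $V$ is arbitrary and does not affect the homological properties used.
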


For the proof of Proposition~\ref{prop:factsaboutZ} we will need the following special case of  \cite[Propositions~2.9~and~2.11]{COT1}.

\begin{proposition}\label{prop:h0h1cot}
Let $X$ be a $($space with the homotopy type of a$)$ finite CW complex, and let $\phi \colon \pi_1(X) \to \Z$ be a nontrivial representation.
Then $H_0(X; \Q(t))=0$
and $\dim_{\Q(t)} H_1(X; \Q(t)) \leq b_1(X)-1$.
\end{proposition}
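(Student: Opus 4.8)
The plan is to work entirely with the cellular chain complex of the universal cover $\widetilde X$, viewed as a complex of free $\Z[\pi_1(X)]$-modules, and to compare two coefficient systems: the one $\Lambda = \Qt \to \Q(t)$ induced by $\phi$, and the augmentation $\Lambda \to \Q$, $t \mapsto 1$, which recovers ordinary rational homology. Fix a finite CW structure on $X$ with $c_i$ cells in dimension $i$. Choosing lifts of the cells gives boundary matrices $\partial_i \in M_{c_{i-1} \times c_i}(\Lambda)$ such that $H_*(X;\Q(t))$ is the homology of $\Q(t) \otimes_\Lambda (C_*, \partial_*)$, while $H_*(X;\Q)$ is the homology of the complex obtained by the substitution $t = 1$; I will write $\partial_i(1)$ for this specialization.

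First I would dispatch $H_0$. Since $\phi$ is nontrivial, pick $g \in \pi_1(X)$ with $\phi(g) = k \neq 0$; then $g$ acts on the coefficients by multiplication by $t^k$, and $H_0(X;\Q(t))$ is the module of coinvariants, a quotient of $\Q(t)$ in which $(t^k - 1)v = 0$. As $t^k - 1$ is a nonzero element of the field $\Q(t)$, it is invertible, so the coinvariants vanish and $H_0(X;\Q(t)) = 0$. Equivalently, $H_0(X;\Lambda) \cong \Lambda/(t^d - 1)$ with $d$ a generator of $\im \phi$ is $\Lambda$-torsion, so it dies upon localizing to $\Q(t)$.

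For the rank bound I would exploit that $H_0(X;\Q(t)) = 0$ forces $\partial_1$ to be surjective over $\Q(t)$, whence $\dim_{\Q(t)} \ker \partial_1 = c_1 - c_0$ and therefore
\[ \dim_{\Q(t)} H_1(X;\Q(t)) = (c_1 - c_0) - \operatorname{rank}_{\Q(t)} \partial_2. \]
Running the identical bookkeeping for ordinary rational homology, where $H_0(X;\Q) = \Q$ gives $\dim_\Q \ker \partial_1(1) = c_1 - c_0 + 1$, yields
\[ b_1(X) = (c_1 - c_0 + 1) - \operatorname{rank}_\Q \partial_2(1). \]
Subtracting these two identities shows that the desired inequality $\dim_{\Q(t)} H_1(X;\Q(t)) \leq b_1(X) - 1$ is equivalent to the single assertion $\operatorname{rank}_{\Q(t)} \partial_2 \geq \operatorname{rank}_\Q \partial_2(1)$.

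The hard part — really the only content — is this last rank comparison, so I would isolate it as the crux, although it is elementary. The rank over $\Q(t)$ of a matrix over $\Lambda$ equals the largest size of a minor that is not identically zero as a Laurent polynomial, whereas $\operatorname{rank}_\Q \partial_2(1)$ is the largest size of a minor of $\partial_2$ whose value at $t = 1$ is nonzero. Since a minor that is nonzero at $t = 1$ is a fortiori not identically zero, every nonvanishing minor of $\partial_2(1)$ lifts to a nonvanishing minor of $\partial_2$; hence $\operatorname{rank}_{\Q(t)} \partial_2 \geq \operatorname{rank}_\Q \partial_2(1)$, which completes the argument.
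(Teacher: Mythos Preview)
Your proof is correct. Note that the paper does not actually prove this proposition: it is quoted as a special case of \cite[Propositions~2.9~and~2.11]{COT1} and used as a black box in the proof of Proposition~\ref{prop:factsaboutZ}. Your argument is essentially the standard one underlying the cited results --- compare the twisted and untwisted cellular chain complexes, and use that the rank of a matrix over $\Lambda$ can only drop under the specialisation $t \mapsto 1$ --- just written out concretely for the case of a representation to $\Z$ and the field $\Q(t)$, rather than in the generality of PTFA groups and Ore localisations treated in \cite{COT1}. One small point worth making explicit: you are tacitly assuming $X$ is connected when you write $H_0(X;\Q) = \Q$ and identify $H_0(X;\Q(t))$ with a single module of coinvariants; this is harmless here since every application in the paper is to connected spaces, but strictly speaking the proposition as stated does not include that hypothesis.
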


\begin{proof}[Proof of Proposition~\ref{prop:factsaboutZ}]
Define
\[ Z:= (D^4 \smallsetminus \nu(F)) \bigcup_{S^1 \times F} (S^1 \times H), \]
 where $H$ is a genus $g$ handlebody with boundary $\partial H= F \cup D^2$.  To make this gluing we choose a framing of the normal bundle of $F$ in $D^4$ such that for each simple closed curve $\alpha \subseteq F$, the curve $\alpha \times \{1\} \subseteq S^1 \times F \subseteq D^4\smallsetminus \nu F$ is null-homologous.  There is also a choice of precisely which handlebody $H$ we choose to fill $F \cup D^2$.   We make an arbitrary choice here; this does not affect the homological properties of $Z$ that we will use.  A Mayer-Vietoris argument establishes item~\eqref{item:h1z},  as well as the fact that $H_2(Z) \cong \Z^{2g}$.   We note for later use that item~\eqref{item:h1z} also implies that $H_3(Z; \Q) \cong H^3(Z; \Q) \cong H_1(Z, M_K; \Q) \cong 0$,  and so $\chi(Z)= 1-1+2g+0+0=2g$.

In order to establish items  \eqref{item:relh1lambda} and \eqref{item:h2zrank},  by the flatness of $\Q(t)$ as a $\Lambda$-module it suffices to show that $H_1(Z, M_K; \Q(t))=0$ and $H_2(Z; \Q(t)) \cong \Q(t)^{2g}$.   By Proposition~\ref{prop:h0h1cot},  we have that $H_i(M_K, \Q(t))=0=H_i(Z; \Q(t))$ for $i=0,1$.  It follows from the long exact sequence of $(Z, M_K)$ with $\Q(t)$ coefficients that $H_1(Z, M_K; \Q(t))=0$ as desired for item \eqref{item:relh1lambda}.
Item \eqref{item:h2zrank} now quickly follows from an Euler characteristic computation for $Z$ using $\Q(t)$ coefficients.  We  already know $H_0(Z; \Q(t))=0=H_1(Z, \Q(t))$.  Additionally,  for $i=3,4$ we have that $H_i(Z; \Q(t))= H^i(Z; \Q(t))= H_{4-i}(Z, M_K; \Q(t))=0$,  where the first equality comes from universal coefficients and the second from Poincar{\'e} duality.  We are now ready to recover item \eqref{item:h2zrank},  since
\begin{align*}
 2g&= b_0^{\Q(t)}(Z)- b_1^{\Q(t)}(Z)+b_2^{\Q(t)}(Z)-b_3^{\Q(t)}(Z)+b_4^{\Q(t)}(Z)\\
 &= 0-0+\dim H_2(Z; \Q(t))-0+0.
 \end{align*}

We now wish to establish item \eqref{item:kernelssame}.  Recall that
$M_K= E_K \cup_{T^2} (S^1 \times D^2)$.  Since $H_1(T^2; \Lambda)= H_1(S^1 \times D^2; \Lambda)=0$ and $H_0(T^2; \Lambda) \to H_0(S^1 \times D^2; \Lambda)$ is an isomorphism,  the Mayer-Vietoris sequence for $M_K$ immediately implies that $i_* \colon H_1(E_K; \Lambda) \to H_1(M_K; \Lambda)$ is an isomorphism.
Now recall that $Z= E_F \cup_{S^1 \times F} (S^1 \times H)$,  where $H$ is a genus $g$ handlebody with $\partial H= F \cup D^2$.  This decomposition is compatible with that of $M_K$,  and so we obtain the following commutative diagram,  where all maps are induced by inclusion:
\begin{center}
\begin{tikzcd}\label{bigbox}
H_1(E_K; \Lambda) \arrow[r, "i_*"] \arrow[d, "f_*"]
&  H_1(M_K; \Lambda)= H_1(E_K \cup (S^1 \times D^2); \Lambda) \arrow[d, "g_*"] \\
H_1(E_{F}; \Lambda) \arrow[r, "j_*"] & H_1(Z; \Lambda)= H_1(E_{F} \cup (S^1 \times H); \Lambda).
\end{tikzcd}
\end{center}
We wish to show that $\ker(g_*)= i_*(\ker(f_*))$.   One containment is immediate: for $y \in i_*(\ker(f_*))$ write $y= i_*(x)$ for $x \in \ker(f_*)$ and observe that  $y \in \ker(g_*)$,  since $g_*(y)= g_*i_*(x)=j_*f_*(x)=j_*(0)=0$.

Now let $y \in \ker(g_*)$ and,  recalling that $i_*$ is a isomorphism,  let $x \in H_1(E_K; \Lambda)$ be such that $i_*(x)=y$ in order to show that $f_*(x)=0$.
Since $j_*f_*(x)= g_*i_*(x)=g_*(y)=0$,  we certainly have that $f_*(x)$ is in $\ker(j_*)$.  Now consider the following portion of the Mayer-Vietoris sequence for $Z$:
\[ H_1(S^1 \times F; \Lambda) \to H_1(E_{F}; \Lambda) \oplus H_1(S^1 \times H; \Lambda) \to H_1(Z; \Lambda).\]
Since this sequence is exact and $f_*(x)$ maps to $0$ in $H_1(Z; \Lambda)$,  we can conclude that $f_*(x) \in \im(H_1(S^1 \times F; \Lambda) \to H_1(E_{F}; \Lambda))$.
One can compute directly that
\[H_1(S^1 \times F; \Lambda)= H_1(\R \times F; \Q) \cong \left(\lambda/ \langle t-1 \rangle\right)^{2g(F)},\]
and hence that $f_*(x)$ is annihilated by $t-1$.
But since the order of $H_1(E_K; \Lambda)$ is $\Delta_K(t)$,  it is also true that $f_*(x)$ is annihilated by $\Delta_K(t)$.
Note that $\Delta_K(t)$ and $t-1$ are relatively prime. (To see this, it is straightforward to find a $p \in \Zt$ such that $\Delta_K(t) + p(t-1) = m \in \Z$. Then use that $\Delta_K(1) = \pm 1$.) Therefore since $f_*(x)$ is annihilated by relatively prime polynomials, we have as desired that $f_*(x)=0$. This completes the proof of item \eqref{item:kernelssame}.

To prove item \eqref{item:kernelinvariant},  suppose that $\tau$ extends to an involution $\widehat{\tau}$ on $D^4$ such that $F=\widehat{\tau}(F)$.
It follows immediately that
\begin{align*}
\ker(H_1(E_K; \Lambda) \to H_1(E_F; \Lambda))= \ker(H_1(E_K; \Lambda) \to H_1(E_{\widehat{\tau}(F)}; \Lambda)).
\end{align*}
Note that for any surface $G$ in $D^4$ with $\partial G= K$ we have
\[  \ker(H_1(E_K; \Lambda) \to H_1(E_{\widehat{\tau}(G)}; \Lambda))= \tau_*(\ker(H_1(E_K; \Lambda) \to H_1(E_{G}; \Lambda))).\]
Therefore,  $\ker(H_1(E_K; \Lambda) \to H_1(E_F; \Lambda))$ is $\tau_*$-invariant.  However,  by item $(4)$ we know that $\ker(H_1(E_K; \Lambda) \to H_1(E_F; \Lambda))$ is identified with $\ker(H_1(M_K; \Lambda) \to H_1(Z; \Lambda))$ via the inclusion induced map,  which is compatible with $\tau_*$,  and so we get our desired result.
\end{proof}

\subsection{Blanchfield forms and generating rank}

The proof of the next proposition is closely related to a standard argument, but since we need a slight variation we give the details.

\begin{proposition}\label{prop:vanishingofbl}
Let $K$ be a knot in $S^3$ with zero surgery $M_K$,  and suppose $Z$ is a 4-manifold with $\partial Z= M_K$ such that $i_* \colon H_1(M_K) \to H_1(Z)$ is an isomorphism.  Suppose that $H_1(Z; \Lambda)$ is $\Lambda$-torsion.
Then for every $x \in TH_2(Z, M_K; \Lambda)$ and every $y \in \ker(i_* \colon H_1(M_K; \Lambda) \to H_1(Z; \Lambda))$ we have
$\Bl(\partial x,y)=0.$
\end{proposition}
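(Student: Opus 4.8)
The plan is to reduce the vanishing of $\Bl(\partial x, y)$ to a Kronecker-pairing computation inside the pair $(Z, M_K)$, and then to exploit the torsion of $x$ only at the very end. Recall that $\Bl(\partial x, y) = \Theta(y)(\partial x)$, where $\Theta(y)$ is obtained from $y$ by applying Poincar\'e duality, inverting the Bockstein $B$ of the coefficient sequence $0 \to \Lambda \to \Q(t) \to \Q(t)/\Lambda \to 0$, and evaluating. Writing $\theta_y := B^{-1}(\PD(y)) \in H^1(M_K; \Q(t)/\Lambda)$, so that $\Theta(y)(-) = \langle \theta_y, -\rangle$ is the Kronecker pairing, we have $\Bl(\partial x, y) = \langle \theta_y, \partial x\rangle$. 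First I would push this across the connecting map of the pair: the homology connecting map $\partial \colon H_2(Z, M_K; \Lambda) \to H_1(M_K; \Lambda)$ is adjoint to the cohomology connecting map $\delta \colon H^1(M_K; \Q(t)/\Lambda) \to H^2(Z, M_K; \Q(t)/\Lambda)$ with respect to the Kronecker pairing, so that $\Bl(\partial x, y) = \pm \langle \delta\theta_y, x\rangle$, now evaluated on $x \in H_2(Z, M_K; \Lambda)$.

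The heart of the argument is to show that $\delta\theta_y$ is the reduction of a class with $\Q(t)$ coefficients. Consider the coefficient Bockstein $\beta \colon H^2(Z, M_K; \Q(t)/\Lambda) \to H^3(Z, M_K; \Lambda)$, whose kernel is exactly the image of $H^2(Z, M_K; \Q(t))$. Since the coefficient Bockstein and the connecting map of the pair commute up to sign, and since $B\theta_y = \PD(y)$ by construction, I would compute $\beta(\delta\theta_y) = \pm\,\delta(\PD(y))$, where now $\delta \colon H^2(M_K; \Lambda) \to H^3(Z, M_K; \Lambda)$. Now the hypothesis $y \in \ker i_*$ enters: by exactness $\ker i_* = \im(\partial)$, and the square relating $\PD_Z$, $\PD_{M_K}$, the homology connecting map $\partial$, and the restriction $j^* \colon H^2(Z; \Lambda) \to H^2(M_K; \Lambda)$ commutes, so $y \in \im\partial$ translates into $\PD(y) \in \im(j^*)$. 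By exactness of the cohomology sequence of the pair with $\Lambda$ coefficients, $\im(j^*) = \ker(\delta)$, hence $\delta(\PD(y)) = 0$. Therefore $\beta(\delta\theta_y) = 0$, and $\delta\theta_y$ lifts to some $c' \in H^2(Z, M_K; \Q(t))$.

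Finally I would invoke the torsion of $x$. By naturality of the Kronecker pairing under the coefficient map $\Q(t) \to \Q(t)/\Lambda$, the value $\langle \delta\theta_y, x\rangle \in \Q(t)/\Lambda$ is the reduction of $\langle c', x\rangle \in \Q(t)$. Since $x \in TH_2(Z, M_K; \Lambda)$, there is a nonzero $p \in \Lambda$ with $px = 0$, whence $p \cdot \langle c', x\rangle = \langle c', px\rangle = 0$ in the field $\Q(t)$, forcing $\langle c', x\rangle = 0$ and hence $\langle \delta\theta_y, x\rangle = 0$. Combined with the first paragraph this gives $\Bl(\partial x, y) = 0$, as desired. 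This is precisely the step where the restriction to torsion $x$ is used, and it is exactly the mild variation on the standard metabolizer argument that the proposition requires; for general $x$ the pairing with the free part of $H_2(Z; \Lambda)$ need not vanish.

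The main obstacle is bookkeeping rather than conceptual: I must assemble a single commutative diagram intertwining Poincar\'e--Lefschetz duality for $Z$ and for $M_K$, the long exact sequences of the pair $(Z, M_K)$ in homology and cohomology, and the Bockstein of the coefficient sequence, and then verify the two compatibilities used above, namely the adjunction of $\partial$ and $\delta$ under the Kronecker pairing and the (anti)commutativity of the coefficient Bockstein with the connecting map of the pair. Signs are irrelevant here since we only need vanishing, which lightens the verification considerably. I would also confirm that $B$ is an isomorphism on $M_K$, so that $\theta_y$ is well-defined; this follows from the Alexander module being $\Lambda$-torsion, exactly as in the definition of $\Bl$ recalled earlier.
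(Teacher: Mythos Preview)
Your proof is correct, but it takes a different route from the paper's. The paper builds a single commutative ladder
\[
\begin{tikzcd}
TH_2(Z,M_K;\Lambda) \ar[r,"\partial|_T"] \ar[d,"\PD^{-1}"] & H_1(M_K;\Lambda) \ar[d,"\PD^{-1}"] \\
TH^2(Z;\Lambda) \ar[r,"i^*"] \ar[d,"B^{-1}"] & H^2(M_K;\Lambda) \ar[d,"B^{-1}"] \\
H^1(Z;\Q(t)/\Lambda) \ar[r,"i^*"] \ar[d,"\kappa"] & H^1(M_K;\Q(t)/\Lambda) \ar[d,"\kappa"] \\
\Hom(H_1(Z;\Lambda),\Q(t)/\Lambda) \ar[r,"i_*^{\wedge}"] & \Hom(H_1(M_K;\Lambda),\Q(t)/\Lambda)
\end{tikzcd}
\]
and pushes $x$ down the \emph{left} column to obtain $\beta(x)\in\Hom(H_1(Z;\Lambda),\Q(t)/\Lambda)$; commutativity gives $\Bl(\partial x)(y)=\beta(x)(i_*(y))=0$ directly. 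Here the torsion hypothesis on $x$ is used only to guarantee that $\PD^{-1}(x)$ lies in $TH^2(Z;\Lambda)$, where $B^{-1}$ is defined (this uses $H^1(Z;\Q(t))=0$, a consequence of $H_1(Z;\Lambda)$ being torsion).

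By contrast, you work on the $y$ side: you carry $\theta_y$ across the connecting map $\delta$ into $H^2(Z,M_K;\Q(t)/\Lambda)$, use $y\in\ker i_*$ and compatibility of Poincar\'e duality with $\partial$ and $j^*$ to show $\delta\theta_y$ lifts to $\Q(t)$ coefficients, and only then invoke the torsion of $x$ to kill the resulting $\Q(t)$-valued pairing. The paper's argument is more economical (one diagram, one commutativity citation), while yours isolates more cleanly \emph{where} the torsion of $x$ is needed and avoids having to check that $B^{-1}$ is globally defined on $TH^2(Z;\Lambda)$. Both rest on the same underlying naturality facts, and since you only need vanishing the sign ambiguities you flag are indeed harmless.
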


\begin{proof}
Consider the following diagram,  recalling that $H_1(M_K; \Lambda)$ and $H_1(Z; \Lambda)$ are both $\Lambda$-torsion.
\begin{center}
\begin{tikzcd}\label{bigbox}
TH_2(Z, M_K; \Lambda) \arrow[r, "\partial |_T"] \arrow[d, "PD^{-1}"] \arrow[ddd,  bend right=60, "\beta"']
&  H_1(M_K;  \Lambda) \arrow[r, "i_*"] \arrow[d, "PD^{-1}"'] \arrow[ddd, bend left=60, "\Bl"]
& H_1(Z; \Lambda) \\
TH^2(Z; \Lambda) \arrow[d, "B^{-1}"] \arrow[r,"i^*"] & H^2(M_K; \Lambda) \arrow[d, "B^{-1}"'] & \\
 H^1(Z; Q/\Lambda) \arrow[d,  "\kappa"] \arrow[r,"i^*"] & H^1(M_K; Q/\Lambda) \arrow[d,  "\kappa"'] & \\
\Hom(H_1(Z; \Lambda),  Q/ \Lambda) \arrow[r, "i_*^{\wedge}"] & \Hom(H_1(M; \Lambda), Q/ \Lambda). &
\end{tikzcd}\\
\end{center}
While the top row is not necessarily exact, we do have that $\im(\partial |_T) \subseteq \ker(i_*)$,  since
\[H_2(Z, M_K; \Lambda) \xrightarrow{\partial} H_1(M_K; \Lambda) \xrightarrow{i_*} H_1(Z; \Lambda)\]
is exact.  Moreover,  since all of the vertical maps are natural the diagram commutes.  This is straightforward for the Bockstein and universal coefficients, while \cite[Theorem~IV.9.2]{Br93} shows that the top square commutes.

Now let $x \in TH_2(Z, M_K; \Lambda)$ and $y \in \ker(i_* \colon H_1(M_K; \Lambda) \to H_1(Z; \Lambda)$.  We therefore have that
\[ \Bl(\partial x)(y)= i_*^{\wedge}(\beta(x))(y)= \beta(x)(i_*(y))= \beta(x)(0)=0.\]
The first equality comes from the commutativity of the diagram, the second equality from the definitional relationship between $i_*^{\wedge}$ and $i_*$,  and the last from our assumption on $y$.
\end{proof}

In order to effectively apply Proposition~\ref{prop:factsaboutZ},  we will need to show that $\partial(TH_2(Z, M_K; \Lambda))$ has large generating rank.  It will be useful to have the following facts about the generating rank of finitely generated modules over PIDs,  which follow from the fundamental theorem of finitely generated modules over PIDs; see also \cite[Lemma~4.1]{Miller-Powell-19}.

\begin{proposition} \label{prop:genrank}
Let $A, B$ be finitely generated modules over a PID $S$.
\begin{enumerate}
\item\label{item:subset} If $A \subseteq B$ then $\grk A \leq \grk B$.
\item \label{item:map} If $f \colon A \to B$ is a map of $S$-modules, then
\[ \grk \im(f) \leq \grk A \leq \grk \im(f)+ \grk \ker(f).\]
\end{enumerate}
\end{proposition}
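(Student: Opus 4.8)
The plan is to derive all three inequalities from two elementary facts about finitely generated modules over a PID $S$: first, that a quotient of a module needs no more generators than the module itself; and second, that a submodule of a free module $S^n$ is again free of rank at most $n$. The latter is exactly where the PID hypothesis (via the structure theorem, cf.\ \cite[Lemma~4.1]{Miller-Powell-19}) enters essentially, and I expect it to be the only genuinely nonformal point.

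For item~\eqref{item:subset}, I would let $n = \grk B$ and choose a surjection $\phi \colon S^n \onto B$ coming from a minimal generating set of $B$. Setting $A' := \phi^{-1}(A) \subseteq S^n$, the restriction $\phi|_{A'} \colon A' \to A$ is surjective because $A \subseteq B = \im \phi$. Since $A'$ is a submodule of the free module $S^n$, it is free of rank at most $n$, hence generated by at most $n$ elements; their images under $\phi$ then generate $A$, giving $\grk A \leq n = \grk B$.

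For the first inequality in item~\eqref{item:map}, I would note that $\im(f) \cong A/\ker(f)$ is a quotient of $A$, so the images of any generating set of $A$ generate $\im(f)$, whence $\grk \im(f) \leq \grk A$. For the second inequality, I would pick generating sets $\{k_1,\dots,k_s\}$ of $\ker(f)$ with $s = \grk \ker(f)$ and $\{y_1,\dots,y_t\}$ of $\im(f)$ with $t = \grk \im(f)$, lift each $y_j$ to some $a_j \in A$ with $f(a_j) = y_j$, and check that $\{k_1,\dots,k_s,a_1,\dots,a_t\}$ generates $A$: given $a \in A$, write $f(a) = \sum_j c_j y_j = f\big(\sum_j c_j a_j\big)$, so that $a - \sum_j c_j a_j \in \ker(f)$ is an $S$-combination of the $k_i$. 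This yields $\grk A \leq s + t$.

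The only step that uses more than formal bookkeeping is the appeal in item~\eqref{item:subset} to freeness of submodules of $S^n$. The naive alternative of reducing modulo a maximal ideal $\mathfrak{m}$ and comparing $\dim_{S/\mathfrak{m}} A/\mathfrak{m}A \leq \dim_{S/\mathfrak{m}} B/\mathfrak{m}B$ fails, since $(-)\otimes_S S/\mathfrak{m}$ is only right exact and an inclusion $A \into B$ need not remain injective after this reduction. I therefore expect this to be the one point worth stating carefully, and it is where the structure theorem for finitely generated modules over a PID is indispensable.
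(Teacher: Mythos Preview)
Your proof is correct and is precisely the standard unpacking of the reference the paper gives: the paper does not write out a proof but simply says these facts ``follow from the fundamental theorem of finitely generated modules over PIDs; see also \cite[Lemma~4.1]{Miller-Powell-19}.'' Your argument for item~\eqref{item:subset} via pulling back along a surjection $S^n \onto B$ and using that submodules of $S^n$ are free of rank at most $n$ is exactly the point where the PID hypothesis enters, and your treatment of item~\eqref{item:map} via lifting generators along the short exact sequence $0 \to \ker f \to A \to \im f \to 0$ is the expected elementary argument.
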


The next proposition is one of the key technical facts on generating ranks.

\begin{proposition}\label{prop:nontrivialitypartial}
Let $Z$ be a compact, oriented 4-manifold with boundary $\partial Z=M_K$ such that $i_* \colon H_1(M_K;\Z) \to H_1(Z;\Z)$ is an isomorphism.
Let $n$ be the $\Lambda$-rank of $H_2(Z;\Lambda)$ i.e.\ the free part of $H_2(Z; \Lambda)$ is isomorphic to $\Lambda^n$.
Assume that $H_1(Z, M_K; \Lambda)$ is torsion.
Then the generating rank of $\partial \left( TH_2(Z, M_K; \Lambda) \right)$ is at least $\frac{1}{2}\grk \A(K)- n.$
\end{proposition}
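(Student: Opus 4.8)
The plan is to exploit the nonsingularity of the Blanchfield pairing to show that $\ker(i_*)$ is exactly the Blanchfield-annihilator of $\partial\left(TH_2(Z,M_K;\Lambda)\right)$, which then forces the latter to have generating rank at least half that of $\A(K)$, up to the free-rank correction $n$. To set up notation, I would write $A := \A(K) \cong H_1(M_K;\Lambda)$, which is $\Lambda$-torsion, and set $T := TH_2(Z,M_K;\Lambda)$, $L := \partial(T)$, and $H := \partial\left(H_2(Z,M_K;\Lambda)\right)$; by exactness of the sequence of the pair, $H = \ker\big(i_* \colon H_1(M_K;\Lambda)\to H_1(Z;\Lambda)\big)$. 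For a submodule $N \le A$ I write $N^\perp := \{a\in A : \Bl(a,N)=0\}$.

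First I would record two preliminary facts. Since $A$ is torsion and $H_1(Z,M_K;\Lambda)$ is torsion by hypothesis, the long exact sequence shows $H_1(Z;\Lambda)$ is torsion. Tensoring the same sequence with the flat $\Lambda$-module $\Q(t)$ and using $H_1(M_K;\Q(t))=0=H_1(Z;\Q(t))$ together with $H_3(Z,M_K;\Q(t))\cong H^1(Z;\Q(t))=0$ shows that $H_2(Z;\Q(t))\to H_2(Z,M_K;\Q(t))$ is an isomorphism, so $H_2(Z,M_K;\Lambda)$ has $\Lambda$-rank $n$. Writing $H_2(Z,M_K;\Lambda)\cong T\oplus \Lambda^n$ and noting $H = L + \partial(\Lambda^n)$, Proposition~\ref{prop:genrank}\eqref{item:map} gives $\grk(H/L)\le n$, hence $\grk H \le \grk L + n$.

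The heart of the argument is the duality $L^\perp = H$, which I would extract from the commutative diagram used in the proof of Proposition~\ref{prop:vanishingofbl}. Commutativity of that diagram gives, for every $x\in T$ and every $y\in H_1(M_K;\Lambda)$, the identity $\Bl(\partial x, y) = \beta(x)(i_*(y))$, where $\beta\colon T \to \Hom_\Lambda\!\big(H_1(Z;\Lambda), \Q(t)/\Lambda\big)$ is the composite $\kappa\circ B^{-1}\circ PD^{-1}$ appearing there. Because $H_1(Z;\Lambda)$ is torsion, each of these three maps is an isomorphism onto the relevant target (Poincar\'e--Lefschetz duality, the Bockstein, and the universal coefficient evaluation, the last using that $\Q(t)/\Lambda$ is injective), so $\beta$ is surjective. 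Hence $y\in L^\perp$ if and only if $\beta(x)(i_*(y))=0$ for all $x\in T$, if and only if $i_*(y)=0$, i.e. $y\in H$; this gives $L^\perp = H$.

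Granting these, I would finish by elementary generating-rank arithmetic over the PID $\Lambda$. Nonsingularity of $\Bl$ gives $A/N^\perp \cong \overline{\Hom_\Lambda(N,\Q(t)/\Lambda)}$ for every submodule $N$, whence $\grk(A/N^\perp)=\grk N$ (using $\grk \Hom_\Lambda(N,\Q(t)/\Lambda)=\grk N$ for torsion $N$). Applying this with $N=L$, using $L^\perp=H$, and applying Proposition~\ref{prop:genrank}\eqref{item:map} to the quotient $A\twoheadrightarrow A/H$ yields
\[ \grk A \;\le\; \grk H + \grk(A/H) \;=\; \grk H + \grk(A/L^\perp) \;=\; \grk H + \grk L \;\le\; 2\grk L + n, \]
so $\grk L \ge \tfrac12\grk A - \tfrac{n}{2} \ge \tfrac12\grk A - n$, as required. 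The step I expect to be the main obstacle is the duality $L^\perp = H$: Proposition~\ref{prop:vanishingofbl} directly supplies only the inclusion $H\subseteq L^\perp$, and upgrading this to the reverse inclusion $L^\perp\subseteq H$ --- equivalently, showing that the pairing $T\times H_1(Z;\Lambda)\to\Q(t)/\Lambda$ induced by $\beta$ is nondegenerate in the second variable --- is precisely where the nonsingularity of the Blanchfield form and Poincar\'e--Lefschetz duality must be combined.
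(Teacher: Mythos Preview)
Your argument is correct and takes a genuinely different route from the paper.  The paper's proof never uses the Blanchfield form; instead it invokes an external lemma from \cite{Cha-Miller-Powell} asserting that $\ker(j_1|_T)\cong\coker(j_2|_T)$ whenever $H_1(Z,M_K;\Lambda)$ is torsion, and then does bookkeeping with generating ranks of the various pieces of the long exact sequence to bound $\grk\A(K)$ by $2(n+k)$ where $k=\grk\partial(T)$.  You replace that lemma entirely with the duality $L^\perp=H$, which you extract from the diagram in Proposition~\ref{prop:vanishingofbl} by observing that the left-hand vertical composite $\beta\colon TH_2(Z,M_K;\Lambda)\to \Hom_\Lambda(H_1(Z;\Lambda),\Q(t)/\Lambda)$ is in fact an isomorphism (PD restricts to an isomorphism on torsion, $H^1(Z;\Q(t))=0$ so the Bockstein is an isomorphism onto $TH^2(Z;\Lambda)$, and $\kappa$ is an isomorphism since $\Q(t)/\Lambda$ is injective).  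This, together with the fact that $\Q(t)/\Lambda$ separates points of the torsion module $H_1(Z;\Lambda)$, gives exactly the reverse inclusion $L^\perp\subseteq H$ that you flagged as the crux.

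What each approach buys: your route is more conceptual and self-contained --- it does not need the Cha--Miller--Powell lemma --- and it even yields the sharper inequality $\grk L\ge \tfrac12\grk\A(K)-\tfrac{n}{2}$ before you throw away the factor of two.  The paper's route is more elementary in the sense that it stays entirely within generating-rank combinatorics on the long exact sequence and never computes $H^1(Z;\Q(t))$ or invokes the nonsingularity of $\Bl$; its cost is the dependence on an outside result.
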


For the proof of Proposition~\ref{prop:nontrivialitypartial},  we will  need the following result from our article~\cite{Cha-Miller-Powell} with Jae Choon Cha.

\begin{lemma}[{\cite[Lemma~7.5]{Cha-Miller-Powell}}]\label{lem:kercoker}
Let $X$ be a compact, oriented 4-manifold with boundary $\partial X=Y$. Let $S$ be a commutative PID with no zero-divisors,
and suppose there is a representation $\Phi$ of the fundamental group of $Y$ into $\Aut(S)$ that extends over $X$.
Consider the long exact sequence of the pair $(X,Y)$: 
\[ \cdots \to H_2(X;S) \xrightarrow{j_2} H_2(X,Y;S) \xrightarrow{\partial} H_1(Y;S) \xrightarrow{i_1} H_1(X;S) \xrightarrow{j_1} H_1(X,Y;S) \to \cdots\]
If $H_1(X,Y;S)$ is torsion, then $\ker(j_1|_T)$ and $\coker(j_2|_T)$ are isomorphic as $S$-modules.
\end{lemma}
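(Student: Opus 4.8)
The plan is to recognise the statement as an instance of Poincar\'e--Lefschetz duality: after passing to torsion submodules, the universal coefficient theorem over the PID $S$ turns $j_1|_T$ into the algebraic dual of $j_2|_T$, and a short computation with the duality functor $(-)^\vee := \Ext^1_S(-,S)$ then converts ``$\ker$ of the dual'' into ``dual of the $\coker$''.

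First I would set up the torsion reductions. As $j_1$ and $j_2$ are $S$-linear they preserve torsion submodules, so $j_2|_T \colon TH_2(X;S) \to TH_2(X,Y;S)$ makes sense; and the hypothesis that $H_1(X,Y;S)$ is torsion says $H_1(X,Y;S) = TH_1(X,Y;S)$, so the codomain of $j_1|_T$ is already entirely torsion. This last point is where the torsion hypothesis is used, and it is what allows the duality identification to capture all of $j_1|_T$ through Ext terms.

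Next I would apply Poincar\'e--Lefschetz duality, valid since $X$ is oriented and $S$ carries the involution $\ol{\,\cdot\,}$ induced by $\Phi$, in the form $H_k(X;S) \cong \ol{H^{4-k}(X,Y;S)}$ and $H_k(X,Y;S) \cong \ol{H^{4-k}(X;S)}$. Naturality of the cap product with the fundamental class identifies $j_* \colon H_k(X;S) \to H_k(X,Y;S)$ with the conjugate of the natural map $q^* \colon H^{4-k}(X,Y;S) \to H^{4-k}(X;S)$ from the long exact cohomology sequence of the pair. I would then feed in the twisted universal coefficient theorem over $S$, whose torsion part reads $TH^m(X;S) \cong \ol{\Ext^1_S(TH_{m-1}(X;S),S)}$, and similarly for the pair, with naturality identifying $q^*$ on torsion with $\Ext^1_S$ applied to $j_2|_T$. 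The key bookkeeping is that the conjugation introduced by duality and the conjugation introduced by universal coefficients cancel, so that $j_1|_T$ is identified, as a map of $S$-modules, with the dual $(j_2|_T)^\vee \colon \bigl(TH_2(X,Y;S)\bigr)^\vee \to \bigl(TH_2(X;S)\bigr)^\vee$.

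Finally I would run the homological algebra. Over the PID $S$ the functor $(-)^\vee = \Ext^1_S(-,S)$ is a contravariant exact endofunctor on finitely generated torsion $S$-modules with $N^\vee \cong N$; applying exactness to $0 \to \im f \to B \to \coker f \to 0$ for $f := j_2|_T$ yields $\ker(f^\vee) = (\coker f)^\vee$. Combining the previous step with this gives $\ker(j_1|_T) \cong \ker\bigl((j_2|_T)^\vee\bigr) = \bigl(\coker(j_2|_T)\bigr)^\vee \cong \coker(j_2|_T)$, which is the asserted $S$-module isomorphism. I expect the main obstacle to lie precisely in the conjugation bookkeeping through duality and universal coefficients --- confirming that the two conjugations cancel so that the final isomorphism is genuinely bar-free --- together with verifying the naturality squares that match $j_1|_T$ to $(j_2|_T)^\vee$, where the torsion hypothesis on $H_1(X,Y;S)$ is exactly what ensures the target of $j_1|_T$ is fully visible on the Ext side.
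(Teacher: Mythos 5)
Your overall route is the standard one, and (since this paper gives no proof, citing \cite{Cha-Miller-Powell} instead) it is essentially the argument the cited source runs: Poincar\'e--Lefschetz duality plus the universal coefficient sequence identifies $j_1|_T$ with the $\Ext$-dual of $j_2|_T$, and then one uses that $(-)^\vee = \Ext^1_S(-,S)$ is exact on finitely generated torsion modules (since $\Hom_S(\text{torsion},S)=0$ and $S$ has global dimension one) together with $N^\vee \cong N$ to convert $\ker$ of the dual into the $\coker$. The points you treat as routine are indeed fine: although the UCT splitting is unnatural, the image of the $\Ext$ term is exactly the torsion submodule of cohomology (the $\Hom$ term is torsion-free), so the restriction to torsion is natural, and the torsion hypothesis on $H_1(X,Y;S)$ plays the role you assign it.

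The genuine gap is at precisely the point you flagged and deferred: the two conjugations do \emph{not} cancel --- exactly one survives. For the unit-twisted coefficient systems in play here, the composite of duality with evaluation is conjugate-linear; this is the same net bar that makes the Blanchfield pairing sesquilinear rather than bilinear and that forces $\Delta_K(t) \doteq \ol{\Delta_K(t)}$. Run with correct bookkeeping, your argument identifies $j_1|_T$ with $\ol{(j_2|_T)^\vee}$ and yields $\ker(j_1|_T) \cong \ol{\coker(j_2|_T)}$, the \emph{conjugate} module, and the bar is not removable by better naturality: take $K=6_1$, $Y=M_K$, and $X=E_D$ the exterior of the standard ribbon disc, so that $\pi_1(X)$ is the Baumslag--Solitar group $\langle a,b \mid bab^{-1}=a^2\rangle$. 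Then with $\Lambda=\Qt$ coefficients (up to replacing $t$ by $t^{-1}$) one has $H_1(X;\Lambda)\cong \Lambda/(t-2)$ and $H_2(X;\Lambda)=0=H_1(X,Y;\Lambda)$, whence $j_1=0$ and $\ker(j_1|_T)\cong \Lambda/(t-2)$; on the other hand $\partial$ carries $TH_2(X,Y;\Lambda)=H_2(X,Y;\Lambda)$ isomorphically onto $\ker(i_1)\cong \Lambda/(2t-1)$, so $\coker(j_2|_T)\cong \Lambda/(2t-1)\cong \ol{\Lambda/(t-2)}$. Since $t$ acts by $2$ on one module and by $\smfrac{1}{2}$ on the other, these are conjugate but not isomorphic $\Lambda$-modules, so the bar-free conclusion your bookkeeping asserts cannot be what the duality argument proves; the statement should be read up to the involution. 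Note that this discrepancy is invisible in the application (Proposition~\ref{prop:nontrivialitypartial}): the lemma is used only through generating ranks, and $\grk \ol{M} = \grk M$ because the involution is a ring automorphism of $S$.
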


\begin{proof}[Proof of Proposition~\ref{prop:nontrivialitypartial}]
Choose an isomorphism $H_2(Z; \Lambda)\cong \Lambda^n \oplus TH_2(Z; \Lambda)$ and (noting that the free part of $H_2(Z, M_K; \Lambda)$ must also have rank $n$ by duality and universal coefficients) choose an isomorphism $H_2(Z, M_K; \Lambda) \cong \Lambda^n \oplus TH_2(Z,M_K;\Lambda)$.  This allows us to decompose the long exact sequence of the pair $(Z, M_K)$ as follows,  where all homology is taken with coefficients in $\Lambda$:
\[
\begin{tikzcd}
\Lambda^n \arrow[r, "j_2^a"] \arrow[ddr, "j_2^b"] & \Lambda^n \arrow[dr, "\partial^a"] &&&& \\
 \oplus & \oplus & H_1(M_K) \arrow[r,"i_1"] & H_1(Z)  \arrow[r, "j_1"]& H_1(Z, M_K)  \arrow[r] & 0 \\
 TH_2(Z) \arrow[r, "j_2|_T"] &TH_2(Z,  M_K) \arrow[ur, "\partial |_T"].\\
\end{tikzcd}
\]
Since $H_1(M_K;\Lambda)$ and $H_1(Z, M_K;\Lambda)$ are both $\Lambda$-torsion, the former since this holds for all knots, and that latter by assumption, it follows that $H_1(Z;\Lambda)$ must be torsion as well. We will use this later to conclude that $\ker(j_1|_T)= \ker(j_1)$.

Now define $k:=\grk \im(\partial|_T)$, and  let $x_1, \dots, x_k \in TH_2(Z, M_K;\Lambda)$ be elements whose images  $\partial|_T(x_1), \dots, \partial|_T(x_k)$ under $\partial|_T$ generate $\im(\partial|_T)$.
Since $\grk \Lambda^n=n$,  there exist $y_1,\dots, y_n \in TH_2(Z,M_K;\Lambda)$ that generate $\im(j_2^b)$ as a $\Lambda$-module. Let $z_1,\dots, z_{\ell} \in TH_2(Z,M_K;\Lambda)$ generate $\im(j_2|_T)$ as a $\Lambda$-module  for some $\ell \in \mathbb{N}$.

We claim that $x_1, \dots,x_k,  y_1,\dots,y_n, z_1, \dots, z_{\ell}$ generate $TH_2(Z,M_K;\Lambda)$.  Let $a$ be an arbitrary element of $TH_2(Z,M_K;\Lambda)$.  Since $\partial|_T(x_1), \dots, \partial|_T(x_k)$ generate $\im(\partial|_T)$,  there exist $p_1,\dots,p_k \in \Lambda$ such that
\[\partial|_T(a)= \sum_{i=1}^k p_i \partial|_T(p_i x_i)= \partial|_T\Big(\sum_{i=1}^k p_i x_i \Big).\]
Therefore $b:= a- \sum_{i=1}^k p_i x_i$ is an element of $\ker(\partial|_T)$.  Since $b \in TH_2(Z, M_K)$,  we have that $\partial(b)= \partial|_T(b)=0$,  and so $b$ is an element of \[\ker(\partial) \cap TH_2(Z,M_K;\Lambda)= \im(j_2) \cap TH_2(Z,M_K;\Lambda).\]
Now we assert that $\im(j_2) \cap TH_2(Z,M_K;\Lambda) \subseteq \im(j_2^b) + \im(j_2|_T)$.  Assuming this, we can write $b= \sum_{i=1}^n q_i y_i + \sum_{i=1}^{\ell} r_i z_i$ for some $q_i, r_i \in \Lambda$,  thereby establishing that $x_1, \dots,x_k,  y_1,\dots,y_n, z_1, \dots, z_{\ell}$ generate $TH_2(Z,M_K;\Lambda)$.  To complete the proof of this claim we argue that the assertion \[\im(j_2) \cap TH_2(Z,M_K;\Lambda) \subseteq \im(j_2^b) + \im(j_2|_T)\] holds.  Let $b \in \im j_2$, that is $b= j_2(c) = j_2(c_1,c_2)$ for $c_1 \in \Lambda^n$ and $c_2 \in TH_2(Z;\Lambda)$. More precisely, $b =j_2(c_1,c_2) = j_2^a(c_1) + j_2^b(c_1) + j_2|_T(c_2)$. If in addition $b \in TH_2(Z,M_K;\Lambda)$, then $j_2^a(c_1)=0$ and so indeed $b \in \im (j^b_2) + \im (j_2|_T)$.

It follows that the equivalence classes of $x_1, \dots,x_k, y_1,\dots,y_n$ generate $TH_2(Z,M_K;\Lambda)/ \im(j_2|_T)$,  and hence that $\grk \coker(j_2|_T) \leq n+k$.  By Lemma~\ref{lem:kercoker}, using the hypothesis that $H_1(Z,M_K; \Lambda)$ is torsion in order to apply the lemma,  this implies that $\grk \coker(j_2|_T)  = \grk \ker(j_1|_T)$, and therefore we have
\[\grk \ker(j_1) = \grk \ker(j_1|_T) = \grk \coker(j_2|_T) \leq n+k.  \]
For the first equality we used that $H_1(Z;\Lambda) = TH_1(Z;\Lambda)$, as observed above.
Also note that \[\ker(i_1)= \im(\partial) = \im(\partial^a)+ \im(\partial|_T),\]  and so
\[ \grk \ker(i_1) \leq \grk \im(\partial^a)+ \grk \im(\partial|_T) \leq n+k.\]
Now combine  Proposition~\ref{prop:genrank}~\eqref{item:map}, exactness, and the previous two inequalities $\grk \ker(j_1) \leq n+k$ and $\grk \ker(i_1) \leq n+k$, to obtain
\begin{align*}
\grk \A(K) &= \grk H_1(M_K;\Lambda) \leq \grk \ker(i_1)+  \grk \im(i_1)= \grk \ker(i_1)+  \grk \ker(j_1) \\ &\leq (n+k) + (n+k) = 2n+2k.
\end{align*}
We therefore have that \[\grk \im(\partial|_T) = k \geq (\grk \A(K) -2n )/2 = \smfrac{1}{2}\grk \A(K)- n,\] as desired.
\end{proof}

\subsection{Proofs of Theorems \ref{thm:intro-arb-large} and \ref{thm-main-obstruction-intro}}

We are now ready to prove these two theorems.

\begin{proof}[Proof of Theorem~\ref{thm-main-obstruction-intro}]
Recall that  $(K,\tau)$ is a strongly invertible knot and $k$ is by definition the maximal generating rank of any submodule $P$ of $\A(K)$ satisfying
$\Bl_K(x, y)=0=\Bl_K(x, \tau_*(y))$ for all $x,y \in P$.
Now suppose that $K$ bounds a genus $g$ surface $F$ in $D^4$ such that the involution $\tau$ on $S^3$ extends to an involution $\widehat{\tau}$ on $D^4$ such that $\widehat{\tau}(F) = F$.  We wish to show that $g \geq \frac{ \grk \A(K)-2k}{4}$.

Let $Z$ be as in Proposition~\ref{prop:factsaboutZ},  and consider the following portion of the long exact sequence of $(Z, M_K)$ with $\Lambda$-coefficients:
\[ \cdots  \to H_2(Z, M_K; \Lambda) \xrightarrow{\partial} H_1(M_K; \Lambda) \xrightarrow{i_*} H_1(Z; \Lambda) \to \cdots.\]
Define  $Q:= \partial(TH_2(Z,M_K; \Lambda)) \subseteq H_1(M_K; \Lambda)$.

Our first claim is that $\Bl_K(x,y)=0=\Bl_K(x, \tau_*(y))$ for all $x,y \in Q$.  So let $x,y \in Q$ be given.  Since $Q \subseteq \im(\partial|_T) \subseteq \ker(i_*)$,  Proposition \ref{prop:vanishingofbl} implies that $\Bl_K(x,y)=0$.
Additionally,  $y \in \ker(i_*)$  implies that $\tau_*(y) \in \ker(i_*)$ as well by Proposition~\ref{prop:factsaboutZ}~\eqref{item:kernelinvariant}. Thus $\Bl_K(x,\tau_*(y))=0$  too.   We conclude that $\grk Q \leq k$, by definition of~$k$.

By Proposition~\ref{prop:factsaboutZ}~\eqref{item:relh1lambda} and~\eqref{item:h2zrank},  we have that $H_1(Z, M_K; \Lambda)$ is torsion and the free part of $H_2(Z; \Lambda)$ has rank $2g$.
Therefore, Proposition~\ref{prop:nontrivialitypartial} implies that $\grk Q \geq \frac{1}{2} \grk \A(K) - 2g$.
We therefore have that
\[ k \geq \grk Q \geq \smfrac{1}{2} \grk \A(K) - 2g\]
or,  rewriting,
\[ g \geq \smfrac{\grk \A(K) - 2k}{4}. \qedhere \]
\end{proof}

Finally, Theorem~\ref{thm:intro-arb-large} is an immediate consequence of the following slightly stronger result.

\begin{theorem}
Let $J_1, \dots, J_n$  denote genus one strongly invertible knots with pairwise distinct and nontrivial Alexander polynomials.  Pick a strong inversion $\tau_i$ on $J_i$ for each $i=1, \dots, n$ and choose $a_1,\dots, a_n \in \mathbb{N}$.
 Letting $\#^{a_i} (J_i,  \tau_i )$ denote
 the $a_i$-fold connected sum of $(J_i, \tau_i)$,  define
$(J, \tau) := \#_{i=1}^n (\#^{a_i} (J_i,  \tau_i)).$
Then the equivariant 4-genus of $(J, \tau)$ is at least $\smfrac{1}{4}\max(a_1,\dots,a_n)$.
\end{theorem}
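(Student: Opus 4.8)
The plan is to apply the main obstruction Theorem~\ref{thm-main-obstruction-intro} to the pair $(J,\tau)$. Writing $k$ for the maximal generating rank of a submodule $P \leq \A(J)$ on which $\Bl_J(x,y) = 0 = \Bl_J(x,\tau_*(y))$ for all $x,y \in P$, that theorem gives $\wt{g}_4(J,\tau) \geq \tfrac14(\grk \A(J) - 2k)$. So it suffices to prove the two estimates $\grk \A(J) \geq \max_i a_i$ and $k = 0$; the claimed bound then follows immediately.

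For the first estimate, recall from Proposition~\ref{prop:connected-sum-well-behaved} that $\A(J) \cong \bigoplus_{i=1}^n \A(J_i)^{a_i}$ as a module with anti-isometry, the Blanchfield form and $\tau_*$ both respecting the block decomposition. Fixing an index $i_0$ with $a_{i_0} = \max_i a_i$ and applying Proposition~\ref{prop:genrank} to the projection onto that summand, we get $\grk \A(J) \geq \grk \A(J_{i_0})^{a_{i_0}} = a_{i_0}\,\grk \A(J_{i_0}) \geq a_{i_0}$, using that $\A(J_{i_0}) \neq 0$ since $\Delta_{J_{i_0}} \neq 1$. (In fact each genus one module with nontrivial Alexander polynomial is cyclic, so $\grk \A(J_{i_0}) = 1$ and equality holds.)

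The substance of the argument is showing $k = 0$. Exactly as in the proof of Theorem~\ref{theorem:on-ker-F--in-body}, the Alexander polynomials $\Delta_{J_i}$ are distinct degree-two symmetric polynomials with $|\Delta_{J_i}(1)| = 1$, hence pairwise coprime. Consequently, for any $x = \sum_i x_i \in \bigoplus_i \A(J_i)^{a_i}$, Lemma~\ref{lemma:elem-fact-coprime} shows that $\Bl_J(x,x)$ and $\Bl_J(x,\tau_*(x))$ vanish if and only if every block contribution $\Bl_{J_i}(x_i,x_i)$, respectively $\Bl_{J_i}(x_i,\tau_*(x_i))$, vanishes. Taking $y = x$ in the defining condition for $k$, it therefore suffices to prove the per-block statement: for each $i$ and every nonzero $x_i \in \A(J_i)^{a_i}$, at least one of $\Bl_{J_i}(x_i,x_i)$ and $\Bl_{J_i}(x_i,\tau_*(x_i))$ is nonzero. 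If $J_i$ is algebraically slice this is precisely Proposition~\ref{prop:genusone}, which supplies $\Bl_{J_i}(x_i,\tau_*(x_i)) \neq 0$.

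The remaining case, where $J_i$ is not algebraically slice, requires a different argument and is the main obstacle. Here $\Delta_{J_i}$ is an irreducible reciprocal quadratic, so (as in Lemma~\ref{lem:irreducible-poly}) the module $\A(J_i) \cong \Lambda/(\Delta_{J_i})$ is a quadratic field $\mathbb{F}$ over $\Q$, with conjugation $\ol{\,\cdot\,}$ as its nontrivial automorphism and fixed field $\Q$, and $\A(J_i)^{a_i} \cong \mathbb{F}^{a_i}$. Writing $x_i = (q_1,\dots,q_{a_i})$ and using nonsingularity to fix a nonzero value $\beta := \Bl_{J_i}(b,b)$ on a generator, one computes $\Bl_{J_i}(x_i,x_i) = \big(\sum_j q_j\ol{q_j}\big)\beta$ and $\Bl_{J_i}(x_i,\tau_*(x_i)) = \ol{w}\big(\sum_j q_j^2\big)\beta$ for a fixed unit $w$. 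When $\mathbb{F}$ is imaginary quadratic the norm $q \mapsto q\ol{q}$ takes nonnegative rational values vanishing only at $q = 0$, so $\sum_j q_j\ol{q_j} = 0$ forces $x_i = 0$; when $\mathbb{F}$ is real quadratic it embeds in $\R$, so $\sum_j q_j^2 = 0$ forces $x_i = 0$. In either case the per-block statement holds, giving $k = 0$ and hence $\wt{g}_4(J,\tau) \geq \tfrac14 \max_i a_i$. The crux is exactly that the explicit computation behind Proposition~\ref{prop:genusone} relies on the rational factorisation $\Delta_J = (mt-(m+1))((m+1)t-m)$ and does not survive when $\Delta_{J_i}$ is irreducible; one must instead invoke the definiteness of the relevant Hermitian, respectively quadratic, form over the quadratic field $\mathbb{F}$.
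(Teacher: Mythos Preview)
Your proof is correct and follows the paper's overall strategy: apply Theorem~\ref{thm-main-obstruction-intro}, compute $\grk \A(J)=\max_i a_i$ using that the $\Delta_{J_i}$ are pairwise coprime, and show $k=0$ by reducing via Lemma~\ref{lemma:elem-fact-coprime} to a per-block statement about each $\A(J_i)^{a_i}$.

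The genuine difference is in the per-block step. The paper simply invokes Proposition~\ref{prop:genusone} for each $J_i$, but that proposition requires $J_i$ to be \emph{algebraically slice}---a hypothesis present in Theorem~\ref{thm:intro-arb-large} but absent from the statement you were asked to prove. You noticed this and supplied the missing case: when $\Delta_{J_i}$ is irreducible, $\A(J_i)\cong\Lambda/\Delta_{J_i}$ is a quadratic field $\mathbb{F}$, and you exploit that either the Hermitian norm form $q\mapsto q\ol{q}$ (when $\mathbb{F}$ is imaginary quadratic) or the quadratic form $q\mapsto q^2$ (when $\mathbb{F}$ is real quadratic, via an embedding $\mathbb{F}\hookrightarrow\R$) is anisotropic over $\mathbb{F}$. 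Both cases genuinely occur for genus one knots (e.g.\ the trefoil gives $\Q(\sqrt{-3})$, the figure-eight gives $\Q(\sqrt{5})$), so this is not vacuous. Your extra work therefore establishes the theorem exactly as stated; the paper's argument, read literally, only proves the version with ``algebraically slice'' reinstated, and presumably that hypothesis was dropped inadvertently. The paper's route is shorter because Proposition~\ref{prop:genusone} does all the work; yours buys the full generality of the stated theorem.
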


\begin{proof}
First, observe that
\[ \grk \A(J)= \grk \bigoplus_{i=1}^n \A(J_i)^{a_i}
= \grk \bigoplus_{i=1}^n\left( \Q[t^{\pm1}]/ \Delta_{J_i}(t)\right)^{a_i}=\max\{a_1, \dots, a_n\},\]
where the last equality uses the fact that $\Delta_{J_1}(t), \dots, \Delta_{J_n}(t)$ are pairwise distinct,  degree 2,  and symmetric,  hence pairwise relatively prime.

It remains to show that the only element $x \in \A(J)$ with $\Bl_J(x, \tau_*(x))=0$ is the trivial element,  and our result will follow by Theorem~\ref{thm-main-obstruction-intro}.  So write $x=(x_i)_{i=1}^n$,  where each $x_i \in \A(\#^{a_i}J_i)$,  and observe that we can write
$\Bl_{\#^{a_i}J_i}(x_i, (\tau_i)_*(x_i))= \smfrac{p_i(t)}{\Delta_{J_i}(t)}$ for some $p_i(t) \in \Q[t^{\pm1}]$.
So
\[ \Bl_J(x, \tau_*(x))= \sum_{i=1}^n \Bl_{\#^{a_i}J_i}(x_i, (\tau_i)_*(x_i))
= \sum_{i=1}^n \smfrac{p_i(t)}{\Delta_{J_i}(t)}.\]
Since all the $\Delta_{J_i}(t)$ are relatively prime,  this expression is trivial in $\Q(t)/\Q[t^{\pm1}]$  only when $\Bl_{\#^{a_i}J_i}(x_i, (\tau_i)_*(x_i))=\smfrac{p_i(t)}{\Delta_{J_i}(t)}$ vanishes for all $i=1, \dots, n$.  But by Proposition~\ref{prop:genusone} applied to each $J_i$,  this occurs only when $x_i=0$ for all $i=1, \dots, n$, that is when $x=0$.    Therefore $k=0$ in Theorem~\ref{thm-main-obstruction-intro}, so
\[\wt{g}_4(J) \geq \smfrac{\grk \A(J)}{4} = \smfrac{1}{4} \max(a_1,\dots,a_n), \]
as desired.
\end{proof}

\def\MR#1{}
\bibliography{bib}

\end{document}